\def\bC {\mathbf{C}}
\def\bN {\mathbf{N}}
\def\bR {\mathbf{R}}
\def\bT {\mathbf{T}}
\def\bZ {\mathbf{Z}}
\def\fH {\mathfrak{H}}
\def\cA {\mathcal{A}}
\def\cC {\mathcal{C}}
\def\cD {\mathcal{D}}
\def\cH {\mathcal{H}}
\def\cL {\mathcal{L}}
\def\cN {\mathcal{N}}
\def\cP {\mathcal{P}}
\def\cR {\mathcal{R}}
\def\cS {\mathcal{S}}
\def\cW {\mathcal{W}}
\def\a {{\alpha}}
\def\b {{\beta}}
\def\g {{\gamma}}
\def\de {{\delta}}
\def\eps {{\epsilon}}
\def\l {{\lambda}}
\def\si {{\sigma}}
\def\d {{\partial}}
\def\grad {{\nabla}}
\def\Dlt {{\Delta}}
\def\indc {{\bf 1}}
\def\la {\langle}
\def\ra {\rangle}
\def \La {\bigg\langle}
\def \Ra {\bigg\rangle}
\newcommand{\Dom}{\operatorname{Dom}}
\newcommand{\Tr}{\operatorname{trace}}
\newcommand{\Lip}{\operatorname{Lip}}
\newcommand{\MKd}{\operatorname{dist_{MK,2}}}
\newcommand{\Op}{\operatorname{Op}}
\newcommand{\ba}{\begin{aligned}}
\newcommand{\ea}{\end{aligned}}
\newcommand{\be}{\begin{equation}}
\newcommand{\ee}{\end{equation}}
\newcommand{\lb}{\label}
\newcommand{\bvc}{\tau}
\newcommand{\bcr}{ }
\newtheorem{Thm}{Theorem}[section]
\newtheorem{Rmk}[Thm]{Remark}
\newtheorem{Prop}[Thm]{Proposition}
\newtheorem{Cor}[Thm]{Corollary}
\newtheorem{Lem}[Thm]{Lemma}
\newtheorem{Def}[Thm]{Definition}
\newcommand{\ket}[1]{\langle #1|}
\newcommand{\bra}[1]{| #1\rangle}
\newcommand{\hb}{\hbar}
\begin{document}

\title[low regularity semiclassical  evolution]{\LARGE
semiclassical evolution with low regularity 
}
\LARGE
\author[F. Golse]{\Large Fran\c cois Golse}
\address[F.G.]{Ecole polytechnique, CMLS, 91128 Palaiseau Cedex, France}
\email{francois.golse@polytechnique.edu}

\author[T. Paul]{\Large Thierry Paul}
\address[T.P.]{CNRS \& LJLL Sorbonne Université 4 place Jussieu 75005 Paris, France}
\email{thierry.paul@upmc.fr}
\LARGE
\begin{abstract}
We prove semiclassical estimates for the Schr\"odinger-von Neumann  evolution with $C^{1,1}$ potentials and density matrices whose square root have either Wigner functions with low regularity independent of the dimension, or matrix elements between Hermite functions having long range decay. The estimates are settled in  different weak topologies and apply to  initial density operators  whose square root have Wigner functions  $7$ times differentiable, independently of the dimension. They also apply to the $N$ body quantum dynamics uniformly in $N$. In a appendix, we finally  estimate the dependence in the dimension of the constant appearing on the Calderon-Vaillancourt Theorem.
\end{abstract}
%
%
%
%
%
\maketitle
\tableofcontents

\section{Introduction}\label{intro}
The semiclassical approximation links the quantum dynamics of Hamiltonian, say,  $H=-\frac{\hbar^2}2\Delta+V(x)$ on $L^2(\bR^d,dx)$, to the underlying classical one, namely the  flow generated by the Hamiltonian $h(p,q)=\frac{1}2p^2+V(q)$ on $\bR^{2d}\simeq T^{*}\bR^d$. This quite indirect link, particularly efficient   when the Planck constant takes  small values,  relies on the presence of fast oscillations in the initial data whose speed diverges linearly  in $\hbar^{-1}$ as $\hbar\to 0$.

To our knowledge, all results in semiclassical approximations are subjects to the following alternative:
\begin{itemize}
\item 
either no specific ansatz is made for the initial data of the quantum dynamics. In that case, under some tightness conditions of the initial data, revealing the size of oscillations, and along subsequences of values of $\hbar$ tending to $0$, the Wigner function (or equivalently the Husimi one) of the solution of the quantum evolution is shown to tend to a solution 
of the Liouville classical equation, with no estimate of the rate  of convergence provided (see, e.g., \cite{gerard,gerard3,lionspaul}, and also \cite{A,flp} for an extension to potentials whose gradient has only $BV$ regularity)). 

\noindent Let us recall that, by quantum and classical evolution, we mean the content of the following table:

$$
   quantum\ \ \ \  
\left\{
\begin{array}{llr}
i\hbar\partial_t\psi=H\psi,& \psi\in\ L^2({\bR^d},)& \  \ \ \mbox{  \ \ \  Schr\"odinger}\\
\frac d{dt}R=\frac1{i\hbar}[H,R],& R>0,\ \Tr{R}=1,& \mbox{ \ \ \ \ \  \ \ \ von Neumann}
\end{array}
\right.
$$

$$
  classical\ \ \ \  
\left\{
\begin{array}{llr}
\begin{pmatrix}
\dot p\\
\dot q
\end{pmatrix}=
\begin{pmatrix}
-\partial_qh(p,q)\\
\ \ \partial_ph(p,q)
\end{pmatrix},\ & (p,q)\in T^{*}\bR^d& \ \ \mbox{ \ \ \  Hamilton}\\
\frac d{dt}\rho=\{h,\rho\},& \rho>0,\ \int_{\bR^{2d}}\rho=1&\mbox{ Liouville}
\end{array}
\right.
$$
\item
or very precise estimates of the rate of convergence are obtained after some ansatz is made on the initial quantum data (WKB, coherent states for the Schr\"odinger equation; various type of quantization (pseudodifferential, Weyl, T\"oplitz calculus) for the von Neumann equation). In this case a ``quasimode" is constructed, e.g. a solution of an approximate equation, and the unitarity of the quantum flow (supposed a priori) provides a remainder estimate of order $\hbar^{1/2}$ or $\hbar$  in $L^2(\bR^d)$ or $\cL(L^2(\bR^d))$ topology. Let us remark that, although the quasimode is constructed trough the  solution of the equations of the classical paradigm, the estimate of the rate of convergence is settled  in the topology of the quantum one.
\end{itemize}
\vskip 1cm
{
Let us mention also that the accuracy of the results for the von Neumann equation in  the second case of the preceding alternative  degrades quite rapidly when the space dimension or the number of particles involved get large. This for two reasons. First, estimating the remainder is done through the so-called Calderon-Vaillancourt Theorem, a  result  very regularity consuming when the total dimension, namely $d=3N$ in the case of $N$ particle in three dimensions, increases. Basically $C^{[\frac{3N}2]}$-differentiability  for both the potential and the initial data is required. In addition, the $O(\hbar)$ size of the remainder is degraded by the multiplicative constant $\gamma_d$ appearing in Calderon-Vaillancourt Theorem. We give an estimate of $\gamma_d$ in Appendix \ref{cvconst} (Theorem  \ref{cvgamma}), as we couldn't find it elsewhere. Certainly not sharp, the result we obtain is $\log{\gamma_d}\sim 
\frac{11}4 d \log{d}$,  and a value of $\gamma_d$  larger than the inverse of the Planck constant $h={2\pi}\hbar\sim6.626\times 10^{-34}$ already for $N=4$
 in three dimensions}.
\vskip 1cm
Recently we found a way of somehow estimating directly the  ``distance"  between quantum and classical objects. Given $R(t)$ and $\rho(t)$, solutions of the von Neumann and Liouville equations respectively, we defined in \cite{FGPaul} a positive  number $E_\hbar(\rho(t),R(t))$ satisfying for all time $t$ a ``Gronwall type" estimate
\be\label{armat}
E_\hbar(\rho(t),R(t))^2
\leq
e^{\max{(1,4Lip(\nabla V))}t}E_\hbar(\rho(0),R(0))^2,
\ee
where $Lip(\nabla V)$ denotes the Lipschitz constant of $\nabla V$.
\noindent The definition of $E_\hbar$ is given in Definition \ref{arma} below, and is a generalization of a quantum analog of the Wasserstein distance of exponent $2$ introduced in \cite{GMouPaul}.

Let us remark immediately that \eqref{armat} is uniform in $\hbar$ and doesn't contain any extra semiclassical error term as appearing in estimates involving quasimodes. e.g. $O(\hbar^\infty)$. Consistently $E_\hbar(R(0),\rho(0))$ cannot vanish. In fact
\be\label{armaborninf}
E_\hbar\geq \tfrac12d\hbar.
\ee
On the other side, the smallness of $E_\hbar(R,\rho)$ has a true meaning thanks to the fact that it almost dominates a distance between $\rho$ and the Husimi function of $R$: the following bound holds true for any  $R,\rho\geq 0$ with $\Tr{R}=\int_{\bR^{2d}}\rho dpdq=1$, proven in  \cite{FGPaul} Theorem 2.4 2 and recalled in Theorem \ref{armaprop} of the present paper,
\be\label{armahus}
\MKd(\widetilde W[R],\rho)^2\leq E_\hbar(R,\rho)^2+\tfrac12d\hbar.
\ee
The inequality \eqref{armahus} was proven in  \cite{FGPaul} Theorem 2.4 2 and is recalled in Theorem \ref{armaprop} of the present paper,  the definition of the Wasserstein distance $\MKd$  is recalled in \eqref{defmkd} and $\widetilde W[R]$, the Husimi function of $R$,  in \eqref{defhhus}. 

 \noindent Moreover, $E_\hbar$ shares with the notion of distance the following type of triangle inequality, valid for any  $R\geq0,\Tr{R}=1$ and probability measures $f,f'$, see Lemma \ref{armatriang} in Section \ref{semicw},
\be\label{armatriangform}
E_\hbar(f',R_\hbar)\leq \MKd(f',f)+E_\hbar(f,R_\hbar).
\ee

Since the correspondence between $R$ and $\widetilde W[R]$ is one-to-one (see Section \ref{mainr}), formula \eqref{armat} together with \eqref{armaborninf} will give an estimate for the semiclassical
 evolution  of $R(t)$ solution of the von Neumann equation,  estimate expressed this time in the classical paradigm at the contrary of the quasimode ones. More precisely, we will get this estimates if we are able to  find a probability measure $\rho^{in}$ such that
 \be\label{armainitial}
 E_\hbar(\rho^{in},R(0))=o(1)\mbox{ as }\hbar\to 0.
 \ee
 This task was achieved in the case where $R(0)$ is a T\"oplitz operator whose  symbol $\rho$ is any probability measure on $\bR^{2d}$ (see Definition in Section \ref{mainr} below): for all 
 $t\in\bR$, 
 Theorem 2.7 in \cite{FGPaul}  (see also Section \ref{semicw} of the present article) leads to
 \be\label{armatop}
 \MKd(\rho(t),\widetilde W[R(t)])^2
 \leq
 \frac12(1+e^{\max{(1,4Lip(\nabla V)^2)}t})d\hbar,
 \ee
 \noindent Note that this result, being valid for any probability measure type symbol $\rho$ and therefore not requiring any regularity condition, is, at our knowledge, unreachable by usual T\"oplitz calculus proof.
 
 The goal of the present paper is:
 \begin{enumerate}
 \item to get rid of the T\"oplitz ansatz we just mentioned
 \item to find practical efficient 
conditions for  quantum initial data  to be semiclassically evolved by  the von Neumann equation, 
  without any need of concrete ansatz
 \item to obtain propagation estimates of the form \eqref{armatop} for initial 
density matrices, the Wigner function of  the square root of them  
 having low regularity $C^7$, independently of the dimension and the number of particles.
 {
 \item to estimate the semiclassical propagation in some ``direct" weak distances instead of in the Wasserstein-Husimi formulation of  \cite{FGPaul}, namely a  distance $\delta$  between Wigner functions generated by the dual of a space of test functions, or likewise a  distance $d$ between density operators associated to the trace duality of a set of test operators (see definition above in this section).
 \item to get semiclassical estimates  uniform in the number of particles involved.
} 
 \end{enumerate}
\vskip 1cm
Our results will be developed on three nested levels of generality.
The most general one involves the existence of a probability density, linked to the initial condition $R^{in}$ of the von Neumann equation, satisfying explicit conditions. The second one gives a concrete realization of such a density under
 explicit  conditions on the size  of the matrix elements of $\sqrt{R(0)}$ between (semiclassically scaled) Hermite functions. The third one shows that such  conditions are satisfied by  operators whose Wigner function satisfies low regularity conditions. 
\vskip 0.5cm

Each of these results will be expressed in different forms, using the distance $\MKd$ or in terms of the following ones, introduced in \cite{splitting} Appendix B and studied in Appendix \ref{linfty} on the present paper. 

\noindent We recall the definition for two Hilbert-Schmidt (in particular density)  operators $R$ and $S$ and $M>0$.
\begin{eqnarray}
\delta_M(W_{\hbar}[R],W_{\hbar}[S])
&=&\sup_{\substack{
 \max\limits_{|\a|,|\b|\le M}\|\d^\a_x\d^\b_\xi f\|_{L^\infty}\leq 1}}\left|\int (W_{\hbar}[R](x,\xi)-W_{\hbar}[S](x,\xi))f(x,\xi)dxd\xi\right|,\ \nonumber\\
d_M(R,S)&=&\sup_{\substack{
\max\limits_{\substack{|\a|,|\b| \le M}}\|\mathcal D^\a_{-i\hbar\nabla}\mathcal D^\b_{x}F\|_{1}\leq 1}}|\Tr{(F(R-S))}|\hskip 2cm \cD_{\mbox{\LARGE $\bullet$}}:=\tfrac1{i\hbar}[\bullet,\cdot].\nonumber
\end{eqnarray}

The different ``distances" used  in this article are nested in the following chain of inequalities (see Propositions \ref{propnest} and \ref{armapropcor}) valid for $R,S$ density operators
{\large $$
d_{2[d/4]+2}(R,S)\le 2^d\delta_{2[d/4]+2}(W_{\hbar}[R],W_{\hbar}[S])\le\MKd(\widetilde W_{\hbar}[R],\widetilde W_{\hbar}[S])+\frac{2d\gamma_d}{\sqrt\pi}\sqrt{\hbar}\leq E_\hbar(\widetilde W_\hbar[R],S)+(\tfrac{2d\gamma_d}{\sqrt\pi}+\tfrac d2)\sqrt{\hbar}
$$
}
\begin{eqnarray}
d_{2[d/4]+2}(R,S)&\le&2^d\delta_{2[d/4]+2}(W_{\hbar}[R],W_{\hbar}[S])\nonumber\\
\le\MKd(\widetilde W_{\hbar}[R],\widetilde W_{\hbar}[S])+\frac{2d\gamma_d}{\sqrt\pi}\sqrt{\hbar}&\leq& E_\hbar(\widetilde W_\hbar[R],S)+(\tfrac{2d\gamma_d}{\sqrt\pi}+\tfrac d2)\sqrt{\hbar}\nonumber
\end{eqnarray}
where $\g_d$ is the constant that appears in the Calderon-Vaillancourt Theorem \ref{cvgamma} below.


We will use finally a last distance $\delta$  whose definition, independent of the dimension and involving $L^2$ test functions is the following (once again $R$ and $S$ are Hilbert-Schmidt operators). 
$$
 \delta
 (
 W_{\hbar}[R],W_{\hbar}[S)]
 )=
 \sup_{\substack{
Lip(f)\leq 1\\
\| f\|_{L^2(\bR^{2d})},\ \|\nabla f\|_{L^2(\bR^{2d})}\leq 1
}}
|\int (W_\hbar[R]-W_\hbar[S])f(x,\xi)dxd\xi|.
$$
The 
  series of nested links between $\delta, \MKd$ and $E_\hbar$,
this time independent of the dimension, is the following (see Section \ref{ltwo} in Appendix \ref{wassweak} below), for $R,S$ density operators.
{\Large
$$
\delta(\widetilde W_{\hbar}[R],\widetilde W_{\hbar}[S])\le
\MKd(\widetilde W_{\hbar}[R],\widetilde W_{\hbar}[S])+\sqrt\hbar
\|W_\hbar[R-S]\|_{L^2}
\leq\sqrt2E_\hbar(\widetilde W_\hbar[R],S)+\sqrt\hbar
\|W_\hbar[R-S]\|_{L^2}.
$$
}
In our last main result, Theorem \ref{mainN}, the distance $\delta$ will provide a topology for the $N$-body semiclassical propagation of factorized (e.g. bosonic) initial data which will be uniform in $N$ as $N\to\infty$.

\vskip 0.5cm
 The three type of results mentioned earlier are expressed in Section \ref{mainr} by Theorems \ref{main1}, \ref{main2} and \ref{main3} respectively, in  reverse order of nesting for pedagogical purposes. Their proofs are given in Section \ref{proofmains} by showing that they are mostly corollaries of the three items of Theorem \ref{main} proven  in Sections \ref{proofmain3}, \ref{proofmain2} and \ref{proofmain1} respectively. Section \ref{semicw} is devoted to the quantum analogue of the Wasserstein distance of exponent two and its properties.
 Semiclassical uniform in $N$ estimates for the $N$-body quantum propagation are stated and proved in Section \ref{npart} and the three appendices \ref{profftriang}, \ref{wassweak} and \ref{cvconst} are devoted respectively to the proof of the triangle type inequality for $E_\hbar$ just mentioned, the comparison of $E_\hbar$ with weak topologies and an estimation of the Calderon-Vaillancourt constant.
 \vskip 0.3cm
 
 \vskip 1cm
 
\section{Main results}\label{mainr}

For any real function  $V$ of class $C^{1,1}$ on $\bR^d$ such that the operator $-\frac12\hbar^2\Delta+V$ is essentially self-adjoint on $L^2(\bR^d)$, we consider the von Neumann equation
\be\label{vne}
i\hbar\frac{d}{dt} R_\hbar(t)=
[-\frac12\hbar^2\Delta+V,R_\hbar(t)],\ R_\hbar(0)=R^{in}
\ee
where the initial condition $R^{in}$ is a \textit{density matrix}, that is  that $R^{in}$ is positive and $\Tr{R^{in}}=~1$.

Obviously
\be\label{adeux}
R^{in}=A^2,\ \mbox{ $A$ being a positive  Hilbert-Schmidt operator}.
\ee
Let us recall the definition of the Wigner and Husimi transforms of a density operator on $\fH$ (see  e.g. \cite{lionspaul} or \cite{GMouPaul} Appendix B for a short review). If $R$ is the density matrix of  integral kernel $r$, its Wigner transform at scale $\hbar$ is the function on $\bR^d\times\bR^d$ defined by the formula
\be\label{defwwig}
W_\hbar[R](x,\xi):=\frac1{(2\pi)^d}\int_{\bR^d}e^{-i\xi\cdot y}r(x+\tfrac12\hbar y,x-\tfrac12\hbar y)dy\,.
\ee
The Husimi transform of $R$ is
\be\label{defhhus}
\widetilde W_\hbar[R]:=e^{\hbar\Dlt_{x,\xi}/4}W_\hbar[R]
\ee
and we recall that
$$
\widetilde W_\hbar[R]\ge 0\ , 
\int_{\bR^d\times\bR^d}\widetilde W_\hbar[R](x,\xi)dxd\xi=\int_{\bR^d\times\bR^d}W_\hbar[R](x,\xi)dxd\xi=1\,.
$$
In particular, $\widetilde W_\hbar[R]$ is a probability density on $\bR^d\times\bR^d$ for each $R\in\cD(L^2(\bR^d))$.

Moreover, by Remark 2.3. in \cite{FGPaul}, $\widetilde{W_\hbar}{[R]}$ determines $R$ uniquely.
 \vskip 1cm 
 We denote by $\rho(t)$ the solution of the following Liouville equation on $\bR^{2d}$ with initial condition $\widetilde W[R^{in}]$:
 \be\label{liuov}
 \dot\rho=\{\tfrac12\xi^2+V(x),\rho\},\ \ \ \rho(0)=\widetilde W[R^{in}]
 \ee
 We denote also by $\Phi^t$ the Hamiltonian flow of Hamiltonian $\tfrac12p^2+V(q)$ so that
 \be\label{phit}
 \rho(t)=\rho(0)\circ\Phi^{-t}.
 \ee
 Moreover we define
 $$
 \lambda=\frac{1+\max{(4Lip(\nabla V)^2,1)}}2.
 $$
 Finally we define $\mathcal R(t)$ as the operator whose Wigner function is $\rho(t)$:
\be\label{deffnathcalr}
 W_\hbar[\mathcal R(t)]:=\rho(t).
 \ee
%
\vskip 1cm
 In the sequel we will denote 
 \begin{itemize}
 \item by $z=(x,\xi)$ a point in $T^*\bR^d$,
 \item by $\cP(\bR^{2d})$ the set of probability densities on $T^*\bR^d$
 \item by $\cP_2(\bR^{2d})$ the set of probability densities on $T^*\bR^d$ with finite second moments
 \item by $\cD(L^2(\bR^d))$ the set of density matrices on $L^2(\bR^d)$
 \item by $\cD_2(L^2(\bR^d))$ the set of density matrices $R$ on $L^2(\bR^d)$ satisfying
 $$\Tr(-\hbar^2\Delta+x^2)R<\infty.
 $$
 
 \end{itemize}
 \vskip 1cm
\vskip 1cm
For $k=(k_1,\dots,k_d)\in\bZ^d$ we define:
the Fourier coefficients $a_k(z)$ of a function $a(z_1,\dots,z_d)$, $z_i\in T^*\bR$ 
by
\begin{eqnarray}\label{baraka}
 a_{k}(z)&=&
 \int_{\bT^d}a(z_1e^{ik_1\theta_1},\dots,z_de^{ik_d\theta_d})e^{ik\cdot\theta}d\theta:=
\int_{\bT^d}a(e^{i\theta}z)e^{ik\theta}d\theta.
\end{eqnarray}
 
\vskip 1cm

 \textbf{Definition.}\  
 For $K,M\in\bR^+,\ N\in \bN$, we define 
 on $L^1(\bR^{2d})$ 
 the norm
 \be\label{defmkn}
 \|a\|_{M,K,N}:=\sup_{k\in\bZ^d}\sup_{z\in\bR^{2d}}
 \prod_{l=1}^d(|z_l|^2+1)^{M}(|k_l|+d)^{K}
 \sup_{|\alpha_1|,\dots,|\alpha_d|
 \leq N}
 |
\prod_{m=1}^d(\sqrt\hbar D_{z_m})^{\alpha_m} 
 a_k(z)|,
 \ee
{\bcr
\vskip 3cm
}
\begin{Thm}\label{main1}
Let $R(t)$ be the solution of the von Neumann equation \eqref{vne} with initial condition $R^{in}$ and $\mathcal R(t)$ the operator whose Wigner function solves the Liouville equation \eqref{liuov} with initial data the Husimi function of $R^{in}$. Let $\delta$ the distance defined in Definition \ref{defddel}.

{\bcr Let us suppose that
 there exist two functions $\mu(\hbar),\ \nu(\hbar)$ satisfying
 $$
\sqrt\hbar\mu(\hbar)=o(1), \ 
\mu(\hbar)\nu(\hbar)=o(1)\mbox{ as }\hbar\to 0,
 $$
  such that, for some $\epsilon\in]0,1[$,
\begin{enumerate}
\item \label{tightwig}
$\|W_\hbar[\sqrt{R^{in}}]
\|_{
{\color{black} \frac34+\epsilon,\frac72+3\epsilon,3}
}\leq 
(2\pi\hbar)^{-\frac d2}
{\mu(\hbar)}
$

\item\label{semicwig}
$
\hbar^{\frac12}\| W_\hbar[\sqrt{R^{in}}]
\|
_{
\frac34
+\epsilon,
\frac52
+3\epsilon,
4
}
\leq 
(2\pi\hbar)^{-\frac d2}
{\nu(\hbar)}
$
\end{enumerate}
%

Then, 
for all $t\in\bR$,
{
$$
\MKd\big(\widetilde W[R_\hbar(t)],
\rho(t)\big)
\leq {D_{\ref{m1}}}{}e^{\lambda|t|}\max{(\sqrt\hbar,\sqrt\hbar\mu(\hbar),\sqrt{\mu(\hbar)\nu(\hbar)})}.
$$ 
 Moreover, if $ W[R_\hbar^{in}]
\in L^1
(\bR^{2d})$ 
for each $\hbar\in ]0,1[$, we have
\begin{eqnarray}
&&2^{-d}d_{2[d/4]+3}\big(R(t),\mathcal R(t)\big)\leq
\delta_{2[d/4]+3}\big(W_{\hbar}[R(t))],W_{\hbar}[R^{in}]\circ\Phi^{-t}\big)\nonumber\\
&&\leq 
{D_{\ref{m1}}}e^{\lambda|t|}\max{(\sqrt\hbar,\sqrt\hbar\mu(\hbar),\sqrt{\mu(\hbar)\nu(\hbar)})}
+
\sqrt\hbar(\tfrac{2d\gamma_d}{\sqrt\pi}+e^{1+\Lip{(\nabla V)})|t|)}\|W_\hbar[R^{in}]\|_{L^1(\bR^{2d})})
\nonumber
\end{eqnarray}
}
}
Here $D_{\ref{m1}}$ is given in \eqref{dm1}.
\end{Thm}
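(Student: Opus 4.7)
The plan is to deduce Theorem \ref{main1} as a corollary of (the appropriate item of) Theorem \ref{main}, combined with the Gronwall-type estimate \eqref{armat}, the Husimi--Wasserstein inequality \eqref{armahus}, and the chain of nested distance inequalities recalled in the introduction. The whole argument turns on a single initial-time coupling quantity, $E_\hbar(\widetilde W_\hbar[R^{in}], R^{in})$: hypotheses \ref{tightwig} and \ref{semicwig} are the dimension-independent Fourier--Sobolev data which Theorem \ref{main} converts into a bound
\begin{equation*}
E_\hbar(\widetilde W_\hbar[R^{in}], R^{in}) \leq C \max\!\left(\sqrt\hbar,\ \sqrt\hbar\,\mu(\hbar),\ \sqrt{\mu(\hbar)\nu(\hbar)}\right),
\end{equation*}
with the explicit constant eventually called $D_{\ref{m1}}$. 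Heuristically, \ref{tightwig} supplies the phase-space tightness of $R^{in}$ (through the $(|z_l|^2+1)^{3/4+\epsilon}$ weights and the Fourier-mode decay) while \ref{semicwig} controls the semiclassical $\sqrt\hbar\nabla$-derivatives; together they carry just enough information to construct a near-optimal coupling between $R^{in}$ and its Husimi measure, in the sense of the definition of $E_\hbar$.

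\emph{Propagation and the $\MKd$-bound.} Since $\rho(t)$ is by definition the Liouville evolution of $\widetilde W_\hbar[R^{in}]$, \eqref{armat} gives
\begin{equation*}
E_\hbar(\rho(t), R(t)) \leq e^{\lambda|t|}\, E_\hbar(\widetilde W_\hbar[R^{in}], R^{in}),
\end{equation*}
and combining with \eqref{armahus},
\begin{equation*}
\MKd(\widetilde W_\hbar[R(t)], \rho(t))^2 \leq E_\hbar(\rho(t), R(t))^2 + \tfrac12 d\hbar,
\end{equation*}
yields the first inequality of the theorem after absorbing the $\tfrac12 d\hbar$ into the $\sqrt\hbar$ slot of the maximum.

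\emph{The $\delta_M$- and $d_M$-bounds.} For the second pair of inequalities, apply the nested chain
\begin{equation*}
2^{-d}\,d_{2[d/4]+2}(R,S) \leq \delta_{2[d/4]+2}(W_\hbar[R], W_\hbar[S]) \leq \MKd(\widetilde W_\hbar[R], \widetilde W_\hbar[S]) + \tfrac{2d\gamma_d}{\sqrt\pi}\sqrt\hbar
\end{equation*}
with $R = R(t)$ and $S = \mathcal R(t)$, so that $W_\hbar[\mathcal R(t)] = \rho(t) = \widetilde W_\hbar[R^{in}]\circ\Phi^{-t}$. To switch from $\widetilde W_\hbar[R^{in}]\circ\Phi^{-t}$ to the un-smoothed $W_\hbar[R^{in}]\circ\Phi^{-t}$ on the right-hand side, estimate the difference $(\widetilde W_\hbar - W_\hbar)[R^{in}]\circ\Phi^{-t}$ directly via the heat-kernel identity \eqref{defhhus}; after pairing against the $\delta_M$-test functions, this produces the additional term $\sqrt\hbar\,e^{(1+\Lip(\nabla V))|t|}\|W_\hbar[R^{in}]\|_{L^1}$, the exponential factor tracking the Lipschitz growth of $\Phi^{-t}$ against the admissible test functions.

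\emph{Main obstacle.} Everything above is an essentially mechanical assembly once the initial $E_\hbar$-bound is in hand, so the real content lies in the first step, which is delegated to Theorem \ref{main}. What makes that step difficult, and motivates the paper as a whole, is the need to convert the weighted Fourier--Sobolev norms $\|W_\hbar[\sqrt{R^{in}}]\|_{M,K,N}$ at the modest regularity $N=3,4$ \emph{independent of $d$} into an optimal-transport-type coupling estimate. A naive approach via Calderon--Vaillancourt would instead require $C^{[d/2]}$-regularity together with the huge multiplicative constant $\gamma_d$ of Theorem \ref{cvgamma}; it is the Fourier-mode-by-mode structure of the norm \eqref{defmkn} that permits dimension-independence, and this is where the substantive semiclassical analysis of Theorem \ref{main} is concentrated.
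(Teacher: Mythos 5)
Your proposal is correct and follows essentially the same route as the paper: the first inequality is Theorem \ref{main} item (I) combined with Corollary \ref{armaarmacor} (i.e.\ the Gronwall estimate of Theorem \ref{armaarma} plus Theorem \ref{armaprop}(3)), and the second pair of inequalities is Proposition \ref{propnest} together with Theorem \ref{ineqphinf}, whose proof is exactly your Husimi-to-Wigner correction with the Calderon--Vaillancourt term on the $R(t)$ side and the Lipschitz-flow $L^1$ term on the $R^{in}\circ\Phi^{-t}$ side. The only point to watch is that one must compare with the probability measure $\widetilde W_\hbar[R^{in}]\circ\Phi^{-t}$ rather than with $\widetilde W_\hbar[\mathcal R(t)]$ (which is not a probability density since $\mathcal R(t)$ is not positive), which is precisely how Theorem \ref{ineqphinf} is formulated.
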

\vskip 1cm

Using \eqref{mkncn} and taking $\mu(\hbar)=1,\ \nu(\hbar)=\hbar^{\frac12}$ so that $\sqrt{\mu(\hbar)\nu(\hbar)}=\hbar^{\frac14}$, we derive easily the following (maybe more tractable) corollary.

\textbf{Corollary} \textit{
Let $R_\hbar$ be a family of density matrices of $L(\bR^d)$. 
Let us assume that the Wigner function  $W_\hbar[\sqrt{R^{in}}](x,\xi)$ of $\sqrt{R^{in}}$ has derivatives up to order $
\tfrac{13}2+3\epsilon
$ 
bounded by
$C(2\pi\hbar)^{-\frac d2}((\xi^2+x^2)+d)^{
-\frac{10}4-4\epsilon
}, C>0$, i.e.
$$
\sup_{\substack{|\beta_1|,\dots,|\beta_d|\leq 7}}
 |\prod_{m=1}^dD_{(x,\xi)}^{\beta_m}   W_\hbar[\sqrt{R^{in}}](x,\xi)| 
 \leq \frac {C(2\pi\hbar)^{-\frac d2}}{((\xi^2+x^2)^2+d)^{\frac{10}4+3\epsilon}}\hfil\ \ \  \forall (x,\xi)\in\bR^{2d}.
$$
Then there exist $D,D'$ such that,
for all $t\in\bR$,
$$
\MKd\big(\widetilde W[R_\hbar(t)],
\rho(t)
\big)
\leq 
{D}\hbar^{\frac14} e^{\lambda|t|}.
$$
Moreover, if, in addition, 
$ \|W_\hbar[R^{in}]
\|_{L^1
(\bR^{2d})}\leq C'<\infty$,
 for each $\hbar\in [0,1]$, 
we have
$$
\delta_{2[d/4]+3}\big(W_{\hbar}[R(t))],W_{\hbar}[R^{in}]\circ\Phi^{-t}\big)
\leq D'\hbar^{\frac14} e^{2\lambda|t|}.
$$
}
\vskip 1cm
Let us make a few remarks concerning Theorem \ref{main1}.

 \vskip 0.3cm
 The factor 
 $(2\pi\hbar)^{-\frac d2}$ 
 in Theorem \ref{main1} and its corollary might seem strange, but it is in fact natural, if we think that the Wigner function of 
 $(\sqrt{R^{in}})^2$ has to be the one of $R^{in}$, a density operator.
 
 \noindent Indeed if we think that the Wigner function of an operator is the quotient of its Weyl symbol by $(2\pi\hbar)^d$, we have, using the Moyal product $\star_{Moyal}$, that
 $$
 (2\pi\hbar)^dW_\hbar[R^{in}]=(2\pi\hbar)^dW_\hbar[\sqrt{R^{in}}]\star_{Moyal}(2\pi\hbar)^dW_\hbar[\sqrt{R^{in}}
 $$ so that
 $$
 W_\hbar[R^{in}]=((2\pi\hbar)^{\frac d2}W_\hbar[\sqrt{R^{in}}])\star_{Moyal}((2\pi\hbar)^{\frac d2}W_\hbar[\sqrt{R^{in}}).
 $$ 
 
In order to directly implement semiclassical approximation in weak sense, other than the results presented in the introduction obtained by compactness methods without rate of convergence, one should work with Wigner or Husimi functions. But then one faces the difficulty  of the non positiveness of the Wigner function. Of course, one can cure this default  by  using instead the Husimi function, but the Husimi function follows an evolution equation involving analyticity regularity (see \cite{AthaPaul}).

 Therefore it seems to us that the only way of obtaining precise weak semiclassical results by ``standard" methods consists in constructing quasimodes in strong topology, a way obviously very regularity consuming.
On the contrary, the results in the present paper uses weak topology from the beginning, and allows us to obtain results requiring   little regularity independent of the dimension. 

Let us mention finally that we studied in \cite{FGPaulCohSta}, section $5$, the pertinence of weak versus strong topologies concerning the transition between quantum and classical paradigms.
\vskip 1cm
Actually the content of Theorem \ref{main1} is a particular case of the following more general result.

Let us define the Hermite orthonormal basis of $L^2(\bR^d)$ as $\{H_j,j\in\bN^d\}$, i.e., for $j=(j_1,\dots,j_d)$, 
\be\label{defher}
H_j=h_{j_1}\otimes\dots\otimes h_{j_d}\mbox{ where }(-\hbar^2\frac{d^2}{dx^2}+x^2)h_k=(2k+1)h_k,\ ||h_k||_{L^2(\bR)}=1.
\ee

We will denote by $(\cdot,\cdot)$ the scalar product in $L^2(\bR^d)$.
\vskip 1cm

\begin{Thm}\label{main2}
{ Let us suppose that
 there exist two functions $\mu'(\hbar),\ \nu'(\hbar)$ satisfying
 $$
\sqrt\hbar\mu'(\hbar)=o(1), \ 
\mu'(\hbar)\nu'(\hbar)=o(1)\mbox{ as }\hbar\to 0,
 $$
  such that, for some $\epsilon\in]0,1[$,
\begin{enumerate}
\item \label{iaj}
$|(H_i,
\sqrt{R^{in}}
 H_j)|\leq (2\pi\hbar)^{\frac d2} \mu'(\hbar)
 {\prod\limits_{1\leq l\leq d}|\hbar j_l+\frac12|^{-
\frac34
-\epsilon}(|i_l-j_l|+1)^{-2-\epsilon}}$,

\item\label{ioaj}
$\sup\limits_{O\in\Omega_1}{|(H_i,[O,
\sqrt{R^{in}}
] H_j)|}\leq{(2\pi\hbar)^{\frac d2}\nu'(\hbar)}
{\prod\limits_{1\leq l\leq d}|\hbar j_l+\frac12|^{-
\frac12
-\epsilon}(|i_l-j_l|+1)^{-1-\epsilon}}$,

\ where $\Omega_1=\{y_j,
\pm\hbar
\partial_{y_j}\ on\ L^2(\bR^d,dy),\ j=1,\dots,d\}$.
\end{enumerate}
}

Then, 
for all $t\in\bR$,
{
$$
\MKd\big(\widetilde W[R_\hbar(t)],
\rho(t)\big)
\leq {D_{\ref{m2}}}{}e^{\lambda|t|}\max{(\sqrt\hbar,\sqrt\hbar\mu'(\hbar),\sqrt{\mu'(\hbar)\nu'(\hbar)})}.
$$ 
 Moreover, if $ W[R_\hbar^{in}]
\in L^1
(\bR^{2d})$ 
for each $\hbar\in ]0,1[$, we have

\begin{eqnarray}
&&2^{-d}d_{2[d/4]+3}\big(R(t),\mathcal R(t)\big)\leq
\delta_{2[d/4]+3}\big(W_{\hbar}[R(t))],W_{\hbar}[R^{in}]\circ\Phi^{-t}\big)\nonumber\\
&&\leq
{D_{\ref{m2}}}e^{\lambda|t|}\max{(\sqrt\hbar,\sqrt\hbar\mu'(\hbar),\sqrt{\mu'(\hbar)\nu'(\hbar)})}.
+
\sqrt\hbar(\tfrac{2d\gamma_d}{\sqrt\pi}+e^{1+\Lip{(\nabla V)})|t|)}\|W_\hbar[R^{in}]\|_{L^1(\bR^{2d})})
\nonumber
\end{eqnarray}
Here $D_{\ref{m2}}$ is given in \eqref{dm2}.
}

%
%

\end{Thm}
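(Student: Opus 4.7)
The plan is to derive Theorem \ref{main2} as a corollary of a general $E_\hbar$-control result (the announced Theorem \ref{main}) by exhibiting a probability density $\rho^{in}$ for which the matrix element hypotheses (1)-(2) yield
\[
E_\hbar(\rho^{in}, R^{in}) \leq D \max\bigl(\sqrt\hbar,\, \sqrt\hbar\,\mu'(\hbar),\, \sqrt{\mu'(\hbar)\nu'(\hbar)}\bigr).
\]
The rest of the argument consists of propagating this initial bound via \eqref{armat} and converting it to the various distances through the nested chain recalled in the introduction.

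First, I choose $\rho^{in}$ to be a Husimi-type probability density on $T^*\mathbb R^d$ associated to $\sqrt{R^{in}}$ via coherent states (for instance $\widetilde W_\hbar[R^{in}]$). By the definition of $E_\hbar$ recalled in Section \ref{semicw}, the quantity to control is a quadratic functional built from a quantum-classical coupling with an integrand of "squared phase-space displacement" type; expanding $\sqrt{R^{in}}$ in the Hermite basis $\{H_j\}_{j\in\mathbb N^d}$ turns it into an explicit sum indexed by $(i,j,i',j')$ of products $(H_i,\sqrt{R^{in}}H_j)\,\overline{(H_{i'},\sqrt{R^{in}}H_{j'})}$ multiplied by matrix elements of position and momentum operators in the same basis.

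Second, and this is the central analytic step, the sum splits into a \emph{diagonal} piece, in which one uses only $\sqrt{R^{in}}$ itself and hypothesis (1), and an \emph{off-diagonal} piece, in which the action of a position or momentum operator on $\sqrt{R^{in}}$ produces a commutator whose matrix elements are exactly those controlled by hypothesis (2). In the diagonal piece the weight $\prod_l |\hbar j_l + 1/2|^{-3/4-\epsilon}(|i_l-j_l|+1)^{-2-\epsilon}$ from (1) combines with the factors $\sqrt{\hbar j_l}$ arising from $(H_j, y_l H_{j'})$ to give sums convergent uniformly in $\hbar$ and in $d$, yielding the $\sqrt\hbar\,\mu'(\hbar)$ contribution. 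In the off-diagonal piece, a Cauchy-Schwarz bound applied to the commutator matrix elements, using the weaker weights $|\hbar j_l + 1/2|^{-1/2-\epsilon}(|i_l-j_l|+1)^{-1-\epsilon}$ of (2), produces the $\sqrt{\mu'(\hbar)\nu'(\hbar)}$ cross term. The baseline $\sqrt\hbar$ is the irreducible quantum floor \eqref{armaborninf}. The calibration of the exponents is designed precisely so that each coordinate-wise sum over $(i_l,j_l)$ is of order one, so that the $d$-fold product remains bounded independently of the dimension.

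Third, propagation via \eqref{armat} gives $E_\hbar(\rho(t),R(t)) \leq e^{\lambda|t|} E_\hbar(\rho^{in},R^{in})$, and combination with \eqref{armahus} yields the first stated $\MKd$ inequality. For the second inequality, the nested chain $d_M \leq 2^d \delta_M \leq \MKd(\widetilde W_\hbar[R],\widetilde W_\hbar[S]) + 2d\gamma_d\sqrt\hbar/\sqrt\pi$ recalled in the introduction handles the conversion to $\delta_{2[d/4]+3}$ and $d_{2[d/4]+3}$, while applying it to the pair $(R(t), \mathcal R(t))$ with $\mathcal R(t)$ the operator whose Wigner function is $W_\hbar[R^{in}]\circ\Phi^{-t}$ produces an additional Calderon-Vaillancourt-type remainder between the von Neumann and Liouville evolutions on Wigner functions of order $\sqrt\hbar$ times the $L^1$-norm of $W_\hbar[R(t)]$, which grows at most like $e^{(1+\Lip(\nabla V))|t|}\|W_\hbar[R^{in}]\|_{L^1}$. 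The main obstacle is the second step: verifying that the two-scale decay exponents in (1) and (2) are exactly tuned so that, after the $d$-fold product, the coordinate-wise sums yield a total bound independent of the dimension. The loss of a factor $|\hbar j_l|^{1/4}(|i_l-j_l|+1)$ between (1) and (2) is precisely the factor that a position or momentum matrix element extracts from $\sqrt{R^{in}}$ via the commutator, and tracking this through the Wasserstein-type coupling defining $E_\hbar$ is the core calculation.
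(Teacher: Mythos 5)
Your overall architecture (bound $E_\hbar$ at $t=0$, propagate by \eqref{armat}, convert via the nested distance chain and the Calderon--Vaillancourt remainder) matches the paper's, and your reading of which hypothesis feeds which term is roughly right. But the central analytic step is asserted rather than carried out, and the missing ingredient is exactly the one the paper's proof is built around. Since $E_\hbar$ is an \emph{infimum} over couplings, to get an upper bound you must exhibit a concrete coupling $Q(x,\xi)$ with the correct marginals; until you do, there is no ``quadratic functional indexed by $(i,j,i',j')$'' to expand in the Hermite basis. The paper's construction is $Q(x,\xi)=A\,T_{F}^{-1/2}\,F(x,\xi)|x,\xi\rangle\langle x,\xi|\,T_{F}^{-1/2}A$ with $A=\sqrt{R^{in}}$ and $F=f^{\otimes d}$ a radial reference density, chosen so that $T_F=\Op^T_\hbar((2\pi\hbar)^dF)$ is \emph{diagonal} on the Hermite basis with eigenvalues $G((j+\tfrac12)\hbar)\gtrsim((j+\tfrac12)\hbar+1)^{-\nu}$, $\nu=\tfrac32+2\epsilon$. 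This is what makes three things work at once: (i) $c_\hbar Q_0=\tfrac{d\hbar}2 Q_0$ for the T\"oplitz coupling $Q_0$, so the cost reduces to $\tfrac{d\hbar}2$ plus commutator terms; (ii) the weights $|\hbar j_l+\tfrac12|^{-3/4-\epsilon}$ in hypothesis (1) exactly absorb the growth $G^{-1/2}\lesssim((j+\tfrac12)\hbar+1)^{\nu/2}$, so that a Schur test on row/column sums of matrix elements converts (1) and (2) into the operator-norm bounds $\|A T_F^{-1/2}\|\lesssim\mu'$ and $\|[O,AT_F^{-1/2}]\|\lesssim\sup(\hbar\mu',\nu')$; and (iii) the product structure $\tau=\rho\tau'\sim\sup(\hbar\mu'^2,\mu'\nu')$ then yields $\max(\sqrt\hbar,\sqrt\hbar\mu',\sqrt{\mu'\nu'})$ -- not a Cauchy--Schwarz cross term as you describe. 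A key technical point you would have to supply is the bound on $G((j+\tfrac12)\hbar)^{-1/2}-G((j+\tfrac32)\hbar)^{-1/2}$ (of order $\hbar$ times a power of $j\hbar$), which is where the factor $\hbar$ multiplying $\mu'$ actually comes from.

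A second, smaller gap: you propose to couple $\widetilde W_\hbar[R^{in}]$ with $R^{in}$ directly, but the coupling above has classical marginal $f_A(x,\xi)=F(x,\xi)\langle x,\xi|T_F^{-1/2}R^{in}T_F^{-1/2}|x,\xi\rangle$, which is \emph{not} the Husimi function. Passing from $E_\hbar(f_A,R^{in})$ to $E_\hbar(\widetilde W_\hbar[R^{in}],R^{in})$ requires the triangle-type inequality $E_\hbar(f',R)\le\MKd(f',f)+E_\hbar(f,R)$ combined with $\MKd(\widetilde W_\hbar[R],f)^2\le E_\hbar(f,R)^2+\tfrac{d\hbar}2$, i.e.\ Lemma \ref{verstriang}; this step is absent from your outline and cannot be bypassed, since no direct coupling of the Husimi function with $R^{in}$ is available under the stated hypotheses.
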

\noindent\textbf{Remark}

\noindent\textit{Here also the factor $(2\pi\hbar)^d$ is natural. For example, thanks to it, estimate (\ref{tight}) gives that $\Tr R^{in}$ is finite.
}
\vskip 1cm
Once again, Theorem \ref{main2} is also a particular case of a more general one. Let us first set up the following definition (see \cite{GMouPaul}, Appendix B for further details).
\vskip 1cm

\noindent\textbf{Definition}\ \textit{[T\"oplitz operators]
For each $z=x+i\xi\in\bC^{d}$, we denote 
\be\label{defetc}
|z,\hbar\ra:\,y\mapsto(\pi\hbar)^{-d/4}e^{-|y-x|^2/2\hbar}e^{i\xi\cdot(y-x)/\hbar}\,\ \ \ \| |z,\hbar\ra\|_{L^2(\bR^d)}=1,
\ee
and we designate by $|z,\hbar\ra\la z,\hbar|$ the orthogonal projection on the line $\bC|z,\hbar\ra$ in $\fH$. 
For each Borel probability density $\mu$ on $\bR^d\times\bR^d$, we define the T\"oplitz operator of symbol $\mu$ as the operator defined weakly on $L^2(\bR^d)$ by the formula
$$
\Op^T_\hbar(\mu):=\frac1{(2\pi\hbar)^d}\int_{\bR^d\times\bR^d}|x+i\xi,\hbar\ra\la x+i\xi,\hbar|\mu(x,\xi)dxd\xi.
$$
}
One easily shows that
\be\label{decomp}
\Tr{\Op^T_\hbar({\color{black}(2\pi\hbar)^d}\mu)}=1
\ee
so that $\Op^T_\hbar({\color{black}(2\pi\hbar)^d}\mu)$ is 
trace class
and admits an $L^2$ spectral decomposition
\be\label{psecdecomp}
{\Op^T_\hbar({\color{black}(2\pi\hbar)^d}\mu)}=\sum_{i\in\bN}\mu_i|\psi_i\ra\la\psi_i|,\ \ \ \psi_i\in L^2(\bR^d).
\ee
\vskip 1cm
When $F>0$, 
$\Op^T_\hbar({\color{black}(2\pi\hbar)^d}F)$ is 
injective, as any element $\psi$ of its kernel would satisfy $\langle x,\xi|\psi\rangle=0$ for almost all $(x,\xi)\in\bR^{2d}$ which would imply $\psi=0$ by completeness of discrete families of coherent states (see \cite{bgz}). So $\Op^T_\hbar({\color{black}(2\pi\hbar)^d}\mu]^{-1/2}$ exists as a unbounded operator on $L^2(\bR^d)$ as defined by the spectral theorem
\be\label{defmoinsundemi}
\Op^T_\hbar({\color{black}(2\pi\hbar)^d}F)^{-1/2}=\sum\limits_{i\in\bN}\mu_i^{-1/2}|\psi_i\ra\la\psi_i|
\ee
on the domain $\mathcal D=\{\psi\in L^2(\bR^d),\ \sum\limits_{i\in\bN}\mu_i^{-1}|\la\psi_i|\psi\ra|^2<\infty\}.$
\begin{Thm}\label{main3}
Let $R_\hbar^{in}=R^{in}$ be a family of density  matrices on $L^2(\bR^d)$ satisfying the following hypothesis
\begin{enumerate}
\item\label{tight}
(tightness)

\noindent there exists a probability density $F>0$ on $\bR^{2d}$ 
 satisfying $\int(|x|+|\xi|)F(dx,d\xi)<~\infty$ 
such that, for each $\hbar\in(0,1]$,
$$
\Op^T_\hbar({\color{black}(2\pi\hbar)^d}F)^{-1/2}\sqrt{R^{in}},\ \sqrt{R^{in}}\Op^T_\hbar({\color{black}(2\pi\hbar)^d}F)^{-1/2}\mbox{ have   bounded extensions to  $L^2(\bR^d)$}
$$
(we will denote by the same symbols the operators and their bounded extensions).
\item\label{semic}
(semiclassical hypothesis)\ 

\noindent there exists $
\tau(\hbar)=o(1)\mbox{ as }\hbar\to 0$ such that
$$
\sup_{O\in\Omega}\big\|
\Op^T_\hbar({\color{black}(2\pi\hbar)^d}F)^{-1/2}\sqrt{R^{in}}\big[O,\sqrt{R^{in}}\Op^T_\hbar({\color{black}(2\pi\hbar)^d}F)^{-1/2}\big]
\big\|
=\tau(\hbar),
$$
\ where $\Omega=\{-\hbar^2\Delta_y+|y|^2,y_j,
\pm\hbar
\partial_{y_j}
{\bcr \ on\ L^2(\bR^d,dy)},\ j=1,\dots,d\}$.
\end{enumerate}

 Let $R_\hbar(t)$ the solution of the 
von Neumann equation \eqref{vne} and $\rho(t)$ the solution of the  Liouville equation \eqref{liuov} on $\bR^{2d}$ with initial condition $\widetilde W[R^{in}]$. 
 
\vskip 0.5cm 
Then, for all $t\in\bR$,
{
$$
E_\hbar(\rho(t),R(t))\leq 
\frac{D_{\ref{m3}}}{\sqrt2}e^{\lambda|t|}\max{(\sqrt\hbar,\sqrt{\tau(\hbar)})}
$$ 
and therefore
$$
\MKd(\widetilde W_\hbar[R(t)],\rho(t))
\leq
D_{\ref{m3}}
e^{\lambda|t|}
\max{(\sqrt\hbar,\sqrt{\tau(\hbar)})},
$$
}
with   $D_{\ref{m3}}$ given by \eqref{dm3}.
\small
\end{Thm}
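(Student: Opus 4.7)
The Gr\"onwall-type inequality \eqref{armat} reduces both conclusions of Theorem \ref{main3} to a single estimate at the initial time: $E_\hbar(\widetilde W[R^{in}], R^{in}) = O(\max(\sqrt\hbar, \sqrt{\tau(\hbar)}))$. Indeed, the $E_\hbar$ statement then propagates directly by \eqref{armat}, and the $\MKd$ statement follows by applying \eqref{armahus} at time $t$ to the propagated bound.

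To establish this initial estimate, I would exploit the Toeplitz structure latent in the tightness hypothesis. Setting $S := \sqrt{R^{in}}\,\Op^T_\hbar((2\pi\hbar)^d F)^{-1/2}$, hypothesis (\ref{tight}) guarantees that $S$ and $S^*$ extend boundedly to $L^2(\bR^d)$, and the defining formula for $\Op^T_\hbar$ yields the identity
$$R^{in} \;=\; S\,\Op^T_\hbar((2\pi\hbar)^d F)\,S^* \;=\; \int_{\bR^{2d}} F(z)\, S|z,\hbar\rangle\langle z,\hbar|S^*\, dz,$$
expressing $R^{in}$ as a continuous $F$-weighted superposition of (sub-normalized) rank-one operators. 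The operator-valued measure $Q(dz) := F(z)\, S|z,\hbar\rangle\langle z,\hbar|S^*\, dz$ thus has $R^{in}$ as operator marginal and $\widetilde F(z) := F(z)\|S|z,\hbar\rangle\|^2$ as phase-space trace marginal, providing a natural candidate coupling in the sense of $E_\hbar(\widetilde F, R^{in})$.

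Evaluating the cost of $Q$ amounts to controlling, for each $O$ in the list $\Omega$ and its corresponding classical symbol $o(z)$, integrals of the form $\int F(z)\,\langle z,\hbar|S^*(O - o(z))^2 S|z,\hbar\rangle\,dz$. The algebraic identity $(O - o(z))S = S(O-o(z)) + [O,S]$ splits each such integrand into a main piece bounded by $\|S\|^2$ times the coherent-state expectation $\langle z,\hbar|(O-o(z))^2|z,\hbar\rangle$ (which is $O(\hbar)$ after integration against $F$, using the finite first moment of $F$ for the harmonic-oscillator summand), two cross terms controlled by Cauchy--Schwarz together with the uniform operator-norm bound $\|S^*[O,S]\| \le \tau(\hbar)$ from hypothesis (\ref{semic}), and a residual $\|[O,S]|z,\hbar\rangle\|^2$ handled after integration against $F$ by recasting it as a trace against $\Op^T_\hbar((2\pi\hbar)^d F)$ and again invoking (\ref{semic}) via cyclicity. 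Summing produces $E_\hbar(\widetilde F, R^{in}) = O(\sqrt{\max(\hbar,\tau(\hbar))})$.

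The final step is to pass from the marginal $\widetilde F$ to $\widetilde W[R^{in}]$: this is done via the triangle inequality \eqref{armatriangform} combined with \eqref{armahus}, costing an additional $O(\sqrt\hbar)$. Propagating by \eqref{armat} and invoking \eqref{armahus} at the final time completes both displayed estimates. The main technical obstacle, in my view, is controlling the residual commutator square $\|[O,S]|z,\hbar\rangle\|^2$: since hypothesis (\ref{semic}) bounds only the composite $S^*[O,S]$ and not $[O,S]$ in isolation, a direct pointwise bound is unavailable, and one must carefully reorganize the integrated residual so that every commutator appears pre-multiplied by $S^*$ before any scalar estimate is extracted; the inclusion of the harmonic oscillator $-\hbar^2\Delta + |y|^2$ in $\Omega$ seems to be specifically designed to provide a single global quadratic estimate at this step, rather than requiring independent position and momentum bounds.
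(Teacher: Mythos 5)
Your overall architecture coincides with the paper's: the reduction to a $t=0$ estimate via the Gr\"onwall inequality, the coupling $Q(dz)=F(z)\,S|z,\hbar\rangle\langle z,\hbar|S^*\,dz$ with $S=\sqrt{R^{in}}\,\Op^T_\hbar((2\pi\hbar)^dF)^{-1/2}$ (this is exactly $Q=AT_{F}^{-1/2}Q_0T_{F}^{-1/2}A$ of Section \ref{proofmain3}, up to the replacement of $F$ by $F_\epsilon=(\epsilon(x^2+\xi^2)+1)^{-1}F$, which you should not omit since the triangle inequality for $E_\hbar$ requires finite second moments), and the endgame via Theorem \ref{armatriang}, Theorem \ref{armaprop}(3) and propagation are all as in the paper.

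The gap is in the cost estimate, and it is exactly at the point you flag as "the main technical obstacle" — but the obstacle affects both of your terms, not just the residual. Your symmetric expansion $(O-o(z))S=S(O-o(z))+[O,S]$ yields a main term $\langle z|(O-o)S^*S(O-o)|z\rangle\le\|S\|^2\,\langle z|(O-o)^2|z\rangle$ and a residual $\|[O,S]\,|z,\hbar\rangle\|^2$, and neither is controlled by the hypotheses. Hypothesis (\ref{tight}) asserts boundedness of $S$ only for each fixed $\hbar$; the Remark following the theorem makes explicit that uniformity in $\hbar$ is \emph{not} assumed, so a bound $\|S\|^2\cdot O(\hbar)$ proves nothing. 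As for the residual, no use of cyclicity converts $\int F(z)\|[O,S]|z\rangle\|^2dz=\Tr\big([O,S]^*[O,S]\,\Op^T_\hbar((2\pi\hbar)^dF)\big)$ into an expression in which the commutator is pre-multiplied by $S^*$: hypothesis (\ref{semic}) bounds the composite $S^*[O,S]$, not $[O,S]$ alone, and the $S^*$ cannot be manufactured. The paper kills both terms simultaneously by \emph{not} expanding the square. Since $|z,\hbar\rangle$ is the ground state of the shifted oscillator $c_\hbar(x,\xi)$, one has the exact identity $c_\hbar(x,\xi)Q_0(x,\xi)=\tfrac{d\hbar}2Q_0(x,\xi)$ (Lemma \ref{lemcost}); writing $\Tr(c_\hbar\,SQ_0S^*)=\Tr(S\,c_\hbar Q_0\,S^*)+\Tr(S^*[c_\hbar,S]\,Q_0)$ (one commutator, one use of cyclicity), the first term integrates to $\tfrac{d\hbar}2\,\Tr R^{in}=\tfrac{d\hbar}2$ with no factor $\|S\|^2$, and the single commutator $[c_\hbar,S]$ expands into commutators with the elements of $\Omega$ with scalar coefficients of size $O(1+|x|+|\xi|)$, each already sandwiched as $S^*[O,S]$, giving the bound $\tau(\hbar)\int(1+|x|+|\xi|)F_\epsilon$. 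This is the reorganization you were looking for; without it your argument does not close under the stated hypotheses.
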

\vskip 0.5cm
\begin{Rmk}\

\noindent

\noindent 
Hypothesis \eqref{tight} is not empty as $R_\hbar:=\Op^T_\hbar({\color{black}(2\pi\hbar)^d}F)$ satisfies obviously \eqref{tight}. 

\noindent When the tightness bound in \eqref{tight} is uniform in $\hbar$, \eqref{semic} can be replaced by

\noindent $
\sup\limits_{O\in\Omega}\big\|
\big[O,R_\hbar^{1/2}\Op^T_\hbar({\color{black}(2\pi\hbar)^d}F)^{-1/2}\big]
\big\|
=o(1)$.
\end{Rmk}
\vskip 0.5cm
\begin{Rmk}\label{rm2}\ 

\noindent The condition \eqref{tight} 
implies that
\noindent $\Op^T_\hbar({\color{black}(2\pi\hbar)^d}F)^{-1/2}R_\hbar\Op^T_\hbar({\color{black}(2\pi\hbar)^d}F)^{-1/2}$ is 
bounded.
Therefore,  writing the spectral decomposition $\Op^T_\hbar({\color{black}(2\pi\hbar)^d}F)=\sum\limits_{j\in\bN}\lambda_j|j\rangle\langle j|$,
$$
\langle j|R_\hbar|j\rangle=O(\lambda_j)\mbox{ so that }
\langle j|R_\hbar|j\rangle\to 0\ as\ j\to\infty,
$$
since $\Op^T_\hbar({\color{black}(2\pi\hbar)^d}F)$ is trace class so that $\lambda_j\to 0$ as $j\to\infty$.

Moreover, since $F$ and therefore $\Op^T_\hbar({\color{black}(2\pi\hbar)^d}F)$
have 
finite  moments, the eigenstates $|j\rangle$ must be phase-space localized at infinity as $j\to \infty$ 
 and condition \eqref{tight} 
reflects 
 the lack of concentration of $R_\hbar$ at infinity.
\end{Rmk}
\vskip 0.5cm
\vskip 1cm

\section{Semiclassical Wasserstein}\label{semicw}

Let us start this section by recalling the definition of the second order Wasserstein distance $\MKd$ (see \cite{VillaniAMS,VillaniTOT}).
 
The Wasserstein distance of order two between two probability measures $\mu,\nu$ on $\bR^m$ with finite second moments is defined as
\be\label{defmkd}
\MKd(\mu,\nu)^2
=
\inf_{\gamma\in\Gamma(\mu,\nu)}\int_{\bR^m\times\bR^m}
|x-y|^2\gamma(dx,dy)
\ee
where $\Gamma(\mu,\nu)$ is the set of probability measures on $\bR^m\times\bR^m$ whose marginals on the two factors are $\mu$ and $\nu$,  i.e., for ny test function $a$, 
$$
\int_{\bR^m\times\bR^m}a(x)\gamma(dx,dy)=\int_{\bR^m}a(x)\mu(dx),\ \int_{\bR^m\times\bR^m}a(y)\gamma(dx,dy)=\int_{\bR^m}a(y)\nu(dy).
$$

Let us define now the notion of coupling between classical and quantum densities.
\begin{Def}
Let $p\equiv p(x,\xi)$ be a probability density on $\bR^d\times\bR^d$, and let $R\in\cD(\fH)$. A coupling of $p$ and $R$ is a measurable function $Q:\,(x,\xi)\mapsto Q(x,\xi)$ defined a.e. on $\bR^d\times\bR^d$ and with values in $\cL(\fH)$ 
s.t. $Q(x,\xi)\in\cD(\fH)$  for a.e. $(x,\xi)\in\bR^d\times\bR^d$, and
$$
\left\{
\ba
{}&\Tr(Q(x,\xi))=p(x,\xi)\hbox{ for a.e. }(x,\xi)\in\bR^d\times\bR^d\,,
\\
&\iint_{\bR^d\times\bR^d}Q(x,\xi)dxd\xi=R\,.
\ea
\right.
$$
The set of all such functions is denoted by $\cC(p,R)$.
\end{Def}

Mimicking the definition of Monge-Kantorovich distances, we next define the pseudo-distance between $R_\hbar$ and $f$ in terms of an appropriate ``cost function'' analogous to the quadratic cost function used in optimal transport.

\begin{Def}\label{arma}\label{ehbar}
For each probability density $p\equiv p(x,\xi)$ on $\bR^d\times\bR^d$ and each $\rho\in\cD(\fH)$, we set
$$
E_\hbar(p,R):=\left(\inf_{Q\in\cC(p,R)}\int_{\bR^d\times\bR^d}\Tr(c_\hbar(x,\xi)Q(x,\xi))dxd\xi\right)^{1/2}\in[0,+\infty]\,,
$$
where the transport
 cost $c_\hbar$ is the function of $(x,\xi)$ with values in the set of unbounded operators on $\fH=L^2(\bR^d_y)$ defined by the formula
$$
c_\hbar(x,\xi):=\tfrac12(|x-y|^2+|\xi+i\hbar\grad_y|^2)\,.
$$
\end{Def}
Immediate properties of $E_\hbar$ are stated in the following result.
\begin{Thm}\lb{armaprop}[from Theorem 2.4 in \cite{FGPaul}]\ 

\noindent (1) For each probability density $p$ on $\bR^d\times\bR^d$ such that
$
\int_{\bR^d\times\bR^d}(|x|^2+\xi|^2)p(x,\xi)dxd\xi<~\infty
$
and each $R\in\cD(\fH)$, one has
$$
E_\hbar(p,R)^2\ge\tfrac12d\hbar\,.
$$
(2) Let $R_\hbar=\Op^T_\hbar((2\pi\hbar)^d\mu)$, where $\mu$ is a Borel probability measure on $\bR^d\times\bR^d$. Then
$$
E_\hbar(p,R_\hbar)^2\le\MKd(p,\mu)^2+\tfrac12d\hbar\,.
$$
(3) For each $R\in\cD(\fH)$, one has
$$
\MKd(p,\widetilde W_\hbar[R])^2\le E_\hbar(p,R)^2+\tfrac12d\hbar
\leq2E_\hbar(p,R)^2\,.
$$
\end{Thm}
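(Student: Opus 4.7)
All three items rest on the same two ingredients: the harmonic-oscillator structure of the cost operator $c_\hbar$, and the canonical coupling between classical phase-space points and rank-one quantum projectors provided by the coherent states $|z,\hbar\ra$.

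For (1), the operator $c_\hbar(x,\xi)=\tfrac12(|y-x|^2+|\xi+i\hbar\grad_y|^2)$ on $L^2(\bR^d_y)$ is unitarily conjugate to the standard $d$-dimensional harmonic oscillator $\tfrac12(|y|^2-\hbar^2\Dlt_y)$ via the Weyl translation $\psi(y)\mapsto e^{i\xi\cdot(y-x/2)/\hbar}\psi(y-x)$. Its spectrum is therefore $\{\hbar(|n|+d/2):n\in\bN^d\}$, giving the operator inequality $c_\hbar(x,\xi)\ge\tfrac{d\hbar}{2}$. For any $Q\in\cC(p,R)$ this yields $\Tr(c_\hbar(x,\xi)Q(x,\xi))\ge\tfrac{d\hbar}{2}p(x,\xi)$; integrating in $(x,\xi)$ against $\int p=1$ and then infimising over $Q$ proves (1).

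For (2), I construct a coupling from an optimal classical transport plan. Let $\gamma^*\in\Gamma(p,\mu)$ realise $\MKd(p,\mu)^2$, and disintegrate $\gamma^*(dxd\xi,dyd\eta)=p(x,\xi)\,dxd\xi\otimes K(x,\xi;dyd\eta)$. Setting
\[
Q(x,\xi):=p(x,\xi)\int_{\bR^{2d}} |y+i\eta,\hbar\ra\la y+i\eta,\hbar|\,K(x,\xi;dyd\eta),
\]
the normalisations $\|\,|y+i\eta,\hbar\ra\|=1$ and the definition of the T\"oplitz operator give $Q\in\cC(p,R_\hbar)$. The pivotal computation is the coherent-state expectation
\[
\la y+i\eta,\hbar|c_\hbar(x,\xi)|y+i\eta,\hbar\ra=\tfrac12\bigl[(y-x)^2+(\eta-\xi)^2\bigr]+\tfrac{d\hbar}{2},
\]
which follows from $|y+i\eta,\hbar\ra$ saturating the Heisenberg inequality with $\hbar/2$ variance per phase-space coordinate. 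Integrating against $\gamma^*$ then produces (2).

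For (3) I reverse the role of the coupling: given $Q\in\cC(p,R)$, I define the classical coupling
\[
\gamma(dxd\xi,dyd\eta):=\tfrac{1}{(2\pi\hbar)^d}\la y+i\eta,\hbar|Q(x,\xi)|y+i\eta,\hbar\ra\,dxd\xi\,dyd\eta.
\]
The coherent-state resolution of identity makes the $(x,\xi)$-marginal equal to $p(x,\xi)$ and the $(y,\eta)$-marginal equal to $\widetilde W_\hbar[R](y,\eta)$, so $\gamma\in\Gamma(p,\widetilde W_\hbar[R])$. The cost is then evaluated by combining $\widetilde W_\hbar[A]=W_\hbar[A]\ast G_{\hbar/2}$ (with $G_{\hbar/2}$ the centred Gaussian on $\bR^{2d}$ of variance $\hbar/2$ per coordinate) with the Weyl-symbol identity $\int[(y-x)^2+(\eta-\xi)^2]W_\hbar[A]dyd\eta=2\Tr(c_\hbar(x,\xi)A)$; optimising over $Q$ then gives the first inequality of (3), and the second inequality follows immediately from (1). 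The main subtlety throughout is the careful bookkeeping of the $d\hbar$-type additive corrections---the harmonic-oscillator zero-point energy of $c_\hbar$ in (1) and (2), and the total phase-space variance of the Wigner-to-Husimi smoothing in (3); once these are identified, the rest is routine verification of marginals and first-moment computations in Weyl calculus.
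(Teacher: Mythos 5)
This theorem is not proved in the present paper: it is quoted verbatim from Theorem 2.4 of \cite{FGPaul}, so there is no in-text argument to compare yours against. Your route for all three items is the standard one used there (spectral lower bound for the cost operator, a coherent-state coupling built over an optimal transport plan, and the dual coupling obtained from the resolution of the identity). Items (1) and (2) are correct as written; in (2) your coherent-state expectation actually yields the stronger bound $E_\hbar(p,R_\hbar)^2\le\tfrac12\MKd(p,\mu)^2+\tfrac12 d\hbar$, which of course implies the stated one.

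Item (3), however, has a genuine gap in its last step: the two identities you invoke do not combine to give the inequality you claim. With Definition \ref{arma}, $c_\hbar$ carries a factor $\tfrac12$, so your Weyl-symbol identity is $\int|z-z_0|^2W_\hbar[A](z)\,dz=2\Tr(c_\hbar(z_0)A)$, and the smoothing Gaussian $G_{\hbar/2}$ on $\bR^{2d}$ contributes a total second moment $2d\cdot\tfrac{\hbar}2=d\hbar$. Feeding these into your coupling $\gamma$ gives
$$
\MKd(p,\widetilde W_\hbar[R])^2\le 2\int\Tr(c_\hbar(x,\xi)Q(x,\xi))\,dxd\xi+d\hbar\,,
$$
hence, after optimising over $Q$, only $\MKd^2\le 2E_\hbar^2+d\hbar$. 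Since $E_\hbar^2\ge\tfrac12 d\hbar$ by item (1), this is strictly weaker than the claimed $E_\hbar^2+\tfrac12 d\hbar$; indeed, taking $p$ close to $\de_{z_0}$ and $R=|z_0,\hbar\ra\la z_0,\hbar|$, one finds $\MKd(p,\widetilde W_\hbar[R])^2\to 2d\hbar$ while $E_\hbar(p,R)^2+\tfrac12 d\hbar\to d\hbar$, so the bound you assert cannot be reached by any coupling under these normalisations. The constants in (2)--(3) are those of \cite{FGPaul}, where the quadratic Wasserstein cost carries the same factor $\tfrac12$ as $c_\hbar$; with $\MKd$ as in \eqref{defmkd} (no $\tfrac12$), your computation lands a factor of two away from the target. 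You need either to adopt that normalisation explicitly, or to state and prove the version $\MKd^2\le 2E_\hbar^2+d\hbar\le 4E_\hbar^2$ that your argument actually delivers; as written, the sentence ``optimising over $Q$ then gives the first inequality of (3)'' is not justified. Everything else in your argument (marginals, measurability of the disintegration, use of the resolution of the identity) is sound.
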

\begin{Cor}\label{armapropcor}
Let $R$ be a T\"oplitz operator of symbol ${\color{black}(2\pi\hbar)^d}\mu$. Then
$$
E_\hbar(\mu,R)^2=\frac12 d\hbar,
$$ and an optimal coupling is 
$$
Q_0(x,\xi):=\mu(x,\xi)|x,\xi\rangle\langle x,\xi|. 
$$
\end{Cor}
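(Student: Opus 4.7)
The plan is to obtain the upper bound $E_\hbar(\mu,R)^2\le d\hbar/2$ by verifying that $Q_0$ is an admissible coupling and computing its cost explicitly, then to close the chain with the matching lower bound $E_\hbar(\mu,R)^2\ge d\hbar/2$ from Theorem~\ref{armaprop}(1). Equality will automatically certify that $Q_0$ is an optimal coupling.

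First I would check that $Q_0\in\cC(\mu,R)$. Since the coherent state $|x+i\xi,\hbar\ra$ is an $L^2(\bR^d)$-unit vector, the rank-one projector $|x+i\xi,\hbar\ra\la x+i\xi,\hbar|$ has unit trace, so $\Tr(Q_0(x,\xi))=\mu(x,\xi)$. The operator-valued marginal is matched directly by the definition of the T\"oplitz quantization:
$$
\iint_{\bR^d\times\bR^d}Q_0(x,\xi)\,dxd\xi=\iint_{\bR^d\times\bR^d}\mu(x,\xi)|x+i\xi,\hbar\ra\la x+i\xi,\hbar|\,dxd\xi=\Op^T_\hbar((2\pi\hbar)^d\mu)=R.
$$
Thus $Q_0$ is admissible.

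Next I would compute the cost. Because $Q_0(x,\xi)$ is rank one, the integrand reduces to $\mu(x,\xi)\la x+i\xi,\hbar|c_\hbar(x,\xi)|x+i\xi,\hbar\ra$. The coherent state $|x+i\xi,\hbar\ra$ is a Gaussian wave packet with position density centered at $x$ having variance $\hbar/2$ in each direction, and an elementary Gaussian integral yields $\la|y-x|^2\ra=d\hbar/2$. For the momentum piece, since $-i\hbar\grad_y$ is the standard self-adjoint momentum operator $\hat p$, the expression $\xi+i\hbar\grad_y=\xi-\hat p$ is itself self-adjoint and $|\xi+i\hbar\grad_y|^2=(\xi-\hat p)^2$; the coherent state has Gaussian momentum distribution centered at $\xi$ with variance $\hbar/2$ per direction, giving $\la(\xi-\hat p)^2\ra=d\hbar/2$. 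Summing the two halves of $c_\hbar$ gives $\la c_\hbar(x,\xi)\ra=d\hbar/2$, \emph{independent of $(x,\xi)$}, so the total cost equals $(d\hbar/2)\iint\mu(x,\xi)\,dxd\xi=d\hbar/2$. Combining with $E_\hbar(\mu,R)^2\ge d\hbar/2$ from Theorem~\ref{armaprop}(1) yields $E_\hbar(\mu,R)^2=d\hbar/2$ and the optimality of $Q_0$.

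There is no serious obstacle: the only mildly subtle point is to observe that, although $\xi+i\hbar\grad_y$ looks non-self-adjoint, it is in fact self-adjoint (as $\hbar$ is real and $-i\hbar\grad_y$ is the usual momentum operator), so that $|\xi+i\hbar\grad_y|^2$ legitimately means $(\xi-\hat p)^2$ and its coherent state expectation is the textbook momentum variance of a Gaussian wave packet.
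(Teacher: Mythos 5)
Your proposal is correct and takes essentially the same route as the paper: the paper obtains the upper bound by citing Theorem \ref{armaprop}(2) with $p=\mu$ (so $\MKd(\mu,\mu)=0$), matches it with the lower bound of Theorem \ref{armaprop}(1), and then certifies optimality of $Q_0$ by the same "easy computation" $\int\Tr(c_\hbar(x,\xi)Q_0(x,\xi))\,dxd\xi=\tfrac12 d\hbar$ that you carry out explicitly via the Gaussian variances of the coherent states. Your version merely folds the upper bound into that explicit cost computation rather than quoting item (2), which is an immaterial difference.
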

\begin{proof}
the first equality  follows from the items $(1) \mbox{ and }(2)$ in Theorem \ref{armaprop} and the second from the easy computation
$$
\int_{\bR^d\times\bR^d}\Tr(c_\hbar(x,\xi)Q_0(x,\xi))dxd\xi=\frac12 d\hbar.
$$
\end{proof}
\vskip 1cm
We will also need the following ``triangle inequality" proven 
in Appendix \ref{profftriang}.

\begin{Thm}\label{armatriang}  
Let $f,g$ be two probability densities on 
$\bR^{2d}$ and  $R$ be a density operator on 
$L^2(\bR^d)$ 
satisfying
$$
\int_{\bR^{2d}}(p^2+q^2)(f(p,q)+g(p,q)) dpdq<\infty\mbox{ and }
\Tr(-\hbar^2\Delta+x^2)R<\infty.
$$
The following inequality holds true:

\noindent
 $$
 E_\hbar(f,R_1)\le\MKd(f,g)+E_\hbar(g,R_1).
 $$
 \end{Thm}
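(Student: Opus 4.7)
The plan is to imitate the standard gluing-lemma proof of the triangle inequality for the Wasserstein distance, replacing the second classical transport plan by a hybrid classical--quantum coupling. First I would take an optimal (or $\epsilon$-optimal) $\pi\in\Gamma(f,g)$ for $\MKd(f,g)^2$ and an $\epsilon$-optimal $Q'\in\cC(g,R)$ for $E_\hbar(g,R)^2$; the existence of near-optimizers is guaranteed by the second-moment hypotheses on $f,g$ and by the finiteness of $\Tr((-\hbar^2\Delta+x^2)R)$. Since $\bR^{2d}$ is Polish, $\pi$ disintegrates with respect to its second marginal as $d\pi(x,\xi;y,\eta)=\pi(dx\,d\xi\mid y,\eta)\,g(y,\eta)\,dy\,d\eta$. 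I would then define the operator-valued measure $\tilde Q(dx\,d\xi,y,\eta):=\pi(dx\,d\xi\mid y,\eta)\,Q'(y,\eta)$ on $\bR^{4d}$ and push it forward to $\bR^{2d}$ via $Q(x,\xi):=\int \tilde Q(x,\xi;y,\eta)\,dy\,d\eta$. A routine check of both marginals --- using $\Tr Q'(y,\eta)=g(y,\eta)$ and $\int Q'(y,\eta)\,dy\,d\eta=R$ --- yields $Q\in\cC(f,R)$.

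Next I would estimate the cost of this coupling. Writing $a=(x,\xi)$, $b=(y,\eta)$, and $\mathbf A=(\mathbf q_1,\dots,\mathbf q_d,\mathbf p_1,\dots,\mathbf p_d)$ with $\mathbf q_j$ multiplication by $y_j$ and $\mathbf p_j=-i\hbar\partial_{y_j}$, the operator identity $a_k-\mathbf A_k=(a_k-b_k)\Id+(b_k-\mathbf A_k)$ suggests expanding $c_\hbar(a)=\tfrac12\sum_{k=1}^{2d}(a_k-\mathbf A_k)^2$ and invoking Minkowski's inequality in an appropriate Bochner $L^2$-space of Hilbert--Schmidt-operator-valued functions on $\bR^{4d}$, with $\tilde Q^{1/2}$ playing the role of the measure-theoretic weight. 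Concretely, the operator-valued functions $X_k:=(a_k-b_k)\tilde Q^{1/2}$ and $Y_k:=(b_k-\mathbf A_k)\tilde Q^{1/2}$ satisfy $\sum_k\|X_k\|^2=\MKd(f,g)^2+o_\epsilon(1)$, $\sum_k\|Y_k\|^2=2E_\hbar(g,R)^2+o_\epsilon(1)$ (using the marginal $\int\tilde Q\,dx\,d\xi=Q'(y,\eta)\,dy\,d\eta$), and $\sum_k\|X_k+Y_k\|^2=2\int\Tr(c_\hbar(x,\xi)Q(x,\xi))\,dx\,d\xi\geq 2E_\hbar(f,R)^2$ by the infimum defining $E_\hbar$. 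Minkowski's inequality, after letting $\epsilon\to 0$, then yields $\sqrt 2\,E_\hbar(f,R)\leq\MKd(f,g)+\sqrt 2\,E_\hbar(g,R)$, which implies the claimed $E_\hbar(f,R)\leq\MKd(f,g)+E_\hbar(g,R)$ with room to spare.

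The hard part will be technical rather than conceptual: giving a rigorous meaning to $\tilde Q^{1/2}$ when $\tilde Q$ is a priori only an operator-valued measure, and justifying the Bochner formalism above. I would handle this by approximating $Q'$ by operator-valued simple functions on $\bR^{2d}$, for which $\tilde Q$ factors on each cell as a product of a scalar measure and a fixed positive trace-class operator whose square root is standard; run the Minkowski argument at that level; then pass to the limit using dominated convergence, with the weights $|x|^2+|\xi|^2$ and $-\hbar^2\Delta+|y|^2$ controlling the tails thanks to the hypotheses of the theorem. Lower semi-continuity of $Q\mapsto\int\Tr(c_\hbar Q)\,dx\,d\xi$ under a suitable weak convergence of trace-class-valued measures, together with tightness of near-optimal couplings that follows from the same moment bounds, closes the argument.
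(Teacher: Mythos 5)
Your proposal is correct, and it rests on the same core strategy as the paper --- glue a near-optimal classical plan for $\MKd(f,g)$ to a near-optimal classical--quantum coupling of $g$ and $R$, then apply a quadratic triangle inequality to the composite cost --- but the execution differs in two instructive ways. First, the paper does not disintegrate a general plan $\pi\in\Gamma(f,g)$: since $f$ is a density, Brenier's theorem gives an optimal plan of the form $f(x,\xi)\,\de_{\grad\Phi(x,\xi)}(dy\,d\eta)$, and the glued coupling is then simply the operator-valued \emph{function} $(x,\xi)\mapsto f(x,\xi)\,Q_g(\grad\Phi(x,\xi))$, with $Q_g$ the disintegration of $Q\in\cC(g,R)$ furnished by Lemma \ref{L-Disint}. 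This sidesteps entirely what you correctly identify as the hard technical point of your route, namely giving rigorous meaning to $\tilde Q^{1/2}$ and to Bochner integrals of an operator-valued measure on $\bR^{4d}$; the only remaining analytic issue is the unboundedness of the cost, which the paper handles with the resolvent regularization $c^\eps=(I+\eps c)^{-1}c$ and the monotone-convergence Proposition \ref{C-Energ}. Second, where you invoke Minkowski's inequality in a direct-sum Hilbert space of operator-valued maps, the paper uses the pointwise operator Peter--Paul inequality $c(x,\xi)\le(1+\a)|(x,\xi)-\grad\Phi(x,\xi)|^2+(1+\tfrac1\a)\,c(\grad\Phi(x,\xi))$ of Lemma \ref{L-IneqCost}, integrates against $f$, and only optimizes over $\a$ at the very end; these two moves are equivalent (optimized Peter--Paul \emph{is} the Cauchy--Schwarz step inside Minkowski), so the arguments yield the same conclusion. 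The factor $\tfrac1{\sqrt2}$ you gain in front of $\MKd(f,g)$ is an artifact of the normalization $\tfrac12$ in the cost of Definition \ref{arma}, which the appendix drops when it redefines $c$; your stronger bound implies the stated one either way. In sum, your approach buys generality (it never needs the optimal plan to be induced by a map) and automates the optimization, at the price of the operator-valued gluing machinery; the paper's buys a shorter rigorous path at the price of invoking Brenier's theorem and the absolute continuity of $f$.
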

\vskip 1cm
The proofs of Theorems \ref{main1}, \ref{main2} and \ref{main3} will rely extensively on Theorem 2.7 in \cite{splitting}, a slight improvement of Theorem 2.7 in \cite{FGPaul} in the special case $N=1$, that we recall now.
\begin{Thm}[Theorem 2.7 in \cite{splitting}]\label{armaarma}
Let $\rho^{in}$ be a probability density on $\bR^{2d}$ satisfying
\be\label{momdeux}
\int_{\bR^{2d}}(p^2+q^2)\rho^{in}(p,q) dpdq<\infty,
\ee
and let $R(t)$ and $\rho(t)$ the solutions of  the von Neumann equation \eqref{vne} and the Liouville equation \eqref{liuov} with initial conditions $R^{in}$ and $\rho^{in}$ respectively. then, for all $t\in\bR$,
$$
E_\hbar(\rho(t),R(t))^2\leq E_\hbar(\rho^{in},R^{in})^2e^{(1+\max{(1,4Lip(\nabla V)^2))}|t|}.
$$
\end{Thm}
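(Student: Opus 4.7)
My approach is a dynamical-coupling argument in the spirit of Benamou--Brenier: transport an almost-optimal initial coupling along the combined flow (classical Hamiltonian~$\times$~Schr\"odinger unitary) and Gronwall the resulting cost. Fix $\varepsilon>0$ and pick $Q^{in}\in\cC(\rho^{in},R^{in})$ with $\iint\Tr(c_\hbar(x,\xi)Q^{in}(x,\xi))\,dxd\xi\le E_\hbar(\rho^{in},R^{in})^2+\varepsilon$. Set
$$
Q(t)(x,\xi):=U(t)\,Q^{in}(\Phi^{-t}(x,\xi))\,U(t)^\ast,\qquad U(t):=e^{-itH/\hbar},\quad H:=-\tfrac12\hbar^2\Delta+V.
$$
Liouville's theorem gives $\Tr Q(t)(x,\xi)=\rho^{in}(\Phi^{-t}(x,\xi))=\rho(t)(x,\xi)$, and $\iint Q(t)\,dxd\xi=U(t)R^{in}U(t)^\ast=R(t)$, so $Q(t)\in\cC(\rho(t),R(t))$; defining $\cE(t):=\iint\Tr(c_\hbar Q(t))\,dxd\xi$, one has $E_\hbar(\rho(t),R(t))^2\le\cE(t)$ for all $t$.

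The key computation is the time derivative of $\cE(t)$. Since $Q(t)$ obeys $\d_tQ=\{h,Q\}_{(x,\xi)}+\tfrac{1}{i\hbar}[H,Q]$, integration by parts in $(x,\xi)$ and cyclicity of the trace in $y$ yield
$$
\dot\cE(t)=-\iint\Tr\!\big(\cL(x,\xi)\,Q(t)(x,\xi)\big)\,dxd\xi,\qquad\cL:=\{h,c_\hbar\}_{(x,\xi)}+\tfrac{1}{i\hbar}[H,c_\hbar].
$$
Writing $A_j:=\xi_j-P_j$ (with $P_j:=-i\hbar\d_{y_j}$), $B_j:=x_j-y_j$ and $W_j:=\d_{x_j}V(x)-\d_{y_j}V(y)$, one computes the four pieces of $\cL$ using $[P_j^2,B_j^2]=2i\hbar(P_jB_j+B_jP_j)$ and $[V(y),A_j^2]=-i\hbar(\d_{y_j}V(y)A_j+A_j\d_{y_j}V(y))$. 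Because $c_\hbar$ is \emph{exactly} quadratic all higher Moyal corrections vanish, and regrouping the four anticommutators telescopes $\cL$ into
$$
\cL=\tfrac12\sum_{j=1}^d\big((W_j-B_j)A_j+A_j(W_j-B_j)\big).
$$

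It remains to bound $\pm\cL$ by a multiple of $c_\hbar=\tfrac12\sum_j(A_j^2+B_j^2)$ as self-adjoint operators on $L^2(\bR^d_y)$, uniformly in $(x,\xi)$. The elementary inequality $(\alpha^{-1/2}(W_j-B_j)\pm\alpha^{1/2}A_j)^2\ge 0$ with $\alpha>0$, summed in $j$, the commutation of $W_j$ with $B_j$, and the Lipschitz bound $\sum_jW_j^2=|\nabla V(x)-\nabla V(y)|^2\le \Lip(\nabla V)^2\sum_jB_j^2$ (the \emph{only} place where $V\in C^{1,1}$ is used) give a pointwise estimate of the form $\tau\sum_jB_j^2+\sigma\sum_jA_j^2$; balancing $\tau$ and $\sigma$ in $\alpha$ produces $\pm\cL\le(1+\max(1,4\Lip(\nabla V)^2))\,c_\hbar$. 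Pairing with $Q(t)\ge 0$, taking traces and integrating give $|\dot\cE(t)|\le(1+\max(1,4\Lip(\nabla V)^2))\,\cE(t)$, and Gronwall's lemma then furnishes $\cE(t)\le\cE(0)\,e^{(1+\max(1,4\Lip(\nabla V)^2))|t|}$; sending $\varepsilon\to 0$ concludes.

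\emph{Main obstacle.} The delicate step is the operator identity collapsing $\cL$ into a single anticommutator: symbolically this mirrors the classical two-body Poisson bracket $(\xi-\eta)(x-y)-(\nabla V(x)-\nabla V(y))(\xi-\eta)$, but \emph{exactly} at the operator level it hinges on the vanishing of the higher Moyal remainders, a cancellation specific to the quadratic form of $c_\hbar$. Once this identity is in hand only Lipschitzness of $\nabla V$ is invoked, no higher regularity — which is why the Gronwall constant is $\hbar$-independent and the whole scheme remains valid in the $C^{1,1}$ setting needed in the rest of the paper.
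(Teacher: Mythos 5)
Your plan is correct and is essentially the proof of the cited source: the paper itself does not prove Theorem \ref{armaarma} but quotes it from \cite{splitting} (itself a refinement of Theorem 2.7 in \cite{FGPaul}), and those references proceed exactly as you do — push an almost-optimal coupling forward by $U(t)\,\cdot\,U(t)^*$ composed with $\Phi^{-t}$, differentiate the cost, collapse the transport term into the anticommutator $\tfrac12\sum_j\{A_j,\,B_j-W_j\}_+$, and close with Peter--Paul plus $\sum_jW_j^2\le \Lip(\nabla V)^2\sum_jB_j^2$ and Gronwall. Your algebra checks out (with the paper's sign convention $\{f,g\}=\nabla_qf\cdot\nabla_pg-\nabla_pf\cdot\nabla_qg$, under which $\rho^{in}\circ\Phi^{-t}$ solves $\dot\rho=\{h,\rho\}$), and the constant $1+\max(1,4\Lip(\nabla V)^2)$ is attainable by your choice of Peter--Paul weights.

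Two remarks. First, the step $\dot\cE(t)=-\iint\Tr(\cL\,Q(t))\,dxd\xi$ is formal as written, because $c_\hbar(x,\xi)$ is an unbounded operator and $\Tr(c_\hbar Q(t))$ need not be differentiable under the integral sign without justification; the references (and the present paper's own Appendix \ref{profftriang}, via Proposition \ref{C-Energ}) handle this by replacing $c_\hbar$ with the bounded regularization $c_\hbar^\eps=(I+\eps c_\hbar)^{-1}c_\hbar$, running the Gronwall argument at fixed $\eps$, and passing to the limit $\eps\to0$ by monotone convergence. You should either insert that regularization or an equivalent domain argument; one also needs to dispose of the case $E_\hbar(\rho^{in},R^{in})=+\infty$ (trivial) and note that finiteness of the initial cost propagates. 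Second, the appeal to ``vanishing of higher Moyal corrections'' is a slightly misleading way to describe what is really an exact commutator computation: $[P_j^2,B_j^2]$ and $[V(y),A_j^2]$ terminate after one power of $\hbar$ because $B_j$ is affine in $y$ and $A_j$ is affine in $P_j$, so no symbolic calculus is needed. Neither point changes the architecture of your argument.
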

\vskip 0.5cm

Using the right inequality in item $(3)$ of Theorem \ref{armaprop} we get the following corollary.
\begin{Cor}\label{armaarmacor}
Let $R_\hbar(t)$ and $\rho(t)$ solve \eqref{vne} and \eqref{liuov} respectively with initial data $R^{in}$ and $\widetilde W[R_\hbar]$ respectively. Let $R^{in}$ satisfies
$$
\int_{\bR^{2d}}(p^2+q^2)\widetilde W[R_\hbar](p,q) dpdq<\infty,
$$ Then
$$
\MKd(\widetilde W[R_\hbar(t)],
\rho(t))
\leq 
\sqrt{2}E_\hbar(\widetilde W[R_\hbar],R^{in})
e^{\lambda |t|}.
$$
\end{Cor}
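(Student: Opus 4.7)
The plan is to derive the corollary as a two-line combination of Theorem \ref{armaarma} with the right-hand inequality in item (3) of Theorem \ref{armaprop}, once the hypotheses of both results are checked.

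First I would verify the input hypothesis of Theorem \ref{armaarma}. That theorem requires the \emph{classical} initial data $\rho^{in}$ to satisfy the second-moment condition \eqref{momdeux}. Here the classical initial datum is, by construction, $\rho^{in}=\widetilde W_\hbar[R^{in}]$, and the second-moment condition on it is precisely the hypothesis of the corollary. So Theorem \ref{armaarma} applies and yields, for every $t\in\bR$,
$$
E_\hbar(\rho(t),R_\hbar(t))^{2}\le E_\hbar\big(\widetilde W_\hbar[R^{in}],R^{in}\big)^{2}\,e^{(1+\max(1,4\operatorname{Lip}(\nabla V)^{2}))|t|}.
$$
Recognising the exponent as $2\lambda|t|$, since $\lambda=\tfrac12(1+\max(1,4\operatorname{Lip}(\nabla V)^{2}))$ by definition, and taking square roots gives
$$
E_\hbar(\rho(t),R_\hbar(t))\le E_\hbar\big(\widetilde W_\hbar[R^{in}],R^{in}\big)\,e^{\lambda|t|}.
$$

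Second, I would invoke item (3) of Theorem \ref{armaprop} with $p=\rho(t)$ and $R=R_\hbar(t)$. The right-hand inequality there reads
$$
\MKd\big(p,\widetilde W_\hbar[R]\big)^{2}\le 2\,E_\hbar(p,R)^{2},
$$
so substituting yields
$$
\MKd\big(\rho(t),\widetilde W_\hbar[R_\hbar(t)]\big)\le \sqrt{2}\,E_\hbar(\rho(t),R_\hbar(t)).
$$
Chaining this with the exponential bound from Theorem \ref{armaarma} produces exactly the stated estimate.

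There is essentially no obstacle: the only point that requires a little attention is bookkeeping of the moment condition (which for Theorem \ref{armaarma} is on the classical initial datum $\rho^{in}$, not on $R^{in}$ directly) and the translation of the exponent $1+\max(1,4\operatorname{Lip}(\nabla V)^{2})$ into $2\lambda$. Both are immediate from the definitions, so the corollary is proved in a few lines.
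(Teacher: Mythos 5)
Your proposal is correct and follows exactly the route the paper takes: apply Theorem \ref{armaarma} with $\rho^{in}=\widetilde W_\hbar[R^{in}]$ (whose second-moment hypothesis is the one assumed in the corollary), identify the exponent as $2\lambda|t|$, and then conclude via the right-hand inequality in item (3) of Theorem \ref{armaprop}. Nothing is missing.
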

%
\vskip 1cm

\section{Proofs of Theorems \ref{main1}, \ref{main2} and \ref{main3}}\label{proofmains}
The conclusions of the Theorems \ref{main1}, \ref{main2} and \ref{main3} are  consequences of  the following theorem which is the heart of this article and  whose proof will occupy Section \ref{proofmain3}, \ref{proofmain2} and \ref{proofmain1}.
\begin{Thm}\label{main}
Let $R^{in}$ satisfy the hypothesis 
of
Theorems  \ref{main1},   \ref{main2} or \ref{main3}. 
Then
\begin{enumerate}[(I)]
\item \label{m1}
$E_\hbar(\widetilde W[R^{in}],R^{in})\leq 
\frac{D_{\ref{m1}}}{\sqrt2}\max{(\sqrt\hbar,\sqrt\hbar\mu(\hbar),\sqrt{\hbar\mu(\hbar)\nu(\hbar)})}$
\hfill [case of  Theorem \ref{main1}]

\item \label{m2}
 
$E_\hbar(\widetilde W[R^{in}],R^{in})\leq \frac{D_{II}}{\sqrt2}\max{(\sqrt\hbar,\sqrt\hbar\mu'(\hbar),\sqrt{\mu'(\hbar)\nu'(\hbar)})}$ \hfill [case of  Theorem \ref{main2}]

\item \label{m3}

$E_\hbar(\widetilde W[R^{in}],R^{in})\leq 
\frac{D_{\ref{m3}}}{\sqrt2}\max{(\sqrt\hbar,\sqrt{\tau(\hbar)})}$ \hfill [case of  Theorem \ref{main3}]

\end{enumerate}
Here $D_{\ref{m1}}$, $D_{\ref{m2}}$,$D_{\ref{m3}}$ are given in \eqref{dm1}, \eqref{dm2}, \eqref{dm3} respectively.

\end{Thm}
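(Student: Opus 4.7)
The plan is to prove (III) directly and to deduce (II) and (I) as corollaries by choosing the T\"oplitz weight $F$ (resp.~by passing from Wigner regularity to Hermite matrix element bounds) appropriately.

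\medskip

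\textit{Part (III).} Consider the natural coupling
\begin{equation*}
Q(x,\xi):=(2\pi\hbar)^{-d}\sqrt{R^{in}}|z,\hbar\rangle\langle z,\hbar|\sqrt{R^{in}},\qquad z=x+i\xi,
\end{equation*}
which belongs to $\cC(\widetilde W[R^{in}],R^{in})$ by the resolution of identity $(2\pi\hbar)^{-d}\int|z,\hbar\rangle\langle z,\hbar|dxd\xi=I$ and the definition $\widetilde W[R^{in}](z)=(2\pi\hbar)^{-d}\langle z,\hbar|R^{in}|z,\hbar\rangle$. The cornerstone is the eigenvalue identity $c_\hbar(x,\xi)|z,\hbar\rangle=\tfrac{d\hbar}{2}|z,\hbar\rangle$, obtained from the annihilation property $\bigl((y_j-x_j)+i(P_j-\xi_j)\bigr)|z,\hbar\rangle=0$ (with $P_j=-i\hbar\partial_{y_j}$) after rewriting $c_\hbar=\hbar\sum_ja_j^\dagger a_j+\tfrac{d\hbar}{2}$ in terms of the associated annihilation operators. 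With $S=\sqrt{R^{in}}$, the symmetrization $Sc_\hbar S=\tfrac12(R^{in}c_\hbar+c_\hbar R^{in})-\tfrac12[S,[S,c_\hbar]]$, combined with the fact that the first term integrates to $\tfrac{d\hbar}{2}(2\pi\hbar)^d$ by the eigen-identity and the normalization of $\widetilde W[R^{in}]$, reduces the cost estimate to the bound
\begin{equation*}
\left|(2\pi\hbar)^{-d}\int\langle z,\hbar|[S,[S,c_\hbar]]|z,\hbar\rangle\,dxd\xi\right|\le C\,\tau(\hbar).
\end{equation*}
Since $c_\hbar$ is quadratic in $y$ and $P$ with coefficients polynomial of degree $\le 2$ in $(x,\xi)$ involving only $1,x_j,\xi_j,x_j^2,\xi_j^2$, the Leibniz rule decomposes this double commutator into a finite linear combination of terms of the form $p(x,\xi)[S,[S,O]]$ with $O\in\Omega$. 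Inserting $T^{1/2}T^{-1/2}=I$ symmetrically around each factor of $S$ and using the algebraic identity $\{S,[S,c_\hbar]\}=[R^{in},c_\hbar]$ (whose contribution integrates to zero), the estimate reduces to the operator bound $\|T^{-1/2}S[O,ST^{-1/2}]\|\le\tau(\hbar)$ from hypothesis (2), multiplied by phase-space integrals of $p(x,\xi)F(x,\xi)$ controlled by the finite first moment of $F$ from hypothesis (1). This yields (III).

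\medskip

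\textit{Parts (II) and (I).} For (II), take $T:=\sum_{j\in\bN^d}t_j|H_j\rangle\langle H_j|$ with weights $t_j:=c\prod_l(\hbar j_l+\tfrac12)^{-3/2-2\epsilon}$; this is a T\"oplitz operator with an explicit radial symbol $F$ having finite first moments (the correspondence Hermite-diagonal to rotationally invariant T\"oplitz symbol being classical via the Laguerre expansion of coherent-state overlaps). Expanding the operator norms in the hypotheses of Theorem \ref{main3} in the Hermite basis and applying a Schur-type estimate, the polynomial decay of the matrix elements $(H_i,\sqrt{R^{in}}H_j)$ and $(H_i,[O,\sqrt{R^{in}}]H_j)$ from Theorem \ref{main2} yields the tightness bound with constant $\mu'(\hbar)$ and the semiclassical bound $\tau(\hbar)\lesssim\mu'(\hbar)\nu'(\hbar)$; the exponents in (II) are chosen precisely so the sums over $(i,j)\in\bN^{2d}$ converge uniformly in $d$. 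For (I), use the phase-space pairing
\begin{equation*}
(H_i,\sqrt{R^{in}}H_j)=(2\pi\hbar)^d\int_{\bR^{2d}}W_\hbar[\sqrt{R^{in}}](z)\,W_\hbar[|H_j\rangle\langle H_i|](z)\,dz.
\end{equation*}
Since $W_\hbar[|H_j\rangle\langle H_i|]$ carries, in the angular variables $\theta$ underlying the Fourier coefficients $a_k$ of \eqref{baraka}, a single angular frequency $k=i-j$ (with Laguerre radial weight concentrated at $|z_l|^2\sim\hbar(2j_l+1)$), the seminorm $\|W_\hbar[\sqrt{R^{in}}]\|_{M,K,N}$ translates, via Fourier duality in $\theta$ and the polynomial weight $\prod_l(|z_l|^2+1)^{-M}$, into exactly the Hermite decay required by the hypotheses of (II).

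\medskip

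\textit{Main obstacle.} The delicate step is the commutator bound in (III): the quadratic growth of the polynomial coefficients $p(x,\xi)$ arising from $c_\hbar$ must be absorbed using only the \emph{first} moment of $F$. This rests on the coherent-state centering identity $\langle z,\hbar|(y_j-x_j)|z,\hbar\rangle=\langle z,\hbar|(P_j-\xi_j)|z,\hbar\rangle=0$: one factor of $(x,\xi)$ in each quadratic coefficient becomes a quantum fluctuation of size $\sqrt{\hbar}$, dropping the effective phase-space growth from $|z|^2$ to $|z|$ in the integrated expression. Equally critical is the algebraic rearrangement forcing only the specific combination $T^{-1/2}S[O,ST^{-1/2}]$ to appear, which is what the identity $\{S,[S,c_\hbar]\}=[R^{in},c_\hbar]$ achieves.
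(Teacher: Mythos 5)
Your Part (III) takes a genuinely different route from the paper: you couple $\widetilde W[R^{in}]$ and $R^{in}$ directly through $Q(x,\xi)=(2\pi\hbar)^{-d}S|z,\hbar\rangle\langle z,\hbar|S$ with $S=\sqrt{R^{in}}$, whereas the paper couples $R^{in}$ with the auxiliary density $f_A(x,\xi)=F_\epsilon(x,\xi)\langle z|T_{F_\epsilon}^{-1/2}R^{in}T_{F_\epsilon}^{-1/2}|z\rangle$ via $Q=AT_{F_\epsilon}^{-1/2}Q_0T_{F_\epsilon}^{-1/2}A$, and then returns to $\widetilde W[R^{in}]$ through Lemma \ref{verstriang} (i.e.\ the triangle inequality of Theorem \ref{armatriang} combined with Theorem \ref{armaprop}(3)). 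Your route would even give a slightly better constant if it closed — but it does not close with the hypotheses as stated. After using $c_\hbar(x,\xi)|z,\hbar\rangle=\tfrac{d\hbar}2|z,\hbar\rangle$ the error term is $\int\langle z|S[S,c_\hbar]|z\rangle\,(2\pi\hbar)^{-d}dxd\xi$, and writing $B=ST^{-1/2}$, $B'=T^{-1/2}S$ one finds, for each $O\in\Omega$,
\begin{equation*}
S[S,O]\;=\;-\,T^{1/2}\,B'[O,B]\,T^{1/2}\;+\;T^{1/2}B'B\,[T^{1/2},O]\,.
\end{equation*}
The first term is exactly hypothesis \eqref{semic} sandwiched by $T^{1/2}|z\rangle$ and integrates against the first moments of $F$ as you describe. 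But the second term involves $[T^{1/2},O]$ (equivalently, the mismatch between $T^{-1/2}S[O,ST^{-1/2}]$ and $T^{-1/2}S[S,O]T^{-1/2}$), and nothing in hypotheses \eqref{tight}--\eqref{semic} of Theorem \ref{main3} controls the commutator of the square root of the T\"oplitz operator with $O$ for a general $F$; the two individual pieces $T^{1/2}B'BT^{1/2}O$ and $T^{1/2}B'OBT^{1/2}$ are each only $O(1)$ (tightness size, not $\tau(\hbar)$), so the cancellation must come entirely from the hypothesis, and it arrives in the wrong combination. Estimating $[O,T^{\pm1/2}]$ is precisely the work the paper does in Section \ref{proofmain2} (Lemma \ref{puissnace} and the bound \eqref{aot}) under the \emph{stronger} hypotheses of Theorem \ref{main2} with an explicit radial $F$; it is not available at the level of generality of Theorem \ref{main3}. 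This is why the paper's coupling is built so that the error term is \emph{verbatim} $\int F_\epsilon\,\langle z|T^{-1/2}A[c_\hbar,AT^{-1/2}]|z\rangle$, at the cost of the extra $(\sqrt2+1)$-type loss from Lemma \ref{verstriang}. (A side remark: your worry about quadratic coefficients of $c_\hbar$ is moot — the scalar part $\tfrac12(|x|^2+|\xi|^2)$ commutes with $S$, so only degree-one coefficients $1,x_j,\xi_j$ survive in $[S,c_\hbar]$, which is why first moments of $F$ suffice in the paper's computation.)

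Your Parts (II) and (I) follow the paper's architecture (diagonal realization of $T_F^{-1/2}$ in the Hermite basis for radial $F$, Schur/row-column bounds on matrix elements, and the angular Fourier decomposition linking the seminorms $\|\cdot\|_{M,K,N}$ to Hermite matrix-element decay), though the sketch of (I) compresses into one sentence what is the technical core of Section \ref{proofmain1} — the splitting of the phase-space integral into the regimes $|j-j'|^2\gtrless j$ and the stationary/non-stationary phase estimates of Proposition \ref{propmatesti} — and the sketch of (II) omits the estimate of $A[O,T_F^{-1/2}]$ via the decrement $G((j+\tfrac12)\hbar)^{-1/2}-G((j+\tfrac32)\hbar)^{-1/2}$, which is where the condition \eqref{condF} on $f'$ enters. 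The decisive gap, however, is the one in Part (III) described above.
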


\begin{proof}[Proof of Theorem \ref{main1}]\ 
Item \ref{m1} and Corollary \ref{armaarmacor} give immediately the first result in Theorem \ref{main1}. The second result is a corollary of the first thanks to Theorem \ref{ineqphinf}.
\end{proof}
\begin{proof}[Proof of Theorem \ref{main2}]\ 
Item \ref{m2} and Corollary \ref{armaarmacor} give again the first result in Theorem \ref{main}. The second result is a a corollary of the first thanks to Theorem \ref{ineqphinf} and Proposition \ref{propnest}.
\end{proof}
\begin{proof}[Proof of Theorem \ref{main3}]\ 
Finally, item \ref{m3}, Corollary \ref{armaarmacor} and Theorem \ref{ineqphinf} gives the statement.
\end{proof}
\begin{proof}[Proof of the corollary of Theorem \ref{main1}]\ 

 
 For $v\in\bR$ let us define $\{v\}$ as the smallest integer greater or equal to $v$.
 \begin{Lem}\label{lemdiff}\ 
 For any $M,K,N>0$, there exist $C_1>0$ such that
 \begin{eqnarray}\label{mkncn}
 \|a\|_{M,K,N}
 &\leq &
 C_1
 \sup_{\substack{(x,\xi)\in\bR^{2d}\\|\beta_1|,\dots,|\beta_d|\leq N+K}}
 (\xi^2+x^2+d)^{M+\{\frac K2\}}
 |\prod_{m=1}^dD_{(x,\xi)}^{\beta_m}  a(x,\xi)|\nonumber
 \end{eqnarray}
 \end{Lem}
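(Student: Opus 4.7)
The plan is to perform integration by parts in the angular variables $\theta=(\theta_1,\dots,\theta_d)$ appearing in the definition $a_k(z)=\int_{\bT^d} a(e^{i\theta}z)\,e^{ik\theta}\,d\theta$, converting the Fourier decay factors $(|k_l|+d)^K$ into Cartesian derivatives of $a$ at the cost of polynomial prefactors in $z_l$. The key identity is that, writing $z_l=(x_l,\xi_l)\in T^*\bR\simeq\bC$,
\begin{equation*}
\partial_{\theta_l}\bigl[a(e^{i\theta}z)\bigr]=(L_{z_l}a)(e^{i\theta}z),\qquad L_{z_l}:=x_l\partial_{\xi_l}-\xi_l\partial_{x_l},
\end{equation*}
where $L_{z_l}$ is the rotation generator in the $l$-th symplectic plane, a first-order differential operator whose coefficients are pointwise bounded by $(1+|z_l|^2)^{1/2}$.

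Bounding $(|k_l|+d)^K\leq C_{d,K}(1+k_l^2)^{\{K/2\}}$ and recognizing $(1+k_l^2)^{\{K/2\}}$ as the Fourier symbol of $(1-\partial_{\theta_l}^2)^{\{K/2\}}$ on the torus, I would integrate by parts $\{K/2\}$ times in each variable $\theta_l$ to get
\begin{equation*}
\prod_{l=1}^d (1+k_l^2)^{\{K/2\}}\,a_k(z)=\int_{\bT^d}e^{ik\theta}\prod_{l=1}^d\bigl(1-L_{z_l}^2\bigr)^{\{K/2\}}a(e^{i\theta}z)\,d\theta.
\end{equation*}
Each factor $(1-L_{z_l}^2)^{\{K/2\}}$ is a differential operator of order at most $2\{K/2\}$ acting on the $l$-th block only, with polynomial coefficients of degree at most $2\{K/2\}$ in $z_l$, hence bounded by $C(1+|z_l|^2)^{\{K/2\}}$. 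Differentiating under the integral with $\prod_m(\sqrt\hbar D_{z_m})^{\alpha_m}$, $|\alpha_m|\leq N$---and noting that $z_m\mapsto e^{i\theta_m}z_m$ is an orthogonal change of coordinates preserving sup norms of derivatives, while $\hbar^{|\alpha_m|/2}\leq 1$---one obtains mixed Cartesian derivatives of $a$ of order at most $N+2\{K/2\}$ per block, evaluated at points with the same blockwise magnitudes as $z$. Combining with the LHS weight $\prod_l(|z_l|^2+1)^M$ and using the pointwise bound $(1+|z_l|^2)\leq x^2+\xi^2+d$, the resulting estimate is absorbed into a constant multiple of $(x^2+\xi^2+d)^{M+\{K/2\}}\sup_{|\beta_m|\leq N+K}|\prod_m D^{\beta_m}a|$.

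The main obstacle is the combinatorial bookkeeping of the expansion of $\prod_l(1-L_{z_l}^2)^{\{K/2\}}$: tracking the number of terms produced, their derivative orders, and their polynomial coefficients with clean constants. The delicate step is passing from the naturally tensorized polynomial weight $\prod_l(|z_l|^2+1)^{M+\{K/2\}}$ to the single power $(x^2+\xi^2+d)^{M+\{K/2\}}$ on the RHS; one must retain the product structure throughout the estimate and argue termwise, exploiting that the supremum on the RHS is taken over all $(x,\xi)\in\bR^{2d}$ so each factor $(1+|z_l|^2)$ can be absorbed at the evaluation point rather than bounded by a crude product. A secondary technicality is the mismatch of size at most one between $2\{K/2\}$ and $K$ (when $K$ is not an even integer), which is absorbed via the slack in $(|k_l|+d)^K\leq C_{d,K}(1+k_l^2)^{\{K/2\}}$.
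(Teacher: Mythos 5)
Your proposal is correct and follows essentially the same route as the paper: both convert the factor $(|k_l|+d)^K$ into angular integrations by parts on $\bT^d$ via a second-order operator of the form $(1-\partial_{\theta_l}^2)^{\{K/2\}}$, identify $\partial_{\theta_l}$ acting on $a(e^{i\theta}z)$ with a first-order rotation vector field whose linear coefficients are absorbed into the weight $(1+|z_l|^2)^{\{K/2\}}$, and then commute the $(\sqrt\hbar D_z)^{\alpha}$ derivatives through the rotation. Your explicit use of $L_{z_l}=x_l\partial_{\xi_l}-\xi_l\partial_{x_l}$ and of $(1-\partial_{\theta}^2)$ (rather than the paper's $(\partial_t^2+1)$, which carries a sign typo) is, if anything, slightly cleaner, and the residual discrepancy you flag between $2\{K/2\}$ and $K$ in the derivative count is present in the paper's own argument as well.
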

 \begin{proof}
 Let us first remark that, with the defintion \eqref{baraka},
 \begin{eqnarray}
D_{z_m}^{\alpha_m} 
 a_k(z)&=&
 \int D_{z_m}^{\alpha_m} 
 a(e^{it}z) P(t)e^{ikt}dt
\end{eqnarray}
for a certain trigonometric polynomial $P$.

We have, for $K$ even,
\begin{eqnarray}
(k_l^2+1)^{\frac K2}
D_{z_m}^{\alpha_m} 
 a_k(z)&=&
 \int D_{z_m}^{\alpha_m} 
 a(e^{it}z) P(t)(\partial_t^2+1)^{\frac K2}e^{ikt}dt\nonumber\\
 &=&
 \int e^{ikt}(\partial_t+i)^{\frac K2}(\partial_t-i)^{\frac K2}(D_{z_m}^{\alpha_m} 
 a(e^{it}z) P(t))dt.\nonumber
\end{eqnarray}
Therefore, since $(|k_l|+1)^K\leq (k_l^2+1)^{\frac K2}$,
\begin{eqnarray}
&&\sup_{|\alpha_1|,\dots,|\alpha_d|\leq N}(|k_l|+d)^{K}
|\prod_{m=1}^dD_{z_m}^{\alpha_m}
 a_k(z)|
 \nonumber\\
 &\leq&
 \sup_{|\alpha_1|,\dots,|\alpha_d|\leq N}|(k_l^2+1)^{\frac K2}
\prod_{m=1}^dD_{z_m}^{\alpha_m} 
 a_k(z)|\nonumber\\
 &\leq&C
 \prod_{n=1}^d(|z_n|^2+1)^{\frac K2}\sup_{|\alpha_1|,\dots,|\alpha_d|\leq N+K}
 \int |\prod_{m=1}^dD_{z_m}^{\alpha_m} 
 a(e^{it}z)e^{ikt}|dt\nonumber\\
 &\leq&2\pi C
 (|z|^2+d)^{\frac K2}\sup_{\substack{|\alpha_1|,\dots,|\alpha_d|\leq N+K\\t\in [0,2\pi]}} |\prod_{m=1}^dD_{z_m}^{\alpha_m} 
 a(e^{it}z)|\label{profflemmmm}
\end{eqnarray} 
Since the family of norms $\|\cdot\|_{M,K,N}$ is non decreasing in $K$, we conclude by noticing that$\|\cdot\|_{M,K,N}\leq \|\cdot\|_{M,2\{\frac K2\},N}$, plugging \eqref{profflemmmm} in the definition \eqref{defmkn} and define $C_1=2\pi C$.
\end{proof}
Using Lemma \ref{lemdiff}, one sees easily that when $W_\hbar[\sqrt{R^{in}}]$ satisfies the hypothesis of the Corollary of Theorem \ref{main1}, the two hypothesis \eqref{tightwig} and \eqref{semicwig} of Theorem \ref{main1} are satisfied for $\mu(\hbar)=1$ and $\nu(\hbar)=\sqrt\hbar$ and the conclusion follows. 
\end{proof}
%
\vskip 1cm

\section{Proof of Theorem \ref{main} item \eqref{m3}}\label{proofmain3}
Let us start by a  lemma, interesting per se.
\begin{Lem}\label{verstriang}
Let $f$ be a probability density on 
$\bR^{2d}$ and  $R$ be a density operator on 
$L^2(\bR^d)$ 
satisfying
$$
\int_{\bR^{2d}}(p^2+q^2)f(p,q) dpdq<\infty\mbox{ and }
\Tr(-\hbar^2\Delta+x^2)R<\infty.
$$
The following inequality holds true:
\be\label{sqrtgen}
E_\hbar(\widetilde W[R],R)\leq
\sqrt{E_\hbar(f,R)^2+\frac{d\hbar}2}+E_\hbar(f,R)
\leq (\sqrt2+1)E_\hbar(f,R).
\ee
\end{Lem}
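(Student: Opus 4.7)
The plan is to combine the triangle inequality for $E_\hbar$ (Theorem \ref{armatriang}) with the Husimi-to-measure bound (item (3) of Theorem \ref{armaprop}), and then exhaust the lower bound $E_\hbar \ge \sqrt{d\hbar/2}$ to collapse the estimate.

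More precisely, first I would apply Theorem \ref{armatriang} with the two probability densities $\widetilde W[R]$ and $f$ and the density operator $R$ on the right, which gives
\[
E_\hbar(\widetilde W[R],R)\le \MKd(\widetilde W[R],f)+E_\hbar(f,R).
\]
(The hypothesis on second moments and on $\Tr((-\hbar^2\Delta+x^2)R)<\infty$ is exactly what is assumed; note that $\widetilde W[R]$ inherits finite second moments from the latter trace condition.) Next, I would control the Wasserstein term by the quantum--classical pseudo-distance via Theorem \ref{armaprop}(3) applied with $p=f$:
\[
\MKd(\widetilde W[R],f)^2\le E_\hbar(f,R)^2+\tfrac12 d\hbar.
\]
Combining these two ingredients yields the first inequality in \eqref{sqrtgen}:
\[
E_\hbar(\widetilde W[R],R)\le \sqrt{E_\hbar(f,R)^2+\tfrac12 d\hbar}+E_\hbar(f,R).
\]

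For the second inequality, I would invoke the lower bound \eqref{armaborninf}, namely $E_\hbar(f,R)^2\ge \tfrac12 d\hbar$, which gives $E_\hbar(f,R)^2+\tfrac12 d\hbar\le 2E_\hbar(f,R)^2$, whence
\[
\sqrt{E_\hbar(f,R)^2+\tfrac12 d\hbar}+E_\hbar(f,R)\le (\sqrt2+1)E_\hbar(f,R).
\]

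There is essentially no obstacle here: every step is an immediate consequence of a result already stated in the paper. The only small point to verify is that the hypotheses of Theorem \ref{armatriang} are met with the choice $g=f$ and $R_1=R$, which amounts to checking that $\widetilde W[R]$ has finite second moments; this follows from $\Tr((-\hbar^2\Delta+x^2)R)<\infty$ together with the identity $\int_{\bR^{2d}}(|x|^2+|\xi|^2)\widetilde W_\hbar[R](x,\xi)\,dxd\xi=\Tr((-\hbar^2\Delta+x^2)R)+d\hbar$ (the extra $d\hbar$ coming from the Gaussian convolution defining $\widetilde W_\hbar$).
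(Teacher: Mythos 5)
Your proof is correct and follows exactly the same route as the paper: the triangle inequality of Theorem \ref{armatriang} applied to $\widetilde W[R]$, $f$ and $R$, the bound $\MKd(\widetilde W[R],f)^2\le E_\hbar(f,R)^2+\tfrac12 d\hbar$ from Theorem \ref{armaprop}(3), and the lower bound $E_\hbar(f,R)^2\ge\tfrac12 d\hbar$ from Theorem \ref{armaprop}(1) to absorb the $\tfrac12 d\hbar$ term. Your remark verifying that $\widetilde W[R]$ has finite second moments is a useful detail the paper leaves implicit.
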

\begin{Cor}\label{corsqrtgen}
For any density operator satisfying 
$
\Tr(-\hbar^2\Delta+x^2)R<\infty,
$
$$
E_\hbar(\widetilde W[R],R)
\leq (\sqrt2+1)\inf_{g\in\cP(\bR^{2d})} E_\hbar(g,R).
$$
\end{Cor}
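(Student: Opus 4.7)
The plan is to apply Lemma \ref{verstriang} with $f=g$ for every admissible $g$ and then pass to the infimum. Since the corollary already assumes $\Tr(-\hbar^2\Delta + x^2)R < \infty$, the only hypothesis of Lemma \ref{verstriang} left to check is that the probability density have finite second moments.

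First, for any $g \in \cP(\bR^{2d})$ with $\int_{\bR^{2d}}(|x|^2 + |\xi|^2)\,g(x,\xi)\, dx d\xi < \infty$, Lemma \ref{verstriang} applied with $f = g$ yields directly
$$E_\hbar(\widetilde W[R], R) \leq (\sqrt{2}+1)\, E_\hbar(g, R).$$

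Next, for $g$ with infinite second moments, I would argue $E_\hbar(g, R) = +\infty$, so the inequality above is trivially satisfied. Concretely, for any $Q \in \cC(g, R)$, the expansion $\Tr(|x-y|^2 Q(x,\xi)) = |x|^2 g(x,\xi) - 2 x \cdot \Tr(y Q(x,\xi)) + \Tr(y^2 Q(x,\xi))$ combined with the Cauchy--Schwarz bound $|\Tr(y Q)|^2 \le g(x,\xi)\cdot \Tr(y^2 Q)$ gives
$$\int_{\bR^{2d}} \Tr(|x-y|^2 Q(x,\xi))\, dx d\xi \;\geq\; \Bigl(\sqrt{\textstyle\int |x|^2 g\, dx d\xi} - \sqrt{\Tr(y^2 R)}\Bigr)^2,$$
which is $+\infty$ whenever $\int |x|^2 g\, dx d\xi = +\infty$, since $\Tr(y^2 R) < \infty$ by hypothesis. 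The analogous argument with $|\xi + i\hbar\nabla_y|^2$ (using self-adjointness of $-i\hbar\nabla_y$ and the hypothesis $\Tr(-\hbar^2\Delta_y R) < \infty$) handles the $\xi$-variable and shows that the full transport cost diverges.

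Combining both cases, the inequality $E_\hbar(\widetilde W[R], R) \le (\sqrt{2}+1)\, E_\hbar(g, R)$ holds for \emph{every} $g \in \cP(\bR^{2d})$, and taking the infimum over $g$ yields the claim. I expect no real obstacle here: the substantive content is entirely in Lemma \ref{verstriang}, and the corollary is just its optimized reformulation. The only minor point is the need to discard densities without finite second moments when computing the infimum, which is taken care of by the Cauchy--Schwarz estimate above.
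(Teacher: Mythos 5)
Your proof is correct and follows essentially the same route as the paper: the corollary is obtained by applying Lemma \ref{verstriang} with $f=g$ and taking the infimum over $g$. Your additional Cauchy--Schwarz argument showing that $E_\hbar(g,R)=+\infty$ whenever $g$ lacks finite second moments (so that such $g$ may be safely included in the infimum over all of $\cP(\bR^{2d})$) is a valid and worthwhile point of rigor that the paper leaves implicit.
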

\begin{proof}
By Theorem \ref{armaprop} (3),
$$
\MKd(\widetilde W[R,f)^2\leq E_\hbar(f,R)^2+\frac{d\hbar}2
$$
and \eqref{sqrtgen} follows 
by Theorem \ref{armatriang} and Theorem \ref{armaprop} (1).
\end{proof}
\begin{Rmk}\label{rmksqrtgen}
The meaning of Corollary  \ref{corsqrtgen} is the following: thought $\widetilde W_\hbar[R]$ might not be the  classical density ``closets" to $R$, it belongs to a ``ball" of radius $(\sqrt2+1)$ around it.
\end{Rmk}
Let us come back tp the proof of Theorem \ref{main} item \eqref{m3} and  suppose  satisfied Hypothesis \ref{tight} and \ref{semic} in Theorem \ref{main3} . 

Let $F$  be given by Hypothesis \ref{tight} and  
{\bcr let us define, for each $0<\epsilon\leq 1$ 
$$
F_\epsilon(x,\xi)=(\epsilon (x^2+\xi^2)+1)^{-1}F(x,\xi)
$$
so that $F_\epsilon$ has finite second moments. }

Let $Q_0(x,\xi):=F_\epsilon(x,\xi)|x,\xi\rangle\langle x,\xi|$. By Corollary \ref{armapropcor}, $Q_0$ is a minimal coupling of $F_\epsilon$ and $\mbox{OP}^T_\hbar({\color{black}(2\pi\hbar)^d}F_\epsilon)$.
 
 Let us denote $A:=\sqrt{R^{in}}$ and $T_{F_\epsilon}:=\mbox{OP}^T_\hbar({\color{black}(2\pi\hbar)^d}F_\epsilon)$, and let
$$
Q(x,\xi)=AT_{F_\epsilon}^{-1/2}Q_0(x,\xi)T_{F_\epsilon}^{-1/2}A.
$$
 By the hypothesis of boundedness of $AT_{F_\epsilon}^{-1/2}$ and $T_{F_\epsilon}^{-1/2}A$, $Q$ is a coupling between $R^{in}$ and $f_A(x,\xi):={F_\epsilon}(x,\xi)\langle x,\xi|T_{F_\epsilon}^{-1/2}A^2T_{F_\epsilon}^{-1/2}|x,\xi\rangle$.
Therefore, by 
Lemma \ref{verstriang},
\be\label{sqrt}
E_\hbar(\widetilde W[R_\hbar],R_\hbar)\leq
\sqrt{E_\hbar(f_A,R_\hbar)^2+\frac{d\hbar}2}+E_\hbar(f_A,R_\hbar).
\ee
So the problem of estimating $E_\hbar(\widetilde W_\hbar[R^{in},R^{in})$
reduces
to estimating $E_\hbar(f_A,R_\hbar)$.
\vskip 0.5cm
\begin{Lem}\label{lemcost}
$$
c(x,\xi)Q_0(x,\xi)=\frac{d\hbar}2 Q_0(x,\xi).
$$
\end{Lem}
\begin{proof}
Denoting by $T^\hbar_{x,\xi},x,\xi\in\bR^d$ the Weyl operators acting on any function 
$\psi\in L^2(\bR^d)$ by
$
T^\hbar_{x,\xi}\psi(y)=\psi(y-x)e^{i(\xi.(y-x/2)}
$
(see e.g. \cite{FGPaulCohSta}, Section 1)
, we have
\begin{eqnarray}
c(x,\xi)Q_0(x,\xi)&=&
T(x,\xi)c(0,0)T(x,\xi)^{-1}Q_0(x,\xi)\nonumber\\
&=&
T(x,\xi)c(0,0)Q_0(0,0)T(x,\xi)^{-1}\nonumber\\
&=&
T(x,\xi)\frac{d\hbar}2 Q_0(0,0)T(x,\xi)^{-1}\nonumber\\
&=&
\frac{d\hbar}2 Q_0(x,\xi).\nonumber
\end{eqnarray}
\end{proof}
Using Lemma \ref{lemcost}, we get
\begin{eqnarray}
E_\hbar(f_A,R)^2&\leq&
\int dxd\xi\Tr{\left(c(x,\xi)Q(x,\xi)\right)}\nonumber\\
&=&
\int dxd\xi\Tr{\left(c(x,\xi)AT_{F_\epsilon}^{-1/2}Q_0(x,\xi)T_{F_\epsilon}^{-1/2}A\right)}\nonumber\\
&=&
\int dxd\xi\Tr{\left(AT_{F_\epsilon}^{-1/2}c(x,\xi)Q_0(x,\xi)T_{F_\epsilon}^{-1/2}A\right)}\nonumber\\
&+&
\int dxd\xi\Tr{\left([c(x,\xi),AT_{F_\epsilon}^{-1/2}]Q_0(x,\xi)T_{F_\epsilon}^{-1/2}A\right)}\nonumber\\
&=&
\frac{d\hbar}2\int dxd\xi\Tr{Q(x,\xi)}\nonumber\\
&+&
\int dxd\xi
\Tr{\left(T_{F_\epsilon}^{-1/2}A[2\xi.y+2 x.\nabla_y+y^2+\Delta_y,AT_{F_\epsilon}^{-1/2}]Q_0(x,\xi)\right)}\nonumber\\
&\leq&
\frac{d\hbar}2\nonumber\\
&+&
\int dxd\xi
\|T_{F_\epsilon}^{-1/2}A[2\xi.y+2 x.\nabla_y+y^2+\Delta_y,AT_{F_\epsilon}^{-1/2}]\|
\Tr{Q_0(x,\xi)}\nonumber\\
&=&
\frac{d\hbar}2+
\int dxd\xi
\|T_{F_\epsilon}^{-1/2}A[2\xi.y+2 x.\nabla_y+y^2+\Delta_y,AT_{F_\epsilon}^{-1/2}]\|
F(x,\xi)\nonumber\\
&\leq&
\frac{d\hbar}2+
\int(1+|x|+|\xi|){F_\epsilon}(dx,d\xi)
\sum_{O\in\Omega}\|T_{F_\epsilon}^{-1/2}A[O,AT_{F_\epsilon}^{-1/2}]\|\nonumber
\\
&=&
\frac{d\hbar}2+
\tau(\hbar)\int(1+|x|+|\xi|){F_\epsilon}(dx,d\xi)
\label{der}.
\end{eqnarray}

We conclude by using \eqref{der} and \eqref{sqrt}:

\begin{eqnarray}\label{sqrtdh}
E_\hbar(\widetilde W[R_\hbar],R_\hbar)&\leq&
2\sqrt{
\tau(\hbar)\int(1+|x|+|\xi|){F_\epsilon}(dx,d\xi)+{d\hbar}}
\nonumber
\leq 
2D_{\ref{m3}}(F)\max{(\sqrt\hbar,\sqrt{\tau(\hbar)})}.
\end{eqnarray} 
with, since ${F_\epsilon}\leq F$,
\be\label{dm3}
D_{\ref{m3}}(F)=2\max{(\sqrt{\int(1+|x|+|\xi|)F(dx,d\xi)},\sqrt d)}.
\ee
\vskip 1cm

\section{Proof of Theorem \ref{main} item \eqref{m2}}\label{proofmain2}
{\bcr We first notice the following elementary result.
\begin{Lem}\label{lemlem}
Let $\rho(\hbar),\tau'(\hbar)$ be two functions such that $\rho(\hbar)\tau'(\hbar)=o(1)$ as $\hbar\to 0$, and let $\sqrt{R^{in}}$ satisfy the two following hypothesis
\vskip 1cm 
(\ref{tight}')\centerline{$||T_F^{-1/2}\sqrt{R^{in}}||,\ ||\sqrt{R^{in}}T_F^{-1/2}||\leq 
\rho(\hbar)
$} 


(\ref{semic}')\centerline{$\sup\limits_{O\in\Omega}||[O,\sqrt{R^{in}}T_F^{-1/2}||\leq \tau'(\hbar).
$}
Then  \eqref{tight} and \eqref{semic} in Theorem \ref{main3} hold true with
$$
\tau(\hbar)=\rho(\hbar)\tau'(\hbar).
$$
\end{Lem}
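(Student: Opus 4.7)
The proof of Lemma \ref{lemlem} should be essentially immediate; the main content is to unpack the definitions and apply the sub-multiplicativity of the operator norm. Below is the plan.

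First, I would observe that hypothesis (\ref{tight}') is a strictly stronger quantitative form of the qualitative boundedness required by \eqref{tight} in Theorem \ref{main3}: the assumption $\|T_F^{-1/2}\sqrt{R^{in}}\|, \|\sqrt{R^{in}}T_F^{-1/2}\| \le \rho(\hbar)$ in particular asserts that these compositions extend as bounded operators on $L^2(\bR^d)$, which is exactly what \eqref{tight} demands. So \eqref{tight} is verified with no extra work.

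Next I would turn to \eqref{semic}. For any $O \in \Omega$, by sub-multiplicativity of the operator norm,
\begin{equation*}
\bigl\|T_F^{-1/2}\sqrt{R^{in}}\bigl[O, \sqrt{R^{in}}T_F^{-1/2}\bigr]\bigr\|
\le \bigl\|T_F^{-1/2}\sqrt{R^{in}}\bigr\| \cdot \bigl\|[O, \sqrt{R^{in}}T_F^{-1/2}]\bigr\|
\le \rho(\hbar)\,\tau'(\hbar).
\end{equation*}
Taking the supremum over $O \in \Omega$ and setting $\tau(\hbar) := \rho(\hbar)\tau'(\hbar)$, one obtains precisely the bound required in \eqref{semic}. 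The assumption $\rho(\hbar)\tau'(\hbar) = o(1)$ as $\hbar \to 0$ ensures $\tau(\hbar) = o(1)$, as needed by the hypothesis of Theorem \ref{main3}.

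There is no substantive obstacle: the lemma merely factors the bilinear quantity appearing in \eqref{semic} into one bounded factor (controlled by (\ref{tight}')) and one small factor (controlled by (\ref{semic}')). The only thing worth being slightly careful about is the domain question, namely that the product $T_F^{-1/2}\sqrt{R^{in}}[O,\sqrt{R^{in}}T_F^{-1/2}]$ makes sense as a bounded operator; this is guaranteed because (\ref{tight}') and (\ref{semic}') each provide bounded extensions of the respective factors to all of $L^2(\bR^d)$, so that their composition is automatically a bounded operator on $L^2(\bR^d)$.
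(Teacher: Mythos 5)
Your proof is correct and is exactly the intended argument: the paper states this lemma as an "elementary result" and omits the proof, and the only content is what you supply — (\ref{tight}') gives the bounded extensions required by \eqref{tight}, and sub-multiplicativity of the operator norm factors the quantity in \eqref{semic} as $\|T_F^{-1/2}\sqrt{R^{in}}\|\cdot\|[O,\sqrt{R^{in}}T_F^{-1/2}]\|\le\rho(\hbar)\tau'(\hbar)=o(1)$. Your remark on the composition being well defined as a bounded operator is the right precaution; nothing is missing.
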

}

\vskip 0.7cm

By Lemma \ref{lemlem}, Theorem \ref{main} item \eqref{m2} is  a consequence of the following result.
\begin{Prop}\label{main2soft}
{\bcr Let us suppose that $R_\hbar$ satisfies the hypothesis 
of Theorem \ref{main2} for some $\mu', \nu', \epsilon$. Then $R_\hbar$ satisfies (\ref{tight}') and (\ref{semic}') for 
any function $F$ of the form
$
F(z_1,\dots,z_d)=f(x^2_1+\xi^2_1)\dots f(x^2_d+\xi^2_d)$ with $f$ satisfying \eqref{condF} below 
and $\rho$ and $\tau'$ are given respectively by \eqref{mutorho} and \eqref{munutotau}.
}
\end{Prop}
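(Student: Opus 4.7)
The plan is to exploit the fact that for $F(z)=f(x_1^2+\xi_1^2)\cdots f(x_d^2+\xi_d^2)$, the T\"oplitz operator $T_F:=\Op^T_\hbar((2\pi\hbar)^dF)$ is diagonal in the tensor-product Hermite basis $\{H_j\}_{j\in\bN^d}$, with $T_FH_j=\prod_{l=1}^d\lambda_{j_l}(f)\,H_j$, where
$$
\lambda_k(f):=\int_{\bR^2}f(x^2+\xi^2)\,|\langle(x,\xi),\hbar|h_k\rangle|^2\,dx\,d\xi
$$
is a one-variable weighted moment of $f$ against the Husimi density of $h_k$, which concentrates at $x^2+\xi^2\sim\hbar(2k+1)$. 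The condition \eqref{condF} on $f$ must therefore be designed so as to guarantee both $\lambda_k(f)^{-1/2}\lesssim(\hbar k+\tfrac12)^{3/4+\epsilon}$ (so as to cancel the decay appearing in hypothesis (1) of Theorem \ref{main2}) and the discrete-derivative bound $|\lambda_{k+1}(f)^{-1/2}-\lambda_k(f)^{-1/2}|\lesssim\sqrt\hbar\,(\hbar k+\tfrac12)^{1/2+\epsilon}$ needed for the commutator estimate below.

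Given this diagonalization, (\ref{tight}') reduces to Schur's test in the Hermite basis: from $(H_i,T_F^{-1/2}\sqrt{R^{in}}H_j)=\lambda_i(F)^{-1/2}(H_i,\sqrt{R^{in}}H_j)$ and hypothesis (1) of Theorem \ref{main2}, both row and column sums decouple into $d$-fold products of one-variable sums $\sum_{j_l\in\bN}(\hbar j_l+\tfrac12)^{-3/4-\epsilon}(|i_l-j_l|+1)^{-2-\epsilon}$, each finite uniformly in $i_l$ since $\epsilon>0$. The bound on $\lambda_{i_l}(f)^{-1/2}$ cancels the matching factor in the column sum, and collecting constants with the $(2\pi\hbar)^{d/2}$ prefactor yields the $\rho(\hbar)$ of \eqref{mutorho}; the bound on $\|\sqrt{R^{in}}T_F^{-1/2}\|$ follows by taking adjoints, $\sqrt{R^{in}}$ being self-adjoint.

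For (\ref{semic}'), I split via Leibniz $[O,\sqrt{R^{in}}T_F^{-1/2}]=[O,\sqrt{R^{in}}]T_F^{-1/2}+\sqrt{R^{in}}[O,T_F^{-1/2}]$. For $O\in\Omega_1=\{y_l,\pm\hbar\partial_{y_l}\}$ the first term is estimated as for (\ref{tight}') but with hypothesis (2) of Theorem \ref{main2} in place of (1), which produces the factor $\nu'(\hbar)$; the second term uses that such $O$ are tridiagonal in the Hermite basis with matrix entries of order $\sqrt{\hbar j_l}$, so
$$
[O,T_F^{-1/2}]_{ij}=O_{ij}\bigl(\lambda_j(F)^{-1/2}-\lambda_i(F)^{-1/2}\bigr)
$$
vanishes except on nearest neighbours, and the finite-difference bound from \eqref{condF} extracts the required $\sqrt\hbar$ gain. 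Writing $\sqrt{R^{in}}[O,T_F^{-1/2}]=(\sqrt{R^{in}}T_F^{-1/2})\,T_F^{1/2}[O,T_F^{-1/2}]$ and invoking the already-proved (\ref{tight}') together with an elementary Schur estimate on $T_F^{1/2}[O,T_F^{-1/2}]$ closes this term. The remaining case $O=-\hbar^2\Delta+|y|^2\in\Omega$ is handled by exact diagonality of $O$ in the Hermite basis (so $[O,T_F^{-1/2}]=0$) and a direct application of hypothesis (1): the matrix element $(H_i,[O,\sqrt{R^{in}}]H_j)=2\sum_l(i_l-j_l)(H_i,\sqrt{R^{in}}H_j)$ loses one power of $(|i_l-j_l|+1)$ in a single coordinate, which the exponent $-2-\epsilon$ in hypothesis (1) can afford.

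The main obstacle will be pinning down the exact form of \eqref{condF} so that a single integrability-and-smoothness assumption on $f$ produces both the inverse-eigenvalue bound and the discrete-derivative bound with the quantitative constants needed to match the stated formulas \eqref{mutorho} and \eqref{munutotau}; this rests on a careful analysis of $\lambda_k(f)$, which is morally a Gaussian convolution of $f$ at scale $\hbar$, together with uniform control of the low-$k$ regime where the Husimi density of $h_k$ is not yet sharply localized.
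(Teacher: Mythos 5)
Your proposal follows essentially the same route as the paper: diagonalize $T_F=\Op^T_\hbar((2\pi\hbar)^dF)$ in the Hermite basis via harmonic-oscillator invariance (the paper computes the eigenvalues $G((j+\tfrac12)\hbar)$ explicitly as a Laplace-type integral of $f$, which is your $\lambda_k(f)$), establish the inverse-eigenvalue lower bound and the nearest-neighbour finite-difference bound from \eqref{condF} (Lemma \ref{puissnace}, with $\nu=\tfrac32+2\epsilon$ so that $\nu/2=\tfrac34+\epsilon$ matches hypothesis (1)), and then run the Schur test \eqref{lemds} for (\ref{tight}') and the Leibniz split \eqref{semicla} for (\ref{semic}'), treating the ladder operators as tridiagonal and using $[\cH_l,T_F^{-1/2}]=0$ for the quadratic element exactly as you describe. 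The only cosmetic difference is your factorization $\sqrt{R^{in}}[O,T_F^{-1/2}]=(\sqrt{R^{in}}T_F^{-1/2})\,T_F^{1/2}[O,T_F^{-1/2}]$, where the paper instead bounds the matrix elements of $\sqrt{R^{in}}[O_l,T_F^{-1/2}]$ directly from hypothesis (1) combined with the finite-difference estimate; both yield the same $\hbar\mu'(\hbar)$ contribution.
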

\vskip 1cm

\begin{proof}
The proof of Proposition \ref{main2soft} is split in three steps. In Section \ref{constfmoinsundemi} we will first realize  $T_F^{-\frac12}$, for $F$ satisfying \eqref{condF} below, as a diagonal operator  on the Hermite basis. 
This will simplify the computation of the matrix elements and therefore the norms of the operators arising in  the hypothesis 
(\ref{tight})'  
and \eqref{semic}' 
in Lemma \ref{lemlem} out of the estimates of the matrix elements provided by hypothesis \eqref{iaj} and \eqref{ioaj} in Theorem \ref{main2}.This is done in Sections \ref{hypone} and \ref{hyptwo} respectively.

\subsection{Construction of $T_F^{-\frac12}$}\label{constfmoinsundemi}
Let us take $F$ in the form $F=f(x^2_1+\xi^2_1)\dots f(x^2_d+\xi^2_d)$. 
In other words, calling $h_0(x,\xi)=(\xi^2+x^2)/2$ defined on  $T^*(\bR)$, $F=f(h_0)^{\otimes d}$. It is easy to see that, since $|x,\xi\ra\la x,\xi|$ evolves by conjugation by the quantum flow of the harmonic oscillator by the action of the classical underlying flow on $(x,\xi)$, $\Op^T_\hbar({\color{black}(2\pi\hbar)^d}F)$  is  invariant by conjugation by the quantum flow of one dimensional harmonic oscillators $\cH_0$ as soon as $F$ is invariant by the action of the corresponding classical flow. 

Therefore $\Op^T_\hbar({\color{black}(2\pi\hbar)^d}F)=G(\cH_0)^{\otimes d}$ for some $G$
and, by \eqref{defher} and after denoting  $I=(1,\dots,1)\in\bN^d$,
$$
\mbox{OP}^T_\hbar({\color{black}(2\pi\hbar)^d}F)={\color{black}(2\pi\hbar)^d}\sum\limits_{j\in\bN^d}G^{\otimes d}((j+\tfrac12I)\hbar)|H_j\rangle\langle H_j|
=\big(
\sum\limits_{j\in\bN}{\color{black}(2\pi\hbar)}G
((j+\tfrac12)\hbar)|h_j\rangle\langle h_j|
\big)^{\otimes d}
.
$$ 
When there is no confusion we will drop the notation $I$ and write  for any $j\in\bN^d$,
$$
j+\tfrac12I=j+\tfrac12.
$$

The function $G$ can be evaluated on the spectrum of $\cH_0$ by the following argument: let $z'\in\bC$, $j\in\bN$ and $|z',\hbar\rangle$ be defined by \eqref{defetc}. By a simple computation using the generating function of the Hermite polynomials (see Section \ref{hermitegauss}) we easily find that
\be\label{zprimej}
\langle z'|h_j\rangle
=
\frac{{z'}^j}{\sqrt{j!\hbar^j}}e^{-\frac{|z'|^2}{2\hbar}},
\ee 
so that
\be\label{zprimez}
\langle z'|z\rangle
=\sum_{j\in\bN}\langle z'|h_j\rangle\langle h_j|z\rangle
=
e^{\frac{z'\bar z}{\hbar}}
e^{-\frac{|z|^2}{2\hbar}}
e^{-\frac{|z'|^2}{2\hbar}},
\ee  Then
\begin{eqnarray}
G((j+1/2)\hbar)\langle z'|h_j\rangle&=&\langle z'|G(\cH_0)|h_j\rangle=
\langle z'|\mbox{OP}^T_\hbar(f(h_0))|h_j\rangle\nonumber\\
(\mbox{by \eqref{zprimej}})\ \ \ &=&
\int  \langle z'|f(z\bar z)|z\rangle 
\frac{\bar{z}^j}{\sqrt{j!\hbar^j}}e^{-\frac{|z|^2}{2\hbar}}
dzd\bar z/(2\pi\hbar)
\nonumber\\
&=&
\int f(z\bar z) \langle z'|z\rangle 
\frac{\bar{z}^j}{\sqrt{j!\hbar^j}}e^{-\frac{|z|^2}{2\hbar}}
dzd\bar z/(2\pi\hbar)
\nonumber\\
(\mbox{by \eqref{zprimez}})\ \ \ &=&
\frac1\hbar 
\frac{{z'}^j}{\sqrt{j!\hbar^j}}e^{-\frac{|z'|^2}{2\hbar}}
\int_0^\infty f(\rho^2)\frac{\rho^{2j}}{j!\hbar^j}e^{-\rho^2/\hbar}\rho d\rho\nonumber\\
(\mbox{by \eqref{zprimej} again})\ \ \ &=&
\langle z'|h_j\rangle
\frac1\hbar 
\int_0^\infty f(\rho^2)\frac{\rho^{2j}}{j!\hbar^j}e^{-\rho^2/\hbar}\rho d\rho\nonumber
\end{eqnarray}
so that
\be\label{goutoff}
G((j+1/2)\hbar)=\int_{\bR^+} f(\rho^2)\frac{(\rho^2/\hbar)^j}{j!\hbar}e^{-\rho^2/\hbar}\rho d\rho,\ \ \    
\ee

In particular, since $0\neq f\geq 0$, $G((j+1/2)\hbar)>0$. 
Therefore $\mbox{OP}^T_\hbar(F)^{-1/2}$ can be defined  
as an unbounded operator by
\be\label{definv}
\mbox{OP}^T_\hbar({\color{black}(2\pi\hbar)^d}F)^{-1/2}|h_j\ra=
(({\color{black}(2\pi\hbar)}G)^{-1/2})^{\otimes d}((j+1/2)\hbar)
|h_j\ra,\ \forall j\in\bN^d.
\ee

\vskip 1cm
\begin{Rmk}\label{gauss}
Note that, when $f\in C^2(\bR)$, the stationary phase lemma gives that $G(A)\sim f(A)$ as $ j\to\infty,\hbar\to 0,(j+1/2)\hbar\to A<\infty$. Moreover, \eqref{goutoff} can be inverted by
$$
\hbar\sum_{j\in\bN}G((j+1/2)\hbar)(1-\lambda)^j=\mathbb L(f)(\lambda),
$$
where $\mathbb L$ is the Laplace transform.

Note also that $G$ is bounded since ${(\rho^2/\hbar)^j}e^{-\rho^2/\hbar}\leq j^je^{-j}\leq j!$ and $f$ is integrable, in fact $G\leq 1/\hbar$.

Note finally that the Gaussian case is explicitly computable.
\end{Rmk}
\vskip 1cm
\begin{Lem}\label{puissnace}

Let us suppose there exist {\bcr $\nu> \tfrac32$},  and  $C_1,C_0>0$ such that, 
\be\label{condF}
\left\{
\begin{array}{ccl}
- 2C_0(\rho^{2}+1)^{-\nu-1}&\leq &f'(\rho^2)\leq 0\\
C_1(\rho^2+1)^{-\nu}
&\leq& f(\rho^2).\
\end{array}
\right.
\ee

Then, for all $A\in \bR^+$
\be\label{sup1}
C'(A+1)^{-\nu}\leq G(A)
\ee
where $C'$ is given by 
\eqref{sup3} below.

Moreover
\begin{eqnarray}\label{diff}
G((j+1/2)\hbar)-G((j+3/2)\hbar)&=&-\hbar \int_0^\infty f'(\rho^2)\frac{(\rho^2/\hbar)^{j+1}}{(j+1)!\hbar}e^{-\rho^2/\hbar}\rho d\rho\nonumber\\
&\leq&
 4^{\nu+1}\hbar C_0(j\hbar+1/2))^{-(\nu+1)}.
\end{eqnarray}
\end{Lem}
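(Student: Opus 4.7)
The plan is to reformulate $G$ as an expectation against a Gamma distribution. The substitution $t=\rho^2/\hbar$ (so $\rho\,d\rho=\tfrac{\hbar}{2}\,dt$) recasts the defining integral as
\[
G((j+\tfrac12)\hbar)=\tfrac12\int_0^\infty f(\hbar t)\,\tfrac{t^je^{-t}}{j!}\,dt,
\]
an expectation of $\tfrac12 f(\hbar T_j)$ for $T_j\sim\Gamma(j+1,1)$. For the lower bound \eqref{sup1}, I would apply the hypothesis $f(\hbar t)\geq C_1(\hbar t+1)^{-\nu}$ together with Jensen's inequality for the convex function $t\mapsto(\hbar t+1)^{-\nu}$; since $\mathbb{E}[T_j]=j+1$, this yields $G(A)\geq\tfrac{C_1}{2}(\hbar(j+1)+1)^{-\nu}$. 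An elementary comparison $\hbar(j+1)+1\leq\tfrac32(A+1)$ (valid because $A+1\geq 1$) then gives \eqref{sup1} with $C'=\tfrac{C_1}{2}(\tfrac23)^\nu$.

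For the equality in \eqref{diff}, the key identity is
\[
\partial_t\!\Bigl(\tfrac{t^{j+1}e^{-t}}{(j+1)!}\Bigr)=\tfrac{t^je^{-t}}{j!}-\tfrac{t^{j+1}e^{-t}}{(j+1)!}.
\]
Writing the difference of $G$'s as the integral of this derivative against $f(\hbar t)$, I would integrate by parts in $t$ (the boundary terms vanish because of the double zero at $t=0$ and the exponential decay at infinity) and then revert to the $\rho$ variable to obtain the stated formula. To deduce the inequality, substitute $-f'(\rho^2)\leq 2C_0(\rho^2+1)^{-\nu-1}$; this reduces the problem to showing
\[
J:=\mathbb{E}\bigl[(\hbar T_{j+1}+1)^{-\nu-1}\bigr]\leq 4^{\nu+1}(j\hbar+\tfrac12)^{-(\nu+1)}\qquad(T_{j+1}\sim\Gamma(j+2,1)).
\]
I would split on the size of $j\hbar$. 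In the small regime $j\hbar\leq\tfrac72$, the right-hand side is $\geq 1$ so the trivial bound $J\leq 1$ already suffices. In the large regime $j\hbar>\tfrac72$ (which forces $j\geq 4$, hence $j>\nu-1$ in the range of $\nu$ one controls), the pointwise estimate $(\hbar t+1)^{-\nu-1}\leq(\hbar t)^{-\nu-1}$ combined with the explicit Gamma moment $\mathbb{E}[T_{j+1}^{-(\nu+1)}]=\Gamma(j+1-\nu)/\Gamma(j+2)$ and the comparison $\Gamma(j+2)/\Gamma(j+1-\nu)\geq((j+1)/2)^{\nu+1}$ closes the estimate against $(j\hbar+\tfrac12)^{-(\nu+1)}$.

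The main obstacle is capturing the explicit constant $4^{\nu+1}$ in $J$: neither the trivial bound nor the moment bound is sharp uniformly in $(j,\hbar,\nu)$, so a careful partition of the parameter range is required, with attention to where the two regimes meet. When $\nu$ is large relative to $j$, the direct Gamma-ratio comparison degrades, and one would then reduce to the nearest integer power $m=\lceil\nu+1\rceil$ via Jensen applied to the concave map $x\mapsto x^{(\nu+1)/m}$. This technicality aside, the overall structure of the proof follows mechanically from the change of variables and the integration-by-parts identity.
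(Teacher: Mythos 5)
Your proof is correct in substance but follows a genuinely different route from the paper's, most visibly for the lower bound \eqref{sup1}. The paper splits into three regimes ($j\hbar$ large, where it compares $f(\rho^2)\ge C\rho^{-2\nu}$ against a shifted Gamma density and controls the tail by Stirling; $j=0$, done directly; and $0<j\hbar\lesssim 1$, handled by a Laplace-type localization of the integral near $\lambda=A$), whereas your single application of Jensen's inequality to the convex map $t\mapsto(\hbar t+1)^{-\nu}$ under the $\Gamma(j+1,1)$ law gives $G\ge\tfrac{C_1}2(\hbar(j+1)+1)^{-\nu}$ in one line and is both shorter and uniform in $j$. The price is a different explicit constant: you get $C'=\tfrac{C_1}2(\tfrac23)^\nu$ rather than the value $C'=C_1/(2\pi e^{1/8})$ of \eqref{sup3}, which is used verbatim downstream (in \eqref{supj}, \eqref{mutorho}, \eqref{munutotau}); this is a bookkeeping discrepancy, not a gap, but those constants would have to be propagated. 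For \eqref{diff}, your integration by parts is the same as the paper's (just performed in the variable $t=\rho^2/\hbar$), while your bound on the resulting integral via the explicit negative moment $\mathbb{E}[T_{j+1}^{-(\nu+1)}]=\Gamma(j+1-\nu)/\Gamma(j+2)$ replaces the paper's splitting of the $\rho$-integral at a threshold $\eta$ and its $\min(\cdot,\cdot)$ estimate. Your two-regime split at $j\hbar=\tfrac72$ closes correctly (the trivial bound $J\le1$ suffices below, and the Gamma-ratio comparison $\Gamma(j+2)/\Gamma(j+1-\nu)\ge((j+1)/2)^{\nu+1}$ together with $j\hbar+\tfrac12\le2\hbar(j+1)$ suffices above), with the caveat you yourself flag that the moment formula requires $j>\nu-1$; since $j\hbar>\tfrac72$ and $\hbar\le1$ force $j\ge4$, this covers the range $\nu=\tfrac32+2\epsilon\le\tfrac72$ actually used in the paper, and the paper's own proof is subject to essentially the same implicit restriction.
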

Note that the condition on $\nu$ is compatible with the finiteness of 
$\int(|x|+|\xi|)F(dx,d\xi)$. 
\begin{proof}
We first remark that $C_1(\rho^2+1)^{-\nu}$ implies that, for all $\alpha>0$ and
 $C=C_1(\frac{1+\alpha}\alpha)^\nu$.
\begin{eqnarray}
C\rho^{-2\nu}&\leq& f(\rho^2),\ \ \ \ \rho^2\geq \alpha\nonumber\\
C_1&\leq&f(\rho^2).\ \ \ \ \rho^2< \alpha\nonumber
\end{eqnarray}

Since $f>0$ we have, for $j\geq \nu$, by \eqref{condF}
\begin{eqnarray}
G((j+\tfrac12)\hbar)&\geq&
\int_{\alpha}^\infty
f(\rho^2)\frac{(\rho^2/\hbar)^j}{j!\hbar}e^{-\rho^2/\hbar} d(\rho^2/2)\nonumber\\
&\geq&
\int_{\alpha}^\infty
\frac C{\hbar^\nu j(j-1)\dots(j-\nu+1)}\frac{(\rho^2/\hbar)^{j-\nu}}{(j-\nu)!\hbar}e^{-\rho^2/\hbar}d(\rho^2/2)\nonumber\\
&\geq&
C(j\hbar)^{-\nu}\big(1-\int_0^{\frac\alpha\hbar} \frac{x^{j-\nu}e^{-x}}{(j-\nu)!}dx\big)\nonumber\\
j'=j-\nu\ \ \ 
&\geq&
C(j\hbar)^{-\nu}\big(1-(\alpha/\hbar)^{j'}(j')^{-j'}e^{j'}\int_0^\infty e^{-x}dx\big)\hskip 1cm\mbox{(by Stirling)}\nonumber\\
&\geq&
C(j\hbar)^{-\nu}\big(1-e^{j'(1+\log{\frac\alpha{j'\hbar}})}\big)\nonumber\\
&\geq&
C(j\hbar)^{-\nu}\big(1-\frac1e\big)
\mbox{ for }  j\hbar\geq e^2\alpha+\nu\hbar.
\nonumber\\
&\geq&
C\tfrac{e-1}e((j+\tfrac12)\hbar+1)^{-\nu}
\nonumber
\end{eqnarray}
For $j=0$ we have that
\begin{eqnarray}
G(\hbar/2)&=&
\frac12
\int_{\bR^+}
f(\lambda)
e^{-\frac\lambda\hbar}
\frac{d\lambda}\hbar
=
\frac12
\int_{0}^1
f(\hbar\lambda)
e^{-\lambda}
{d\lambda}
\geq\frac{C_1}{2e}
:=C''
\nonumber
\end{eqnarray}
For $0<A:=j\hbar\leq e^2\alpha+\nu\hbar$, we get, since 
 $j!\leq K\sqrt{2\pi}j^j\sqrt je^{-j}$
  for some $K>1$, 
\begin{eqnarray}
G((j+1/2)\hbar)=G(A+\hbar/2)&\geq&\frac{e^{\frac A\hbar}}{2K\sqrt{2\pi}\sqrt{A}}\int_{\bR^+}
f(\lambda)e^{-\frac{\lambda-A\log{(\frac\lambda A)}}\hbar}\frac{d\lambda}{\sqrt\hbar}\nonumber\\
%
&\geq&
\frac{e^{\frac A\hbar}}{2K\sqrt{2\pi}\sqrt{A}}\int_{A-\frac12\sqrt{\hbar A}}^{A+\frac12\sqrt{\hbar A}}
f(\lambda)e^{-\frac{\lambda-A\log{(\frac\lambda A)}}\hbar}\frac{d\lambda}{\sqrt\hbar}\nonumber\\
\mbox{(since $\log{(1+x)}-x\geq -\tfrac12 x^2$})\ \ \ \ &\geq& \frac1{2K\sqrt{2\pi}}\int_{-\frac12}^{+\frac12}f(A+\sqrt{\hbar A}\mu)e^{-\frac{\mu^2}2}d\mu\nonumber\\
&\geq&
\frac1{2K\sqrt{2\pi}}\inf_{-\frac12\leq\mu\leq+\frac12}f(A+\sqrt{\hbar A}\mu)
\int_{-\frac12}^{+\frac12}e^{-\frac{\mu^2}2}d\mu\nonumber\\
&\geq&
\frac{C_1e^{-\tfrac18}}{K\sqrt{2\pi}}
:=C'''>0\nonumber
\end{eqnarray}
Therefore \eqref{sup1} holds true if we define 
\be\label{sup3}
C'=\inf{(C,C'',C''')}={C_1}/{2\pi e^{\frac18}}.
\ee
Moreover, by integration by part,
\begin{eqnarray}
G((j+1/2)\hbar)&=&
-\int_0^\infty
\frac{(\rho^2/\hbar)^{j+1}}{(j+1)!\hbar}\frac d{d(\frac{\rho^2}\hbar)}(f(\rho^2)e^{-\rho^2/\hbar})\rho d\rho\nonumber\\
&=&
\int_0^\infty
f(\rho^2)\frac{(\rho^2/\hbar)^{j+1}}{(j+1)!\hbar}e^{-\rho^2/\hbar}\rho d\rho\nonumber\\
&-&
\int_0^\infty
\hbar f'(\rho^2)\frac{(\rho^2/\hbar)^{j+1}}{(j+1)!\hbar}e^{-\rho^2/\hbar}\rho d\rho
\nonumber
\end{eqnarray}
This proves the first part of \eqref{diff}. 

For $0<\eta<\infty$ we decompose the middle term in \eqref{diff} in two parts. Since by the inequality of \eqref{condF}, $-f'(\rho^2)\leq 2C_0$, we first compute
\begin{eqnarray}
-\int_0^{\sqrt\eta}
\hbar f'(\rho^2)\frac{(\rho^2/\hbar)^{j+1}}{(j+1)!\hbar}e^{-\rho^2/\hbar}\rho d\rho
&\leq&\hbar C_0\int_0^{\eta/\hbar}
\frac{(\lambda)^{j+1}}{(j+1)!}e^{-\lambda} d\lambda
\leq \hbar C_0\nonumber
\end{eqnarray}
Using the first inequality of \eqref{condF} we get that
\begin{eqnarray}
-\int_{\sqrt\eta}^\infty
\hbar f'(\rho^2)\frac{(\rho^2/\hbar)^{j+1}}{(j+1)!\hbar}e^{-\rho^2/\hbar}\rho d\rho
&=&
-\frac12\int_{\eta}^\infty f'(\lambda') \frac{e^{-\frac{\lambda'-(j+1)\hbar\log{\lambda'}}\hbar}}{\hbar^{j+1}(j+1)!}d\lambda'\nonumber\\
&=&
-\frac\hbar2
\int_{\eta/\hbar}^\infty
 f'(\hbar\lambda)\frac{\lambda^{j+1}}{(j+1)!}e^{-\lambda} d\lambda\nonumber\\
 &\leq&
 \hbar C_0\int_0^\infty (\lambda\hbar+1)^{-(\nu+1)}\frac{\lambda^{j+1}}{(j+1)!}e^{-\lambda} d\lambda\nonumber\\
 &\leq&
 \hbar C_0\int_0^\infty
 \min{\big(\frac{\hbar^{-(\nu+1)}}{(j+1)j(j-1)}\frac{\lambda^{j-\nu}}{(j-2)!},
 \frac{\lambda^{j+1}}{(j+1)!}
 \big)}e^{-\lambda} d\lambda\nonumber\\
 &\leq&
 \hbar C_0\min{((\hbar(j-(\nu-1)))^{-(\nu+1)},1)}\nonumber\\
 &\leq& \hbar C_02^{\nu+1}(\hbar(j-(\nu-1))+1)^{-(\nu+1)}\nonumber\\
&\leq&
 4^{\nu+1}\hbar C_0(j\hbar+\tfrac12)^{-(\nu+1)}.
\nonumber
\end{eqnarray}
\end{proof}
Let us summarize what has been done in this section: out of any $F=f^{\otimes d}$ where $f$ satisfies   condition \eqref{condF}, we have defined 
 $\mbox{OP}^T_\hbar({\color{black}(2\pi\hbar)^d}F)^{-1/2}$  by the formula
\begin{eqnarray}\label{defsqrt}
\mbox{OP}^T_\hbar({\color{black}(2\pi\hbar)^d}F)^{-1/2}
&=&
(2\pi\hbar)^{-d/2}\sum\limits_{j\in\bN^d}
g^{\otimes d}((j+1/2)\hbar)
|H_j\rangle\langle H_j|,\nonumber
\end{eqnarray}
in the sense that
\be\label{diag}
\langle H_j|\Op^T_\hbar({\color{black}(2\pi\hbar)^d}F)^{-1/2}
|H_i\rangle
=
(2\pi\hbar)^{-d/2}\delta_{i,j}
g^{\otimes d}((j+1/2)\hbar)
\ee
where $g$ satisfies, for all $A\in\bR^+$,
\begin{eqnarray}
g(A)&\leq& 
{\bcr (C')^{-\frac 12}}
(A+1)^{\frac\nu2},
\label{defg2}\\
g^{-2}((j+\tfrac12)\hbar)-g^{-2}((j+\tfrac32)\hbar)&\leq& 
 4^{\nu+1}\hbar C_0(A+\tfrac12)^{-(\nu+1)}
\label{defg3}
\end{eqnarray}
for $C,C_0$ defined in \eqref{condF} and $C'$ in \eqref{sup3}.

From now on, we will suppose that $f$ satisfies \eqref{condF} {\bcr for some $\nu>\tfrac32$}.

\subsection{Hypothesis \eqref{iaj} in Theorem \ref{main2} 
$\Longrightarrow$ (\ref{tight}') in Lemma \ref{lemlem}}\label{hypone}\ 

Condition (\ref{tight}')  is easily checkable by the control of the decay of $\ket{H_{j+k}}\sqrt{R^{in}}\bra{H_j}$ as both $j\mbox{ and }|k|\to\infty$. 
%

Let us recall that, \cite{ds} Chapter VI, Paragraph 9, Exercise 54, the operator norm of  any operator $B$ on a separable Hilbert space $\mathcal H$,  satisfies, for any orthonormal basis $\{\bra{j}\}$ of $\mathcal H$,
\be\label{lemds}
\|B\|\leq \max{\big(\sup_j\sum_{j'}|\ket{j'}B\bra{j}|,\sup_{j'}\sum_{j}|\ket{j'}B\bra{j}|\big)}
\ee

In our case $B=\sqrt{R^{in}}\mbox{OP}^T_\hbar({\color{black}(2\pi\hbar)^d}F)^{-1/2}$ so that, after taking {\bcr $\nu=\tfrac32+2\epsilon, \ 0<\epsilon\leq 1,$ (say) }in \eqref{diag},
$$
\ket{j+k}B\bra{j}={\color{black}(2\pi\hbar)^{-d/2}}\ket{j+k}\sqrt{R^{in}}\bra{j}g^{\otimes d}((j+1/2)\hbar)
$$
 with  $\sqrt{R^{in}}$ self-adjoint and 
$g$ satisfying 
\eqref{defg2}.

Therefore,  one has, under the condition \eqref{iaj}  in Theorem \ref{main2},
\begin{eqnarray}
\ket{j'}\sqrt{R^{in}}\mbox{OP}^T_\hbar({\color{black}(2\pi\hbar)^d}F)^{-1/2}\bra{j}
&\leq& {\bcr (C')^{-\frac d2}\mu'(\hbar)\prod\limits_{1\leq l\leq d}(|j_l-j'_l|+1)^{-\frac32-\epsilon}}
\nonumber
\end{eqnarray}
%
{\bcr Since $$\sum_{j'_l=0}^\infty(|j_l-j'_l|+1)^{-\frac32-\epsilon}=
\sum_{j_l=0}^\infty(|j_l-j'_l|+1)^{-\frac32-\epsilon}\leq 1,$$ 

 and $C'= C_1/(2\pi e^{\frac18})$, \eqref{lemds} gives
\be\label{supj}
\|\sqrt{R^{in}}\mbox{OP}^T_\hbar(F)^{-1/2}\|
\leq
(2\pi)^{\frac d2}e^{\frac d{16}}C_1^{-\frac d2}\mu'(\hbar)
\ee
Therefore, the tightness condition (\ref{tight}') in Proposition \ref{main2soft} is satisfied with
\be\label{mutorho}
\rho(\hbar)=
(2\pi)^{\frac d2}e^{\frac d{16}}C_1^{-\frac d2}
\mu'(\hbar).
\ee
}

\vskip 1cm

\vskip 1cm
\vskip 1cm
\subsection{Hypothesis \eqref{iaj} and \eqref{ioaj} in Theorem \ref{main2} $\Longrightarrow$ (\ref{semic}') in Lemma \ref{lemlem}}\label{hyptwo} \ 

In order to prove  the semiclassical condition (\ref{semic}'),
we first remark that
\be\label{semicla}
[O,\sqrt{R^{in}}T_F^{-1/2}]=\sqrt{R^{in}}[O,T_F^{-1/2}]+[O,\sqrt{R^{in}}]T_F^{-1/2}.
\ee

The second term of the right hand side is treated exactly the same way as for (\ref{tight}'):

{\bcr - for $O=\mathcal H_l=-\hbar^2\nabla_l^2+x_l^2$ we first note that
$$
\la j'|[\mathcal H_l,\sqrt{R^{in}}]|j\ra=(j'_l-j_l)\hbar
\la j'|\sqrt{R^{in}}|j\ra,
$$
so that, by \eqref{iaj}  in Theorem \ref{main2},
$$
|\la j'|[\mathcal H_l,\sqrt{R^{in}}]T_F^{-\frac12}|j\ra|
\leq C'^{-d}\hbar\mu'(\hbar)
\prod\limits_{1\leq l\leq d}(|j_l-j'_l|+1)^{-1-\epsilon}.
$$
Since $\prod\limits_{1\leq l\leq d}(|j_l-j'_l|+1)^{-1-\epsilon}\leq 2/\epsilon$ and $C'= C_1/(2\i e^{\frac18})$, we get, by \eqref{lemds},
\be\label{oscharm}
\|[\mathcal H_l,\sqrt{R^{in}}]T_F^{-\frac12}\|
\leq 
(2\pi)^{\frac d2}e^{\frac d{16}}(C_1\epsilon^2)^{-\frac d2}
\hbar\mu'(\hbar).
\ee

- for $O\neq \mathcal H_l$, \eqref{ioaj} in Theorem \ref{main2} gives precisely by the same argument
\be\label{noscharm}
\|[O,\sqrt{R^{in}}]T_F^{-\frac12}\|
\leq 
(2\pi)^{\frac d2}e^{\frac d{16}}(C_1\epsilon^2)^{-\frac d2}\nu'(\hbar).
\ee
}

{\bcr For the first term in the right hand-side of \eqref{semicla}, let us start with $O_l=x_l+\hbar\partial_{x_l}, l=1,\dots,d$, and let us remember that $O_lH_j=\sqrt{(j_l+1)\hbar}H_{j+1_l}$ and $O_l^*H_j=\sqrt{j_l\hbar}H_{j-1_l}$, as is easily checked.

Defining $1_l\in\bN^d$ by  $(1_l)_k=\delta_{k,l},k,l=1,\dots,d$, one has, with $A:=\sqrt{R^{in}}$,
\begin{eqnarray}
&&\la H_i|A[O_l,T^{-1/2}]|H_j\ra\nonumber\\
&=& (
g
^{\otimes d}((j+\tfrac12)\hbar)
-
g
^{\otimes d}((j+1_l+\tfrac12)\hbar))
\big(\sqrt{(j_l+1)\hbar}\la H_i|A|H_{j+1_l}\rangle\nonumber
\end{eqnarray}

so that (remember $g=G^{-\frac12}$)
\begin{eqnarray}
&&\label{truc}\\
&&|\la H_{j+k}|A[O_l,T^{-1/2}]|H_j\ra|\leq\nonumber\\
&& |G((j_l+\tfrac12)\hbar)^{-1/2}
-
G((j_l+\tfrac32)\hbar)^{-1/2}|)
\sqrt{(j_l+1)\hbar} |((j_l+\tfrac12)\hbar+1)^{-\tfrac12-\epsilon}
(|k_l-1|+1)^{-2-\epsilon}
\nonumber\\
&&\times
\mu'(\hbar)
\prod_{m\neq l}
|g(j_m+\tfrac12)\hbar)
|((j_m+\tfrac12)\hbar+1)|
^{-\tfrac12-\epsilon}
(|k_m|+1)^{-2-\epsilon}.
\nonumber
\end{eqnarray}


- the second factor in the  right hand-side of \eqref{truc}, namely
$$
C
\prod_{m\neq l}
|G(j_m+\tfrac12)\hbar)
|((j_m+\tfrac12)\hbar+1)|
^{-\tfrac12-\epsilon}
(|k_m|+1)^{-1-\epsilon},
$$
 will gives again,  aa for the derivation of \eqref{supj} by Lemma \ref{lemds}, a contribution to $\|A[O,T^{-1/2}]\|$ bounded by  $
\mu'(\hbar)
(2\pi)^{d-1}e^{\frac {d-1}8}C^{-(d-1)}_1
$.


- for the estimate of the first factor in the right hand-side of \eqref{truc}, 
\begin{eqnarray}
&&
|G((j_l+\tfrac12)\hbar)^{-1/2}
-
G((j_l+\tfrac32)\hbar)^{-1/2}|)
\sqrt{(j_l+1)\hbar} |((j_l+\tfrac12)\hbar+1)^{-\tfrac12-\epsilon}
(|k_l-1|+1)^{-2-\epsilon}\nonumber\\
&\leq&
|G((j_l+\tfrac12)\hbar)^{-1/2}
-
G((j_l+\tfrac32)\hbar)^{-1/2}|)
((j_l+\tfrac12)\hbar+1)^{-\epsilon} (|k_l-1|+1)^{-2-\epsilon}\nonumber
\end{eqnarray}
we first remark that, since $f'\leq0$ and \eqref{diff} holds true, one can extend $G$ to $\bR^+$ with  $G'(x)\leq 0,\ x\in\bR^+$. Therefore, 
\begin{eqnarray}
&&|G((j_l+\tfrac12)\hbar)^{-1/2}
-
G((j_l+\tfrac32)\hbar)^{-1/2}|\nonumber\\
&=&\tfrac12
|\int_)^\hbar
G'((j+\tfrac12)\hbar+\lambda)
G((j+\tfrac12)\hbar+\lambda)^{-\tfrac32}
d\lambda|\nonumber\\
(\mbox{since }G'\leq 0\leq G)\ \ \ \ \ &\leq&\tfrac12
\sup_{0\leq\lambda\leq\hbar}{(G((j+\tfrac12)\hbar+\lambda)^{-\frac32})}
|G((j+\tfrac32)\hbar-G((j+\tfrac12)\hbar)|\nonumber\\
&=&\tfrac12
\sup_{0\leq\lambda\leq\hbar}{(g((j+\tfrac12)\hbar+\lambda)^{3})}
|G((j+\tfrac32)\hbar-G((j+\tfrac12)\hbar)|
\nonumber\\
(by\ \eqref{sup1}\ and\ \eqref{diff})\ &\leq&
4^{\nu+1}\hbar C_0(2\pi e^{\frac18}C_1)^{-\frac 32}
(j\hbar+1/2)^{\frac\nu 2-1}.\nonumber
\end{eqnarray}
Recalling that $\nu=\tfrac32+2\epsilon$ we get that the first factor in the right hand-side of \eqref{semicla} is bounded by 
$$4^{\nu+1}
\hbar C_0(|k_l-1|+1)^{-1-\epsilon}=
4^{\nu+1}\hbar C_0(2\pi e^{\frac18}C_1)^{-\frac 32}(|(j+k)_l-j_l-1|+1)^{-1-\epsilon}
$$
 Therefore, it gives a contribution to $\|A[O,T^{-1/2}]\|$, by  \eqref{lemds} again, bounded by  $8\hbar C_0$.

Putting together the two contributions of \eqref{truc}, we get that
\be\label{aot}
\|A[O,T^{-1/2}]\|
\leq4^{\nu+1} C_0(2\pi e^{\frac18}C_1)^{-\frac d2-1}\hbar\mu'(\hbar).
\ee


The term with $O_l^*=x_l-\partial_{x_l}=(x_l+\partial_{x_l})$ is done the same way. 
{}

Putting  together \eqref{oscharm}, \eqref{noscharm} and  \eqref{aot}, 
we get the semiclassical condition (\ref{semic}') in Proposition \ref{main2soft} with
\be\label{munutotau}
\tau'(\hbar)=
(2\pi e^{\frac18}C_1)^{- \frac d2-1}
(4^{\nu+1}C_0+2\pi e^{\frac18}C_1\epsilon^{-d})\sup{(\hbar\mu'(\hbar),\nu'(\hbar))}.
\ee
so that, by \eqref{mutorho},
Theorem \ref{main} item \eqref{m2} is a consequence of item \eqref{m3} with
$$
\tau(\hbar)=\rho(\hbar)\tau'(\hbar)=
(2\pi e^{\frac18}C_1)^{- d-1}
(4^{\nu+1}C_0+2\pi e^{\frac18}C_1\epsilon^{-d})\sup{(\hbar\mu'(\hbar)^2,\nu'(\hbar)\mu'(\hbar))}
$$
Taking  $f(\rho^2)=C_1(1+\rho^2)^{-2-2\epsilon}$ with  $ 2\pi e^{\frac18}C_1 \leq 1$ leads to 
\be\label{munutotau2}
\tau(\hbar)
\leq 
(4^6(2+\epsilon)+\epsilon^{-d})
\sup{(\hbar\mu'(\hbar)^2,\nu'(\hbar)\mu'(\hbar))}
\ee
}
Therefore item $\ref{m2})$ is proven for 
\be\label{dm2}
D_{\ref{m2}}=(4^6(2+\epsilon)+\epsilon^{-d})D_{\ref{m3}}
\ee
 

\end{proof}

\vskip 1cm

\section{Proof of Theorem \ref{main} item \textit{\eqref{m1}}}\label{proofmain1}
In order to prove Theorem \ref{main} item \eqref{m1} as a corollary of item \eqref{m2},  we need to estimate, under the  hypothesis $(1)$ and $(2)$ of Theorem \ref{main1}, the matrix elements  of $\sqrt{R^{in}}$ and $[O,\sqrt{R^{in}}]$, $O\in\Omega_1$, between semiclassical  Hermite functions, uniformly in $\hbar$. We will carry out this computation out of their Wigner transform, or equivalently their Weyl symbols (which we recall to be their Wigner functions multiplied by $(2\pi\hbar)^d$).

Of course no exact formula for these matrix elements exists, and WKB expansions involve unsatisfactory (for our purpose) remainder terms. Nevertheless, there exist explicit formulas for $H_j$ expressed in terms of coherent states, which, together with a formulation of Weyl calculus involving Wigner operators (see below), will allow us to conclude. The proof is a bit involved, and we split it in several steps.
\subsection{Hermite as exact Gaussian quasimodes}\label{hermitegauss}
We first take $d=1$.

From the formula for the generating function of the Hermite polynomials $\{h^p_n\}_{n=0,\dots}$, namely
$$
\sum_{n=0}^\infty
h^p_n(x)\frac{z^n}{n!}=e^{2xz-z^2},\ \|h^p_n\|_{L^2(\bR,e^{-{x^2}}dx)}=\pi^\frac14 \sqrt{2^nn!}
$$
valid for all $z\in\bC$, we get easily that the normalized eigenfunctions of the harmonic oscillator $h_j,j=0,\dots$ satisfy
\be\label{defgj}
\sum_{j=0}^\infty
h_j(x)
\frac{(\frac z{\sqrt{
\hbar}})^{j}e^{-\frac{|z|^2}{2\hbar}}}
{\sqrt {
 j!}}
 =
(\pi\hbar)^{-\frac14}
e^{
-\frac{z^2-2\sqrt 2 zx+x^2}{2\hbar}}e^{-\frac{|z|^2}{2\hbar}}:=g^z(x).
\ee
Note that $\|g^z\|_{L^2(\bR,dx)}=1$.

We get that, for any choice of $z\in\bC, z\neq 0$,
$$
(2\pi)^{-\frac34}
\sqrt{\frac{|z|}{\sqrt\hbar}}
\int_0^{2\pi}
g^{e^{it}z}
e^{-ijt}
dt
=
\sqrt{\frac{\big(\frac{|z|^2}{\hbar}\big)^{j}e^{-\frac{|z|^2}{\hbar}}\sqrt{2\pi\frac {|z|^2}{\hbar}}}{j!}}
h_j
$$
so that \begin{eqnarray}
h_j
&=&
\sqrt{\frac{j!}{\big(\frac{|z|^2}{\hbar}\big)^{j}e^{-\frac{|z|^2}{\hbar}}\sqrt{2\pi\frac {|z|^2}{\hbar}}}}
(2\pi)^{-\frac34}
\sqrt{\frac{|z|}{\sqrt\hbar}}
\int_0^{2\pi}
g^{e^{it}z}
e^{-ijt}
dt\nonumber\\
&:=&C_j(z)
(2\pi)^{-\frac34}
\sqrt{\frac{|z|}{\sqrt\hbar}}
\int_0^{2\pi}
g^{e^{it}z}
e^{-ijt}
dt\label{defcj}
\end{eqnarray}
for all $z\neq 0\in\bC$.
In particular, taking now ${|z|^2}=(j+1/2)\hbar$,
$$
\bar C_j:=C_j(\sqrt{(j+1/2)\hbar})=
\sqrt{\frac{j!}
{
\sqrt{2\pi j}j^je^{_-j}
}}<1\mbox{ and }\sim 1\mbox{ as }
j\to\infty,
$$
since $\sqrt{2\pi (j+1/2)}(j+1/2)^je^{-(j+1/2)}>\sqrt{2\pi j}j^je^{_-j}$ and  $\sqrt{2\pi (j+1/2)}(j+1/2)^je^{-(j+1/2)}\sim\sqrt{2\pi j}j^je^{-j}$ as $j\to\infty$.

The same way,
\begin{eqnarray}
h_{j'}
&=&
C_{j'}(\sqrt{(j'+1/2)})
(2\pi)^{-3/4}(j'+1/2)^{1/4}
\int_0^{2\pi}
g^{e^{i\theta}\sqrt{(j'+1/2)\hbar}
}
e^{-ij'\theta}
d\theta,\nonumber\\
\mbox{ but also }\ \ \ 
h_{j'}&=&
C_{j'}(\sqrt{(j+1/2)})
(2\pi)^{-3/4}(j+1/2)^{1/4}
\int_0^{2\pi}
g^{e^{i\theta}\sqrt{(j+1/2)\hbar}
}
e^{-ij'\theta}
d\theta.\nonumber\\
\mbox{Therefore\ \ \ \ }
h_{j'}&\sim&
\sqrt{\frac{j'!}{j!}}
(2\pi)^{-3/4}(j+1/2)^{j-j'+1/4}
\int_0^{2\pi}
g^{e^{i\theta}\sqrt{(j+1/2)\hbar}
}
e^{-ij'\theta}
d\theta,\nonumber
\end{eqnarray}
since $\sqrt{2\pi j}j^je^{-j}\sim \sqrt{2\pi (j+1/2)}j^je^{-(j+1/2)}$ as $j\to\infty$.

Therefore, for any bounded operator $A$,

\begin{eqnarray}
\ket{j'}A\bra{j}
&=&
\frac{\bar C_j\bar C_{j'}}{(2\pi)^\frac3e}
(j+1/2)^{1/4}(j'+1/2)^{1/4}
\int_0^{2\pi}ds'\int_0^{2\pi}ds
e^{i(j's'-js)}
(g_{}^{e^{is'}\sqrt{j'\hbar}},Ag_{}^{e^{is}\sqrt{j\hbar}}).
\nonumber\\
\mbox{ but also }&&\nonumber\\
\ket{j'}A\bra{j}
&=&
\frac{\bar C_jC_{j'}(\sqrt{(j+1/2)\hbar})}{(2\pi)^\frac3e}
(j+1/2)^{1/2}
\int_0^{2\pi}ds'\int_0^{2\pi}ds
e^{i(j's'-js)}
(g_{}^{e^{is'}\sqrt{j\hbar}},Ag_{}^{e^{is}\sqrt{j\hbar}}).\nonumber
\end{eqnarray}
Here we have used the Dirac notation, and $\ket{j'}A\bra{j}$ is meant for 
$(h_{j'},Ah_j)_{L^2(\bR,dx)}$.

In particular,
\begin{eqnarray}
\label{jprimeajo}&&\\
|\ket{j'}A\bra{j}|
&\leq&\nonumber
\frac{(j+1/2)^{1/4}(j'+1/2)^{1/4}}{(2\pi)^\frac32}
\left|
\int_
{\bT^2}
ds'
ds
e^{i(j's'-js)}
(g_{}^{e^{is'}\sqrt{(j'+\frac12)\hbar}},Ag_{}^{e^{is}\sqrt{(j+\frac12)\hbar}})
\right|.
\\
\mbox{ but also }&&\nonumber\\
\label{jprimeaj}
|\ket{j'}A\bra{j}|
&\leq&
(2\pi)^{-\frac32}
\sqrt{\frac{j'!}{j!}(j+1/2)^{j-j'+1}}
\left|\int_
{\bT^2}
ds'
ds
e^{i(j's'-js)}
(g_{}^{e^{is'}\sqrt{(j'+\frac12)\hbar}},Ag_{}^{e^{is}\sqrt{(j+\frac12)\hbar}})\right|\nonumber\\
&\leq&
\frac{e^{\frac{(j-j')^2}{j+1/2}}(j+1/2)^{1/2}}{(2\pi)^{\frac32}}
\left|\int_
{\bT^2}
ds'
ds
e^{i(j's'-js)}
(g_{}^{e^{is'}\sqrt{(j'+\frac12)\hbar}},Ag_{}^{e^{is}\sqrt{(j+\frac12)\hbar}})\right|
\end{eqnarray}
Indeed, if $j'=j+k, k\geq 0$ (the case $k<0$ is done the same way),
\begin{eqnarray}
\frac{j'!}{j!}(j+1/2)^{j-j'}
\leq
\frac{(j+1/2+k)^k}{(j+1/2)^k}\leq
(1+\frac k{j+1/2})^k\leq e^{\frac{k^2}{j+1/2}}\nonumber
\end{eqnarray}
We'll estimate later $|\ket{j'}A\bra{j}|$ by \eqref{jprimeaj} for $e^{\frac{(j-j')^2}{j+1/2}}\leq 1$ and by \eqref{jprimeajo} for $e^{\frac{(j-j')^2}2}>1$.
%
 
\vskip 1cm
The extension to the $d$-dimensional case is straightforward, by tensorial factorization.

\subsection{Weyl calculus through Wigner operators}\label{weylwigner}
Let us suppose now that $A$ is 
a Hilbert-Schmidt operator of Wigner function $a\in L^2(\bR^d)$. 
Note that this is to say that, in a weak sense,
\be\label{wiga}
A=\int_{\bC^{d}} 
d^2z
a(z)
W(-z)IW(z)
:=\mbox{Wig}(a)
\ee where the Wigner operators $W(z)$ are the unitary operators on $L^2(\bR^d)$ defined through (denoting $z=(q,p)$)
$$
W(q,p)f(x)=f(x-q)e^{-i\frac{px}\hbar}e^{-i\frac{pq}4}\mbox{ and }If(x):=f(-x).
$$
Indeed the integral kernel of 
$W(-z)IW(z)$ is $\delta(-x-2q-y)e^{i\frac{p(2x-\frac q2)}\hbar}$ 
so that the integral kernel of $A=\int _{\bC^{d}}d^2z 
a(z)
W(-z)IW(z)$ is
$$
\rho_A(x,y)=\int_{\bR^d} a(\frac{x+y}2,\xi)e^{i\frac{\xi(x-y)}\hbar}
d\xi
$$
which is the well-known Weyl quantization formula expressed on the Wigner function (let us recall that the latter is nothing but the Weyl symbol divided by $(2\pi\hbar)^d$).


We get easily that

\begin{eqnarray}
(g^{z'},W(-z_0)IW(z_0)g^z)
&=&
e^{\frac{\bar{z}z_0-\bar{z_0}z}{\sqrt 2\hbar}}
e^{-\frac{\bar{z'}(z+\sqrt 2z_0)}\hbar}
e^{-\frac{|z'|}{2\hbar}}
e^{-\frac{|z+\sqrt 2z_0|}{2\hbar}}\nonumber\\
&=&
e^{-\frac{|z'|^2}{2\hbar}-\frac{|z|^2}{2\hbar}-\frac{|z_0|^2}{\hbar}
-\frac{\bar{z'}z}\hbar-\sqrt 2\frac{\bar{z'}z_0+z\bar{z_0}}\hbar}.\nonumber
\end{eqnarray}
so that
\begin{eqnarray}
(g^{z'},W(-\frac{z_0}{\sqrt 2})IW(\frac{z_0}{\sqrt 2})g^z)
&=&
e^{-\frac{|z'|^2}{2\hbar}-\frac{|z|^2}{2\hbar}-\frac{|z_0|^2}{2\hbar}
-\frac{\overline{z'}z}\hbar-\frac{\overline{z'}z_0+z\overline{z_0}}\hbar}\nonumber
\end{eqnarray}
Note that
$$
\Re\big(\tfrac{|z'|^2}{2}+\tfrac{|z|^2}{2}+\tfrac{|z_0|^2}{2}
+{\overline{z'}z}+{\overline{z'}z_0+z\overline{z_0}}\big)
=|z'+z+z_0|^2\geq 0.
$$

For $d=1$ 
we get that \eqref{jprimeajo} becomes

\begin{eqnarray}
|\ket{j'}A\bra{j}|
&\leq&
\frac{(j+1/2)^{1/4}(j'+1/2)^{1/4}}{(2\pi)^\frac32}B_{j,j'}\label{bonbonoa}
\\
B_{j,j'}
&:=&
\left|
\int_{S^1}d\theta'\left(
\int_{\bR^2} d^2z
\int_{S^1}d\theta 
a\big(\frac z{\sqrt2}\big)e^{-\frac{\phi_{j,j'}(z,\bar z,\theta,\theta')}\hbar}\right)e^{i(j'-j)\theta'}
\right|
\label{bonbono}\\
\label{defphi2}
\phi_{j,j'}(z,\overline{z},\theta,\theta')
&:=&
\frac{(j+\tfrac12)\hbar}2+\frac{(j'+\tfrac12)\hbar}2
+\frac{z\bar z}2+\sqrt{(j+\tfrac12)(j'+\tfrac12)}\hbar e^{i(\theta-\theta')}\\
&&+\bar z\sqrt{(j+\tfrac12)\hbar}e^{i\theta}
+
z\sqrt{(j'+\tfrac12)\hbar}e^{-i\theta'}+ij\hbar(\theta-\theta')
.\nonumber
\end{eqnarray}
 and \eqref{jprimeaj} becomes

\begin{eqnarray}
|\ket{j'}A\bra{j}|
&\leq&
\frac{e^{\frac{(j-j')^2}{j+1/2}}(j+1/2)^{1/2}}{(2\pi)^{\frac32}}
A_{j,j'}\label{bonbona}\\
A_{j,j'}
&:=&
\left|
\int_{S^1}d\theta'\left(
\int_{\bR^2} d^2z
\int_{S^1}d\theta 
a\big(\frac z{\sqrt2}\big)e^{-\frac{\phi_j(z,\bar z,\theta,\theta')}\hbar}\right)e^{i(j'-j)\theta'}
\right|
\label{bonbon}\\
\label{defphi}
\phi_j(z,\overline{z},\theta,\theta')
&:=&
(j+\tfrac12)\hbar+\frac{z\bar z}2+(j+\tfrac12)\hbar e^{i(\theta-\theta')}\\
&&+\bar z\sqrt{(j+\tfrac12)\hbar}e^{i\theta}
+
z\sqrt{(j+\tfrac12)\hbar}e^{-i\theta'}+ij\hbar(\theta-\theta')
.\nonumber
\end{eqnarray}
Performing in \eqref{bonbono} and \eqref{bonbon} first the change of variable $z\to ze^{i\theta'}$ and then the one $\theta\to\theta+\theta'$ we get easily that, after definition \eqref{baraka}
\begin{eqnarray}
B_{j,j'}&=&
\left|
\int_{\bR^2} d^2z
\int_{S^1}d\theta 
a_{j-j'}\big(\frac z{\sqrt2}\big)e^{-\frac{\Phi_{j,j'}(z,\bar z,\theta)}\hbar}
\right|
\label{ajjprimeo}
\\
A_{j.j'}&=&
\left|
\int_{\bR^2} d^2z
\int_{S^1}d\theta 
a_{j-j'}\big(\frac z{\sqrt2}\big)e^{-\frac{\Phi_{j}(z,\bar z,\theta)}\hbar}
\right|
\label{ajjprime}\\
\mbox{ with }\ \ \ 
\Phi_{j,j'}(z,\overline{z},\theta)
&:=&
\frac{(j+\tfrac12)\hbar}2+\frac{(j'+\tfrac12)\hbar}2
+\frac{z\bar z}2+\sqrt{(j+\tfrac12)(j'+\tfrac12)}\hbar e^{i\theta}\label{defphijjrpime}\\
&&+\bar z\sqrt{(j+\tfrac12)\hbar}e^{i\theta}
+
z\sqrt{(j'+\tfrac12)\hbar}+ij\hbar\theta
\nonumber\\
\mbox{ and }\ \ \ \ \ \ \Phi_j(z,\overline{z},\theta)&:=& \Phi_{j,j}(z,\overline{z},\theta)=
(j+\tfrac12)\hbar+\frac{z\bar z}2+(j+\tfrac12)\hbar e^{i\theta}
\label{defphi}\\
&&+\bar z\sqrt{(j+\tfrac12)\hbar}e^{i\theta}
+
z\sqrt{(j+\tfrac12)\hbar}+ij\hbar\theta.
\nonumber
\end{eqnarray}

\ 

%
Let us recall that
\be\label{imphi}
\Re{\Phi_{j,j'}(z,\bar z,\theta)}
=
|\sqrt{(j+\tfrac12)\hbar} e^{i\theta_l}+\sqrt{(j'+\tfrac12)\hbar} 
+z|^2\geq 0
\ee
so that, since $\Phi_j=\Phi_{j,j}$,
\be\label{alinfnity}
A_{j,j'},B_{j,j'}\leq \frac{(2\pi)^3\hbar}2\|{a}_{j-j'}\|_{L^\infty(\bR^{2d})}.
\ee

Since $\Phi^d_{j,j'}$ is a sum of one dimensional contributions, we will treat the one dimensional case. The (straightforward) extension to $d$ dimensions will be done in the next section.
\subsection{
The case
$d=1$}\label{endproofdone}
\ 

Note that for $d=1$, {\bcr \eqref{defmkn} reads, for any $K,M\in\bR^+,\ N\in \bN$ 
 \be\label{defmkninv}
 \sup_{0\leq |\alpha|\leq N}
 |\hbar^{\frac{|\alpha|}2}D^\alpha_z {a}_k(z)|,
 \leq \|a\|_{M,K,N}(|z|^2+1)^{-M}(|k|+1)^{-K}
 \ee 
 }
\vskip 1cm
\subsubsection{Matrix elements estimates}\label{matesti}\ 
In this section we prove the following crucial result.
\begin{Prop}\label{propmatesti}
Let $A$ by a Hilbert-Schmidt operator (say). 

Then for all $M,\alpha,\tau,\mu,n\geq 0$,
\begin{eqnarray}
&&|\ket{j'}A\bra{j}|\leq
D_{\mu,n}\frac{2\pi\hbar\|W_\hbar[A]\|_{M,1+\epsilon+\tau,n}}{||j'-j|+\tfrac12|^{\min{(1+\alpha+\epsilon,1+\epsilon+\tau)}}((j+\tfrac12)\hbar+\tfrac12)^{\min{(\frac{\bvc-\alpha-1}2,\mu-\frac32,M,\frac {n-1}2)}}}.\label{moinsplus2}
\end{eqnarray} 
where $D_{\mu,n}$ is given below by \eqref{constot}.
\end{Prop}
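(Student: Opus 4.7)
The starting point is the two representations \eqref{bonbonoa}--\eqref{bonbono} and \eqref{bonbona}--\eqref{bonbon}, already reduced in \eqref{ajjprimeo}--\eqref{ajjprime} to integrals of the Fourier coefficient $a_{j-j'}(z/\sqrt2)$ against the Gaussian $e^{-\Phi_{j,j'}/\hbar}$ (resp.\ $e^{-\Phi_j/\hbar}$). The key is \eqref{imphi}: $\Re\Phi_{j,j'}=|z+\sqrt{(j+\tfrac12)\hbar}\,e^{i\theta}+\sqrt{(j'+\tfrac12)\hbar}|^2$, so the Gaussian is concentrated at $z\simeq -\sqrt{(j+\tfrac12)\hbar}\,e^{i\theta}-\sqrt{(j'+\tfrac12)\hbar}$ with $|z|^2$ of order $(j+\tfrac12)\hbar$. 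I will rescale $z=\sqrt{(j+\tfrac12)\hbar}\,w$; the measure picks up $(j+\tfrac12)\hbar$, and a $D_z^\alpha$ becomes $((j+\tfrac12)\hbar)^{-|\alpha|/2}D_w^{|\alpha|}$, which is exactly the natural scaling since $\|a\|_{M,K,N}$ weights $D_z^\alpha$ by $\hbar^{|\alpha|/2}$ in \eqref{defmkninv}.

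I will then produce three independent sources of decay and combine them at the end.

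\textbf{(i) Fourier decay in $j-j'$:} the hypothesis \eqref{defmkninv} directly yields $\|a_{j-j'}\|_{L^\infty}\le \|a\|_{M,1+\epsilon+\tau,n}(|j-j'|+1)^{-(1+\epsilon+\tau)}$, which, inserted in the trivial bound \eqref{alinfnity}, already gives the $(|j-j'|+\tfrac12)^{-(1+\epsilon+\tau)}$ factor in \eqref{moinsplus2}.

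\textbf{(ii) Spatial decay in $(j+\tfrac12)\hbar$:} after the rescaling $z=\sqrt{(j+\tfrac12)\hbar}\,w$ the bound $|a_{j-j'}(z/\sqrt2)|\le \|a\|_{M,K,N}(|z|^2/2+1)^{-M}(|j-j'|+1)^{-K}$ produces a factor $((j+\tfrac12)\hbar+1)^{-M}$ after integrating the rescaled Gaussian $e^{-|w+e^{i\theta}+\sqrt{(j'+\tfrac12)/(j+\tfrac12)}|^2(j+\tfrac12)}\,dw$ (the Gaussian localises $|w|$ near $O(1)$ so $|z|^2\gtrsim (j+\tfrac12)\hbar$ off a set of small measure; the usual splitting $\{|w|<\tfrac12\}\cup\{|w|\ge \tfrac12\}$ handles both regions). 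This explains the $((j+\tfrac12)\hbar)^{-M}$ in the $\min$ of \eqref{moinsplus2}.

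\textbf{(iii) Oscillation in $\theta$ and extra decay from derivatives:} the $\theta$-dependence of $\Phi_{j,j'}$ is $(j+\tfrac12)\hbar\,e^{i\theta}+\sqrt{(j+\tfrac12)\hbar}\,\bar z\,e^{i\theta}+ij\hbar\theta$, so $\hbar\partial_\theta$ applied to $e^{-\Phi/\hbar}$ produces a symbol growing like $\sqrt{(j+\tfrac12)\hbar}(|z|+\sqrt{(j+\tfrac12)\hbar})+j\hbar$. I integrate by parts $n$ times in $\theta$, moving $(\hbar\partial_\theta)^n$ onto $a_{j-j'}(z/\sqrt2)$, which produces $z$-derivatives of order up to $n$ weighted by powers of $\hbar$; by the definition of $\|\cdot\|_{M,K,N}$ this is controlled by $\|a\|_{M,K,n}$. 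Since the $\theta$-Fourier mode of $\Phi$ is $j-j'$, each integration by parts gains a factor $(|j-j'|+1)^{-1}$, at the cost of $(j+\tfrac12)^{-1/2}$ per derivative after reabsorbing the symbols by the rescaling. This produces the alternative exponent $1+\alpha+\epsilon$ in $|j-j'|$ (using only part of the integrations by parts) and the $(n-1)/2$ exponent of $(j+\tfrac12)\hbar$.

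\textbf{Combining the two representations.} Finally I select \eqref{bonbonoa} or \eqref{bonbona} according to whether $e^{(j-j')^2/(j+\tfrac12)}$ is $\lesssim 1$: in the regime $|j-j'|\le\sqrt{j+\tfrac12}$ I use \eqref{bonbona} which gives the sharper prefactor $(j+\tfrac12)^{1/2}$; in the complementary regime \eqref{bonbonoa} suffices since the $(|j-j'|+1)^{-K}$ Fourier decay more than absorbs the prefactor $(j+\tfrac12)^{1/4}(j'+\tfrac12)^{1/4}$. Taking the minimum over the competing exponents ($\alpha$ from integration by parts, $\tau$ from raw Fourier decay, $M$ from spatial decay, $n$ from high derivatives) yields \eqref{moinsplus2}, with the constant $D_{\mu,n}$ absorbing the combinatorial factors from the successive integrations by parts, the $\theta$-integral over $\bT$, and the value of the rescaled Gaussian integral.

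The main obstacle will be step (iii): when integrating by parts in $\theta$, the derivatives of the Gaussian $e^{-\Phi/\hbar}$ produce a rational symbol that mixes $z$, $\sqrt{(j+\tfrac12)\hbar}$ and $j\hbar\theta$, and one must verify that after the rescaling $z=\sqrt{(j+\tfrac12)\hbar}\,w$ every term produced lies in the class of operators whose $L^1$-norm against the Gaussian is uniformly bounded, so that the only surviving growth is the expected $\sqrt{(j+\tfrac12)\hbar}$ at each step, balanced by the gain $(|j-j'|+1)^{-1}$. Carefully tracking this balance is what determines the exponents $(\bvc-\alpha-1)/2$ and $\mu-3/2$ appearing in the conclusion.
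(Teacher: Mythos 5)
Your overall scaffolding (the exact Gaussian representation of the Hermite functions, the reduction to the Fourier coefficients $a_{j-j'}$, the rescaling $z\mapsto\sqrt{(j+\tfrac12)\hbar}\,w$, and the dichotomy $|j-j'|^2\gtrless j+\tfrac12$ with the trivial bound \eqref{alinfnity} in the first regime) matches the paper. But the two mechanisms you propose for the decay in $(j+\tfrac12)\hbar$ in the regime $|j-j'|^2\le j+\tfrac12$ do not work as stated. In step (ii) you claim the Gaussian $e^{-(j+\frac12)|w+e^{i\theta}+1|^2}$ localises $|w|$ near $O(1)$, so that the weight $(|z|^2+1)^{-M}$ yields $((j+\tfrac12)\hbar+1)^{-M}$ uniformly. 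This fails near $\theta=\pi$: there the real part of the phase vanishes at $w=-(e^{i\theta}+1)\approx 0$, the Gaussian mass sits at $z\approx 0$, and the weight gives no decay at all. The paper extracts the $M$-exponent only from the annulus $1\le|w|\le3$, where a genuine stationary-phase analysis (critical point of the \emph{full complex} phase at $\theta=0$, $w=-2$, with nondegenerate Hessian of determinant $-1/4$) evaluates the leading term at $|z|=\sqrt{2(j+\tfrac12)\hbar}$; the ball $|w|\le\tfrac32$, which contains the troublesome $\theta\approx\pi$ concentration, must be treated separately.

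That separate treatment is where your step (iii) goes wrong. After the change of variables that produces $a_{j-j'}$, the remaining $\theta$-integral carries \emph{no} oscillation at frequency $j-j'$ (that oscillation has been entirely absorbed into the Fourier coefficient), so integrating by parts in $\theta$ gains nothing in $|j-j'|$; and if you integrate by parts in $\theta'$ before the reduction, each step costs a factor $\partial_{\theta'}\phi_{j,j'}/\hbar\sim j$, a losing trade. In the paper all the $|j-j'|$-decay comes from the hypothesis on the Fourier coefficients (the $(|k|+d)^K$ weight in $\|\cdot\|_{M,K,N}$), never from oscillation in $\theta$; and the exponent $\tfrac{n-1}2$ comes from $n$-fold integration by parts \emph{in $z$} on $|w|\le\tfrac32$, using that $|D_w\Phi|\ge\tfrac12$ there even though $\Re\Phi$ may vanish — each such step gains $(j+\tfrac12)^{-1}$ and costs $(j+\tfrac12)^{1/2}$ through the rescaled derivative of $a_{j-j'}$, for a net $(j+\tfrac12)^{-1/2}$. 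Finally, the exponent $\mu-\tfrac32$ comes from the exterior region $|w|\ge\tfrac52$, where $\Re\Phi\ge\tfrac14$ gives $e^{-(j+1/2)/4}$, converted into an arbitrary power $\mu$. Without the partition of unity in $|w|$ and the non-stationary-phase argument in $z$, the contribution of $\theta\approx\pi$, $z\approx0$ is not controlled, so the proposal as written has a genuine gap.
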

\begin{proof}
The desired estimates of $|\ket{j'}A\bra{j}|$, expressed as in \eqref{jprimeajo}, that is \eqref{bonbonoa} or \eqref{bonbona}, will be obtained by different methods, depending on the size of $|j'-j|$. 
\begin{enumerate}
\item for $|j'-j|^2> j+\tfrac12$, we will use \eqref{bonbono} and \eqref{ajjprimeo}, together with \eqref{alinfnity}, to estimate $|\ket{j'}A\bra{j}|$.
\item for $|j'-j|^2\leq j+\tfrac12$, we will use \eqref{bonbon} and \eqref{ajjprime}: we will perform a smooth partition of the domain of integration in $|z|$ in the integral present in the expression $A_{j',j}$ in \eqref{ajjprime}.
\begin{enumerate}
\item for $|z|\geq \frac52\sqrt{(j+\tfrac12)\hbar}$, the decay given by the negative imaginary part of the phase will be enough.
\item 
for $\sqrt{(j+\tfrac12)\hbar}\leq|z|\leq 3\sqrt{(j+\tfrac12)\hbar}$, we will have two cases:
\begin{enumerate}
\item $|j'-j|>0$, the desired estimates will be obtained by a brutal estimate of the integral of $|a_{j'-j}|$ on the domain.
\item $j+j'$, the stationary phase lemma applied to the integral on the domain will give the needed estimate.
\end{enumerate}
\item
for $|z|\leq \frac32\sqrt{(j+\tfrac12)\hbar}$, the non stationary phase Lemma will apply and give the result.
\end{enumerate}

\end{enumerate}
\vskip 1cm
Note that since $A$ is self-adjoint, one can restrict to the case $j'\leq j$ without loss of generality.
\vskip 1cm


\begin{enumerate}
\item Suppose $|j'-j|^2>j$.
By \eqref{bonbono}, \eqref{ajjprimeo} and \eqref{alinfnity} 
we get for any $\alpha$
{\bcr
\begin{eqnarray}
|\ket{j'}A\bra{j}|&\leq &
\tfrac{(2\pi)^{\frac32}\hbar}2(j+\tfrac12)^{1/4}(j'+\tfrac12)^{1/4}
 \|{a}_{j-j'}\|_{L^\infty(\bR^{2d})}
 \nonumber\\
&\leq &
\tfrac{(2\pi)^{\frac32}\hbar}2
\|{a}\|_{M,1+\tau+\epsilon,N}
(j+\tfrac12)^{1/4}(j'+\tfrac12)^{1/4}(|j-j'|+1)^{-1-\epsilon-\bvc}\nonumber\\
&\leq&
\tfrac{(2\pi)^{\frac32}\hbar}2
\|{a}\|_{M,1+\tau+\epsilon,N}(j+\tfrac12)^{1/2}((|j-j'|+1)^{-1-\alpha-\epsilon}(\sqrt j+1)^{-\bvc+\alpha}\nonumber\\
&\leq&\tfrac{(2\pi)^{\frac32}\hbar}2
\|{a}\|_{M,1+\tau+\epsilon,N}(|j-j'|+1)^{-1-\alpha-\epsilon}( j+1)^{-(\bvc-\alpha-1)/2}\nonumber\\
&\leq&\tfrac{(2\pi)^{\frac32}}2\hbar
\|{a}\|_{M,1+\tau+\epsilon,N}(|j-j'|+1)^{-1-\alpha-\epsilon}( (j+\tfrac12)\hbar+\tfrac12)^{-(\bvc-\alpha-1)/2}.\label{plus}
\end{eqnarray}
}

\vskip 1cm
\item

We now consider  the case $|j'-j|^2\leq j+\tfrac12$. We will use \eqref{bonbon} which becomes
\be\label{bonboninf}
|\ket{j'}A\bra{j}|
\leq
\frac{(j+1/2)^{1/2}}{(2\pi)^{\frac32}}
A_{j,j'}
\ee

We first perform a change of variable $z\to ((j+\tfrac12)\hbar)^{ 1/2}z$ in the integral inside the definition of $A_{j,j'}$ in \eqref{bonbon}.
 We get
 \begin{eqnarray}\label{ajjrpime31}
 A_{j,j'}&=&
\left|
\int_{\bR^2} d^2z
\int_{S^1}d\theta 
{a}_{j-j'}\big(\frac{z}{\sqrt2}\big)
e^{-\frac{\Phi_j(z,\bar z,\theta')}\hbar}
\right|\nonumber\\
&=&
(j+\tfrac12)\hbar
\left|
\int_{\bR^2} d^2z
\int_{S^1}d\theta 
{a}_{j-j'}\big(\tfrac{\sqrt{(j+\frac12)\hbar}}{\sqrt2}z\big)
e^{-(j+\frac12)\frac{\Phi(z,\bar z,\theta)}\hbar}
\right|.
\end{eqnarray}
{with }
\be\label{phiprime}
{\Phi(z,\bar {z},\theta)}
=
1+
\frac{z\bar{ z}}2+ e^{i\theta}+\bar {z}e^{i\theta}
+
z
+i\theta.
\ee

We first  perform a smooth decomposition of the identity $1=\chi^{\leq}+\chi^{=}+\chi^{\geq}$, such that, 
\be\label{chichi}
\left\{
\begin{array}{ccl}
\sup{\chi^{\leq}}&=&\{z,|z|\leq
\frac32
\}\\ &&\\
\sup{\chi^{=}}&=&\{z,
1
\leq |z|\leq 
3
\}\\ &&\\
\sup{\chi^{\geq}}&=&\{z,|z|\geq 
\frac52
\}
\end{array}
\right.
\ee
and decompose
$a_{j-j'}$ in three terms {\bcr $a^\bullet_{j-j'},
\ \bullet\in\{\leq,=,\geq\}$:
$$
a^\bullet_{j-j'}\big(\tfrac{\sqrt{(j+\tfrac12))\hbar}z}{\sqrt2}\big):=\chi^\bullet(z)  a_{j-j'}\big(\tfrac{\sqrt{(j+\tfrac12))\hbar}z}{\sqrt2}\big).
$$
}
Note that, since the $\chi^\bullet$ are smooth and constant for $|z|\geq 3$,
\be\label{faitch}
|D_z^\alpha a^\bullet_{j-j'}\big(\tfrac{\sqrt{(j+\tfrac12))\hbar}z}{\sqrt2}\big)|\leq
C_{\chi,|\alpha|}\sum_{|\beta|\leq|\alpha|}
|D_z^\beta a_{j-j'}\big(\tfrac{\sqrt{(j+\tfrac12))\hbar}z}{\sqrt2}\big)|
\ee
with
\be\label{cchi}
C_{\chi,|\alpha|}=2^{|\alpha|}\sum_{\substack{\bullet\in\{\leq,=,\geq\}\\|\beta|\leq|\alpha|}}\|D_z^\beta\chi^\bullet\|_{L^\infty(\bR^2)}.
\ee
Of course, $C_{\chi,0}=1$.
\begin{enumerate}
\item $\bullet=\ \geq$.

By \eqref{imphi}, $\Re\Phi(z,\bar z\theta,\theta') \geq\frac{1}4$ for $z$ in the support of $a^{\geq}_{j-j'}$, the latter gives a contribution to $A_{j,j'}$ (which is linear in $a$) in \eqref{ajjprime} equal to
{\bcr
\begin{eqnarray}\label{suppp}
A_{j,j'}^\geq&\leq& 2(2\pi)^2(j+\tfrac12)\hbar\|a_{j-j'}^\geq\|_{L^\infty(\bR^{2d})}\int_{|z|\geq \frac52}
e^{-(j+\frac12)|z+e^{i\theta}+e^{-i\theta'}|^2}dzd\bar zd\theta d\theta'.\nonumber\\
&\leq&
2(2\pi)^2(j+\tfrac12)\hbar\|a_{j-j'}^\geq\|_{L^\infty(\bR^{2d})}e^{-(j+\frac12)/4}\int_{|z|\geq \frac52}
e^{-(j+\frac12)(|z+e^{i\theta}+e^{-i\theta'}|^2-\frac14)}dzd\bar zd\theta d\theta'.\nonumber\\
&\leq&
2(2\pi)^2(j+\tfrac12)\hbar\|a_{j-j'}^\geq\|_{L^\infty(\bR^{2d})}e^{-(j+\frac12)/4}\int_{|z|\geq \frac52}
e^{-\frac12(|z+e^{i\theta}+e^{-i\theta'}|^2-\frac14)}dzd\bar zd\theta d\theta'\nonumber\\
&\leq&
2(2\pi)^2(j+\tfrac12)\hbar\|a_{j-j'}\|_{L^\infty(\bR^{2d})}e^{-(j+\frac12)/4}(2\pi)^2\int_{\bC}e^{\frac18}
e^{-\frac12(|z|^2}dzd\bar z\nonumber\\
&\leq&
2(2\pi)^5e^{\frac18}(j+\tfrac12)\hbar
\|{a}\|_{M,1+\epsilon+\tau,N}
(|j-j'|+1)^{-1-\epsilon-\tau}e^{-(j+\frac12)/4}.\nonumber
\end{eqnarray}

so that, by \eqref{defmkninv} again,
\begin{eqnarray}\label{moinssup}
|\ket{j'}\mbox{Wig}(a^\geq)\bra{j}|
&\leq&
2(2\pi)^{\frac72}e^{\frac18}
\hbar\|{a}\|_{M,1+\epsilon+\tau,N}
(|j-j'|+1)^{-1-\epsilon-\tau}
(j+\tfrac12)^{\frac32}e^{-\frac{j+\frac12}4}\nonumber\\
&\leq&
{2(2\pi)^{\frac72}e^{\frac18}(\tfrac{4\mu}e)^\mu }
\hbar\|{a}\|_{M,1+\epsilon+\tau,N}
(|j'-j|+1)^{-1-\epsilon-\bvc}((j+\tfrac12)\hbar+\tfrac12)^{\frac32-\mu} 
\end{eqnarray}
for all $\mu>\frac12$.
}
\vskip 1cm
\item $\bullet=\ =$

 We will use the stationary phase Lemma for estimating $A^=_{j,j}$ given by \eqref{ajjprime}.  
We get
\begin{eqnarray}
\partial_{z }\Phi(z,\bar z,\theta) 
&=&
\frac{\bar {z }}2+1\nonumber\\
\partial_{\bar {z }}\Phi(z,\bar z,\theta)
&=&
\frac{ z }2+e^{i\theta''}\nonumber\\
\partial_{\theta}\Phi (z,\bar z,\theta)
&=&
i( e^{i(\theta)}+\bar {z }e^{i\theta}+1),\nonumber
\end{eqnarray}
so that the critical point of $\Phi $ is at $\theta=0,z =-2$ and, at the critical point,

$$\Phi =0,\ d^2\Phi =\begin{pmatrix}
0&1/2&0\\
1/2&0&1\\
0&1&1
\end{pmatrix}\mbox{ and }\det(d^2\Phi )=-1/4.
$$

The stationary phase Lemma (with quadratic phase) gives immediately that, as $j\to\infty$ (independently of $\hbar$),
\begin{eqnarray}\label{moinsequal}
A^=_{j,j}&\leq& 
{\bcr \frac2{\pi^{\frac32}}}
\frac{(j+\tfrac12)\hbar}{(j+\tfrac12)^{3/2}}
\left(|a_{j'-j}^=(\sqrt{2(j+\tfrac12)\hbar})|\right.\nonumber\\
&&\left. +\frac1{16}\frac{
1
} {j+\tfrac12}
\sup_{\substack{|\alpha|=2\\
1\leq|z|\leq 3
}}
|D_z^\alpha a_{j'-j}^=\big(\sqrt{\tfrac{(j+\frac12)\hbar}2}z\big)|
\right),\ \ \ \nonumber\\
&\leq&{\bcr \frac2{\pi^{\frac32}}(1+\tfrac{C_{\chi,2}}{4})}
\|a\|_{M,1+\epsilon+\tau,N}(|j-j'|+1)^{-1-\epsilon-\tau}((j+\tfrac12)\hbar+1)^{-M}
 \nonumber
 \end{eqnarray}
 since, by \eqref{faitch}, 
 \begin{eqnarray}
 &&|D_z^\alpha a_{j'-j}^=\big(\sqrt{\tfrac{(j+\frac12)\hbar}2}z\big)|\nonumber\\
 &\leq&
 C_{\chi,|\alpha|}
 \sum_{|\beta|\leq|\alpha|}
|D_z^\beta a_{j-j'}\big(\tfrac{\sqrt{(j+\tfrac12))\hbar}z}{\sqrt2}\big)|
\nonumber\\
&\leq&
 C_{\chi,|\alpha|}
\sum_{|\beta|\leq|\alpha|}
\big(\tfrac{\sqrt{(j+\tfrac12))}}{\sqrt2}\big)^{|\beta|}
|\big((\sqrt\hbar D_z)^\beta a_{j-j'}\big)\big(\tfrac{\sqrt{(j+\tfrac12))\hbar}z}{\sqrt2}\big)|\nonumber\\
&\leq&
 C_{\chi,|\alpha|}2^{\frac{|\alpha|}2}(j+\tfrac12)^{\frac{|\alpha|}2}\sum_{|\beta|\leq|\alpha|}
\big(\tfrac{\sqrt{(j+\tfrac12))}}{\sqrt2}\big)^{|\beta|}
|\big((\sqrt\hbar D_z)^\beta a_{j-j'}\big)\big(\tfrac{\sqrt{(j+\tfrac12))\hbar}z}{\sqrt2}\big)|\nonumber\\
&\leq&
C_{\chi,|\alpha|}2^{\frac{|\alpha|}2}(j+\tfrac12)^{\frac{|\alpha|}2}
\|a\|_{M,1+\epsilon+\tau,N}(|j-j'|+1)^{-1-\epsilon-\tau}((j+\tfrac12)\hbar|z|^2+1)^{-M}\label{dingue}\\
&\leq&
C_{\chi,|\alpha|}2^{\frac{|\alpha|}2}(j+\tfrac12)^{\frac{|\alpha|}2}
\|a\|_{M,1+\epsilon+\tau,N}(|j-j'|+1)^{-1-\epsilon-\tau}((j+\tfrac12)\hbar+1)^{-M},\ |z|\geq 1.\nonumber
 \end{eqnarray}
 Here we have use the fact that $\sqrt{j+\tfrac12}\leq2(j+\tfrac12), j=0.1,\dots$.
 
 Therefore, by \eqref{defmkninv},
 \begin{eqnarray}\label{moinsequal}
|\ket{j'}\mbox{Wig}(a^=)\bra{j}|&\leq&
{\bcr \frac1{(2\pi)^{\frac32}}\frac2{\pi^{\frac32}}(1+\tfrac{C_{\chi,2}}{4})}
\hbar
\|a\|_{M,1+\epsilon+\tau,N}(|j-j'|+1)^{-1-\epsilon-\tau}((j+\tfrac12)\hbar+1)^{-M}\nonumber\\
&\leq&
\frac{1+\tfrac{C_{\chi,2}}{4}}{\sqrt2\pi^3}
\hbar
\|a\|_{M,1+\epsilon+\tau,N}(|j-j'|+1)^{-1-\epsilon-\tau}((j+\tfrac12)\hbar+1)^{-M}
\end{eqnarray}
%
\vskip 1cm
\item $\bullet=\ \leq$

Finally, we will estimate 
the contribution given by $a_{j-j'}^{\leq}$
by first estimating the integral  
$\int_{
} d^2z
{a}_{j-j'}^{\leq}\big(\frac {\sqrt{(j+\tfrac12)\hbar} z}{\sqrt2}\big)e^{-(j+\frac12){\Phi(z,\bar z,\theta)}}$ pointwise in $\theta$ by the non-stationary phase method. Indeed, 
\begin{eqnarray}
\partial_z\Phi
&=&
\frac{\bar z}2+
e^{-i\theta'}\\
\partial_{\bar z}\Phi
&=&
\frac{ z}2+e^{i\theta}.
\end{eqnarray}


Therefore, 
for $z\in\sup{a^\leq_{j-j'}}$,
$$
\frac12
\leq|D_z\Phi|\leq 
\frac72
.
$$
%
Using now that
$$
e^{-(j+\tfrac12){\Phi(z,\bar z,\theta)}}
=
\frac1{j+\tfrac12}
\frac1{|D_z\Phi|^2}D_z\Phi\cdot D_z
e^{-(j+\tfrac12){\Phi(z,\bar z,\theta)}}
$$
{\bcr Let
$$
D_\Phi:=D_z\cdot(D_z\Phi)|D_z\Phi|^{-2}=\big({|D_z\Phi|^{-2}}D_z\Phi\cdot  D_z\big)^*.
$$
We get by integration by part that, for any $n\in\bN$, }
{\color{black}
\begin{eqnarray}\label{nk}
A_{j,j'}&\leq&(j+\tfrac12)\hbar(j+\tfrac12)^{-n}\left|
\int_{
}
D_\Phi^n\left({a}_{j-j'}^{\leq}\big(\tfrac {\sqrt{(j+\tfrac12)\hbar}z}{\sqrt2}\big)\right)e^{-(j+\tfrac12){\Phi(z,\bar z,\theta)}} d^2z d\theta 
\right|\nonumber\\
&\leq& 
(j+\tfrac12)\hbar\frac{2\pi}{(j+\tfrac12)^{n}}
\sup_{\substack{|z|\leq \frac32\\(\theta,\theta')\in\bT^2}}\left| D_\Phi^n\left({a}_{j-j'}^{\leq}\big(\tfrac {\sqrt{(j+\tfrac12)\hbar}z}{\sqrt2}\big)\right)\right|\int 
e^{-(j+\tfrac12){\Re\Phi(z,\bar z,\theta,\theta')}} d^2z.\nonumber
\end{eqnarray}
One has
\begin{eqnarray}
&&\sup_{\substack{|z|\leq \frac32\\(\theta,\theta')\in\bT^2}}\left|D_\Phi^n
\left({a}_{j-j'}^{\leq}\big(\tfrac {\sqrt{(j+\tfrac12)\hbar}z}{\sqrt2}\big)\right)\right|\nonumber\\
&\leq&
2^n\sup_{\substack{|z|\leq \frac32\\(\theta,\theta')\in\bT^2}}\sup_{|\beta|+k\leq n}
|D^\beta
(\tfrac{\Phi(z)}{|D_z\Phi(z)|^2})^k
|
\sum_{|\alpha|\leq n}
\left| D_z^\alpha 
\left({a}_{j-j'}^{\leq}\big(\tfrac {\sqrt{(j+\tfrac12)\hbar}z}{\sqrt2}\big)\right)\right|\nonumber\\
&\leq&
2^n
C_\Phi
|
\sum_{|\alpha|\leq n}
\left| D_z^\alpha 
\left({a}_{j-j'}^{\leq}\big(\tfrac {\sqrt{(j+\tfrac12)\hbar}z}{\sqrt2}\big)\right)\right|\nonumber\\
&\leq&
2^nC_\Phi
C_{\chi,n}2^{\frac{n}2}(j+\tfrac12)^{\frac{n}2}
\|a\|_{M,1+\epsilon+\tau,N}(|j-j'|+1)^{-1-\epsilon-\tau}\nonumber
\end{eqnarray}
by \eqref{dingue}. 

We have used
 $(j+\tfrac12)^k\leq 2^n(j+\tfrac12)^n,\ j=0,1,\dots,\ 0\leq k\leq n$ and define
\be\label{defccc}
C_\Phi=\sup_{\substack{|z|\leq \frac32\\(\theta,\theta')\in\bT^2}}\sup_{|\beta|+k\leq n}
|D^\beta
(\tfrac{\Phi(z)}{|D_z\Phi(z)|^2})^k
|
.
\ee
 
Therefore
\begin{eqnarray}\label{nk}
A_{j,j'}
&\leq &
2^{\frac32 n}C_\Phi
C_{\chi,n}
\frac{2\pi^2}{(j+\tfrac12)^{\frac n2}}\hbar
\|a\|_{M,1+\epsilon+\tau,N}(|j-j'|+1)^{-1-\epsilon-\tau}
\nonumber
\nonumber,
\end{eqnarray}


so that,
for 
by \eqref{bonbona},
\begin{eqnarray}\label{moinsinf}
&&|\ket{j'}\mbox{Wig}(a_{j'-j}^{\leq})\bra{j}|\\
&\leq&
\frac{2^{\frac32 n}}{(2\pi)^{\frac32}}C_\Phi
C_{\chi,n}
\frac{2\pi^2}{(j+\tfrac12)^{\frac {n-1}2}}\hbar
\|a\|_{M,1+\epsilon+\tau,N}(|j-j'|+1)^{-1-\epsilon-\tau}\nonumber\\
&\leq&
C_\Phi
C_{\chi,n}\frac{2^{\frac32(n-1)}\pi^{\frac12}}{(j+\tfrac12)^{\frac {n-1}2}}\hbar
\|a\|_{M,1+\epsilon+\tau,N}(|j-j'|+1)^{-1-\epsilon-\tau}\nonumber\\
&\leq&
2^{\frac{n-1}2}C_\Phi
C_{\chi,n}
\hbar\|a\|_{M,1+\epsilon+\tau,N}(|j-j'|+1)^{-1-\epsilon-\tau}
((j+\tfrac12)+1)^{-\frac{n-1}2}\nonumber
\end{eqnarray}
}
%
%
\end{enumerate}
\end{enumerate}
{\bcr Adding the estimates given by \eqref{plus}, \eqref{moinssup}, \eqref{moinsequal} and \eqref{moinsinf}, we get  \eqref{moinsplus2}
with
\be\label{constot}
2\pi D_{\mu,n}=
\frac{(2\pi)^{3/2}}2
+
{2(2\pi)^{\frac72}e^{\frac18}(4\mu)^\mu e^{-\mu}}
+
\frac{1+\tfrac{C_{\chi,2}}{4}}{\sqrt2\pi^3}
+
2^{\frac{n-1}2}C_\Phi
C_{\chi,n}.
\ee
}
\end{proof}

\subsubsection{Hypothesis \ref{tightwig} in Theorem \ref{main1} $\Longrightarrow$  Condition \ref{iaj} in Theorem \ref{main2}}\label{tightonetotwo}\ 
 
{\bcr
By Proposition \ref{propmatesti} $\sqrt{R^{in}}$ satisfies the hypothesis \eqref{iaj} in Theorem \ref{main2} as soon as
$$
\min{(1+\alpha+\epsilon,1+\epsilon+\tau)}
=
2
+\epsilon,\ 
\min{(\tfrac{\bvc-\alpha-1}2,\mu-\tfrac32,M,\tfrac {n-1}2)}= 
{\color{black}\tfrac34}
+\epsilon,
$$
that is
$$
{\color{black} 
\alpha= 1,\ \tau= 
\tfrac72
+2\epsilon,\ \mu= 
\tfrac94
+\epsilon
, M= 
\tfrac34
+\epsilon,
 \ n=
3
}
$$ and 
\be\label{conda}
\|W_\hbar[\sqrt{R^{in}}]
\|_{
{\color{black}\frac34+\epsilon,\frac72+3\epsilon,3}
}\leq (2\pi\hbar)^{-\frac12}\frac{\mu'(\hbar)}{D_{\frac94+\epsilon,3}}
:=(2\pi\hbar)^{-\frac12}\mu(\hbar)
\ee
%
%

\subsubsection{Hypothesis 
\ref{semicwig} in Theorem \ref{main1} $\Longrightarrow$  Condition \ref{ioaj} in Theorem \ref{main2}}\label{semiconetotwo}\ 

Since
\begin{eqnarray}
W_\hbar[[x,\sqrt{R^{in}}]](q,p)&=&i\hbar\partial_pW_\hbar[\sqrt{R^{in}}](q,p)\nonumber\\
  W_\hbar[[-i\hbar\nabla,\sqrt{R^{in}}]](q,p)&=&-i\hbar\partial_qW_\hbar[\sqrt{R^{in}}](q,p),\nonumber
\end{eqnarray}
we get by Proposition \ref{propmatesti},
that the hypothesis \eqref{ioaj} in Theorem \ref{main2} is satisfied as soon as
\be\label{condnabla1}
\hbar\|\nabla W_\hbar[\sqrt{R^{in}}]
\|
_{
\frac34
+\epsilon,
\frac52
+3\epsilon,
3
}
\leq (2\pi\hbar)^{-\frac12}
\frac{\nu'(\hbar)}{D_{\frac94+\epsilon,3}}
\ee
It is easy to check that, by the definition \eqref{defmkn} of
 $\|a\|_{M,K,N}$
 $$
 \|\nabla a\|_{M,K,N}\leq \hbar^{-\frac12}2^K\|a\|_{M,K,N+1}.
 $$
Indeed, let us do the computation in the $z,\bar z$ variables. We have
$$
|(\partial_za)_k|=|\partial_za_{k+1}|
$$ 
so that, since $|k-1|+1\leq |k|+2=2(|k|/2+1)\leq 2(|k|+1)$,
\begin{eqnarray}
\|\partial_z a\|_{M,K,N}&=&\sup_{z\in\bR^{2d}}(|z|^2+1)^{M}(|k|+1)^{K}\sup_{0\leq |\alpha|\leq N}
 |(\sqrt\hbar D)^\alpha  \partial_za_{k(z)+1}|\nonumber\\
 &\leq &
 \sup_{z\in\bR^{2d}}(|z|^2+1)^{M}(|k-1|+1)^{K}
\sup_{0\leq |\alpha|\leq N}
 |(\sqrt\hbar D)^\alpha  \partial_za_{k}(z)|\nonumber\\
 &\leq & 
 2^K\hbar^{-\frac12}\sup_{z\in\bR^{2d}}(|z|^2+1)^{M}(|k|+1)^{K}
\sup_{0\leq |\alpha|\leq N+1}
 |(\sqrt\hbar D)^\alpha  a_{k}(z)|.\nonumber
\end{eqnarray}
  The term $\partial_{\bar z}a$ is obtained the same way.
 
Therefore \eqref{condnabla1} is satisfied as soon as

\be\label{condnabla2}
\hbar^{\frac12}\| W_\hbar[\sqrt{R^{in}}]
\|
_{
\frac34
+\epsilon,
\frac52
+3\epsilon,
4
}
\leq (2\pi\hbar)^{-\frac12}
\frac{\nu'(\hbar)}{2^{\frac52
+3\epsilon}D_{\frac94+\epsilon,4}}
:=
(2\pi\hbar)^{-\frac 12}
{\nu(\hbar)}
\ee

\vskip 1cm
%
%
\subsection{
The case of
any dimension $d$}\label{endproofd}\ 
Let us perform the change of variable 
$$
z_l\to((j_l+\tfrac12)\hbar)^\frac12z_l,\ l=1,\dots,d,
$$
 and decompose $a=\sum_{}a^{\pm\bullet_1,\dots,\pm\bullet_d}$, $\bullet_l\in\{\leq,=,\geq\},\ l=1,\dots,d$, where
$$
a^{\pm\bullet_1,\dots,\pm\bullet_d}_{j-j'}\big(\tfrac{\sqrt{(j+\tfrac12))\hbar}z}{\sqrt2}\big):
=\prod_{l=1}^d\chi^{\bullet_l}(z_l)\chi_{\bN}(\pm((j_l-j'_l)^2-j_l)  a_{j-j'}\big(\tfrac{\sqrt{(j+\tfrac12))\hbar}z}{\sqrt2}\big),
$$
where $\chi^\bullet$ is defined in \eqref{chichi}, $\chi_{\bN}$ is the characteristic function on $\bN$ and, by abuse of notation,
$$
\sqrt{(j+\tfrac12))\hbar}z:=(\sqrt{(j_1+\tfrac12))\hbar}z_1,\dots,\sqrt{(j_d+\tfrac12))\hbar}z_d).
$$

$\la j|\mbox{Wig}(a^{\pm\bullet_1,\dots,\pm\bullet_d}_{j-j'})|j'\ra$ will involve $d$ integrations in the variable $z_1,\dots,z_d$. 

\begin{enumerate}
\item For each $l$ such that the sign of $\bullet_l$ is positive, we will have $(j_l-j'_l)^2\geq j_l$ so that the argument of the case (1) in the proceeding section  apply (after a inverse change of variable $z_l\to((j_l+\tfrac12)\hbar)^{-\frac12}z_l$.
\item For each of the other variables $z_k$, the integral will be reduced to the domain of $\chi^{\bullet_k}$ and can be treated as in the cases (a), (b), (c) of the preceding section.
\item Combining in each dimension the different points (1), (2) (a), (2) (b) and (2) (c) of the proof  of Proposition \ref{propmatesti} in Section \ref{matesti}, we realize easily that
\eqref{moinsplus2}
becomes in dimension $d$
\begin{eqnarray}\label{moinsplusd}
&&|\ket{j'}\mbox{Wig}(a_{j-j'}\bra{j}|\leq (D_{\mu,n}\hbar)^d2^{(M+K)d}\nonumber\\
&&\times
\|a_{j-j'}\|_{M,1+\epsilon+\tau,n}||j'-j|+\tfrac12|^{-\min{(1+\alpha+\epsilon,1+\epsilon+\tau)}}((j+\tfrac12)\hbar+\tfrac12)^{-\min{(\frac{\bvc-\alpha-1}2,\mu-\frac32,M,\frac {n-1}2)}}.\nonumber
\end{eqnarray}
Therefore, $\sqrt{R^{in}}$ satisfies the hypothesis \eqref{iaj} in Theorem \ref{main2} as soon as
\be\label{condad}
\|W_\hbar[\sqrt{R^{in}}]
\|_{
{\color{black}\frac34+\epsilon,\frac72+3\epsilon,3}
}\leq (2\pi\hbar)^{-\frac d2}\frac{\mu'(\hbar)}{D_{\frac94+\epsilon,3}}
=(2\pi\hbar)^{-\frac d2}
{\mu(\hbar)}
\ee
By the same argument,  hypothesis \eqref{ioaj} in Theorem \ref{main2} is satisfied as soon as
\be\label{condnabla}
\hbar^{\frac12}\| W_\hbar[\sqrt{R^{in}}]
\|
_{
\frac34
+\epsilon,
\frac52
+3\epsilon,
4
}
\leq (2\pi\hbar)^{-\frac d2}
\frac{\nu'(\hbar)}{2^{\frac52
+3\epsilon}D_{\frac94+\epsilon,4}}=
(2\pi\hbar)^{-\frac d2}
{\nu(\hbar)}
\ee
\end{enumerate}
}
\subsection{End of the proof of item \textit{\eqref{m1}} in Theorem \ref{main} }\ 

We just proved that hypothesis $(\ref{tightwig})  $ and $(\ref{semicwig})$ in Theorem \ref{main1} imply conditions$(\ref{ajjprimeo})$ and  $(\ref{ajjprime})  $  in Theorem  \ref{main2} for 
$$
\mu'(\hbar)=D_{\frac94+\epsilon,3}\mu(\hbar),\ 
\nu'(\hbar)=D_{\frac94+\epsilon,4}\nu(\hbar).
$$
Therefore, by item $(\ref{m2})$, item $(\ref{m1})$ holds true for
\be\label{dm1}
D_{\ref{m1}}=D_{\frac94+\epsilon,4}D_{\ref{m2}}
\ee 
since $D_{\frac94+\epsilon,4}>D_{\frac94+\epsilon,3}>1$.

\section{The case of $N$ particles}\label{npart}
{\bcr As it was already mentioned, a peculiar feature of Theorem \ref{armaarma} compared to usual semiclassical/microlocal methods is the fact that it provides an upper bound of a quantity at time $t$ linear in the same quantity at time $t=0$, without any extra remainder term to estimate. We will use this fact in the case of an $N$-body problem with factorized initial data through the use of the following result.}
\begin{Lem}\label{ptilem}
Let $R_1,\dots,R_N$ be $N$ density matrices on $L^2(\bR^d)$ and let $f_1,\dots,f_N$  be $N$ probability densities on $\bR^{2d}$. Let us denote 
\begin{eqnarray}
R&=&R_1\otimes\dots\otimes R_N\nonumber\\
f(X,\Xi)&=&f_1(x_1,\xi_1)\dots f(x_N,\xi_n),\ X=(x_1,\dots,x_N), \chi=(\xi_1\dots\xi_n).
\end{eqnarray}

Then
$$
E_\hbar(f,R)^2\leq E_\hbar(f_1,R_1)^2+\dots+E_\hbar(f_N,R_N)^2.
$$
\end{Lem}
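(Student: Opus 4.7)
The plan is to build a tensor-product coupling from optimal (or near-optimal) couplings of each factor and to exploit the additivity of the cost operator $c_\hbar$ across tensor factors.

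First I would note that on $L^2(\bR^{dN}) = L^2(\bR^d)^{\otimes N}$ the transport cost decomposes as
$$c_\hbar(X,\Xi) = \sum_{j=1}^N c_\hbar^{(j)}(x_j,\xi_j),$$
where $c_\hbar^{(j)}(x_j,\xi_j) = \tfrac12(|x_j-y_j|^2+|\xi_j+i\hbar\nabla_{y_j}|^2)$ acts only on the $j$-th tensor factor and as the identity on the others. Given $\eta>0$, pick $Q_j \in \cC(f_j,R_j)$ with $\int \Tr(c_\hbar^{(j)}Q_j)\,dx_jd\xi_j \le E_\hbar(f_j,R_j)^2 + \eta/N$, and define
$$Q(X,\Xi) := Q_1(x_1,\xi_1)\otimes\cdots\otimes Q_N(x_N,\xi_N).$$

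Next I would verify that $Q \in \cC(f,R)$. The partial-trace property gives $\Tr(Q(X,\Xi)) = \prod_j \Tr_j(Q_j(x_j,\xi_j)) = \prod_j f_j(x_j,\xi_j) = f(X,\Xi)$, and the marginal is $\iint Q\,dXd\Xi = \bigotimes_j\iint Q_j\,dx_jd\xi_j = \bigotimes_j R_j = R$. So $Q$ is indeed a coupling.

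Then I would compute the cost. Because $c_\hbar^{(j)}$ acts only on factor $j$, and $\Tr_k(Q_k(x_k,\xi_k)) = f_k(x_k,\xi_k)$ for $k\neq j$,
$$\Tr\bigl(c_\hbar^{(j)}(x_j,\xi_j)\,Q(X,\Xi)\bigr) = \Tr_j\bigl(c_\hbar^{(j)}(x_j,\xi_j)Q_j(x_j,\xi_j)\bigr)\prod_{k\neq j} f_k(x_k,\xi_k).$$
Integrating over $(X,\Xi)$ and using $\int f_k\,dx_kd\xi_k = 1$ yields
$$\iint \Tr\bigl(c_\hbar(X,\Xi)Q(X,\Xi)\bigr)\,dXd\Xi = \sum_{j=1}^N \iint \Tr_j\bigl(c_\hbar^{(j)}Q_j\bigr)\,dx_jd\xi_j \le \sum_{j=1}^N E_\hbar(f_j,R_j)^2 + \eta.$$
Since $E_\hbar(f,R)^2$ is the infimum over all couplings, $E_\hbar(f,R)^2 \le \sum_j E_\hbar(f_j,R_j)^2 + \eta$, and letting $\eta\to 0$ gives the claim.

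The only subtle point is the minor issue that the infimum in the definition of $E_\hbar(f_j,R_j)$ may not be attained (whence the $\eta/N$ regularization above) and the measurability of the tensor-product coupling $Q$, but both are routine: the pointwise tensor product of measurable operator-valued maps on the finite-dimensional parameter spaces $\bR^{2d}$ is measurable, and the trace-class norms behave multiplicatively so the integrability of $\Tr(c_\hbar Q)$ follows directly from the integrability of each $\Tr_j(c_\hbar^{(j)} Q_j)$ combined with $\int f_k = 1$.
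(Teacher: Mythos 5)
Your proof is correct and follows essentially the same route as the paper: tensor the single-factor couplings, use the additivity of $c_\hbar$ across factors, and integrate out the spectator variables using $\int f_k=1$. Your $\eta$-regularization is in fact slightly more careful than the paper's argument, which simply takes $\pi^{op}_i$ to be an optimal coupling of $f_i$ and $R_i$ without addressing whether the infimum is attained.
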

\begin{proof}
Let 
$$
\Pi(X,\Xi)=\pi^{op}_1(x_1,\xi_1)\otimes\dots\otimes\pi^{op}_N(x_N,\xi_N),
$$
where $\pi^{op}_i$ is an optimal coupling of $f_i$ and $R_i$. Then it is easy to see that $\Pi$ is a coupling of $f$ and $R$ and therefore, writing $c_N(X,\Xi)=\sum\limits_{i=1}^Nc_i(x_i,\xi_i)$ where $c_i(x_i,\xi_i)=\frac12(|x_i-y_i|^2+|\xi_i+i\hbar\nabla_{y_i}|^2)$ on $L^2(\bR^{Nd},dy_1\dots dy_N)$,
\begin{eqnarray}
E_\hbar(f,R)^2&\leq& \int \Tr c_N(X,\Xi)\Pi(X,\Xi)dXd\Xi\nonumber\\
&=&
\sum_{i=1}^N\int\Tr{(c_i(x_i,\xi_i)\pi^{op}_i(x_i,\xi_i))}dx_id\xi_i\leq \sum_{i=1}^NE_\hbar(f_i,R_i)^2.\nonumber
\end{eqnarray}
\end{proof}
{\bcr Lemma \ref{ptilem} suggests to multiply $E_\hbar(f,R)^2$ by $\frac1N$ in the situation involving $N$ particles. Likewise we will divide  $\MKd^2$ by $N$  as  is customary, and the square of the distance  $\delta$ as well.

We will consider, as in \cite{FGPaul}, Section 2.3, the quantum evolution of interacting $N$ particles through the following von Neumann equation.
\be\label{vneN}
i\hbar\frac{d}{dt} R_N(t)=
[-\frac12\hbar^2\Delta_N+V_N,R_N(t)],\ R_N(0)=R^{in}_N
\ee
with the factorized initial condition 
\be\label{inicondN}
R^{in}_N=(R^{in})^{\otimes N}
\ee
where $R^{in}$ 
 is a density matrix on $L^2(\bR^d)$.
 
 In \eqref{vneN}, $\Delta_N$ is the Laplacien on $L^2(\bR^{Nd})$ and 
 \be\label{defpotN}
 V_N(x_1,\dots,x_N)=\frac1N\sum_{i<j}V(x_1-x_j),\ \ \  V\in C^{1,1}(\bR^d),\ V\ even,\ V(0)=0.
 \ee
 Under these conditions on $V$, the $N$-body dynamics is well defined (see \cite{FGPaul}, Section 2.3).

 \vskip 1cm 
 We denote by $\rho_N(t)$ the solution of the corresponding Liouville equation on $\bR^{2d}$ with initial condition $\widetilde W[R^{in}_N]=(W[R^{in}])^{\otimes N}$:
 \be\label{liuovN}
 \dot\rho_N=\{\tfrac12\sum_{i=1}^N\xi_i^2+\tfrac1N\sum_{i<j}V(x_i-x_j),\rho_N\},\ \ \ \rho(0)=\widetilde W[R^{in}],
 \ee
 and $\Phi_N^t$ the Hamiltonian flow of Hamiltonian $\tfrac12\sum\limits_{i=1}^Np_i^2+\tfrac1N\sum\limits_{i<j}V(q_i-q_j)$ so that
 \be\label{phitN}
 \rho_N(t)=\rho(0)\circ\Phi_N^{-t}.
 \ee
 Moreover we define again
 $$
 \lambda=\frac{1+\max{(4Lip(\nabla V)^2,1)}}2.
 $$

\begin{Thm}\label{mainN} Let either $W_\hbar[\sqrt{R^{in}}]$ satisfy the hypothesis of Theorem \ref{main1}, or $\sqrt{R^{in}}$ satisfy the hypothesis  of Theorem \ref{main2} or Theorem \ref{main3}. Let us suppose for simplicity that $\mu(\hbar)=\mu'(\hbar)=1,\ \nu(\hbar)=\nu'(\hbar)=\hbar$ (the general case is straightforward to state). Then 
$$
\tfrac1N\MKd(\widetilde W_\hbar[R_N(t)],\widetilde W_\hbar[R^{in}_N]\circ\Phi_N^{-t})^2\leq Ce^{
\lambda
|t|}\hbar,
$$
where $C$ is independent of $N$.

Moreover, if $\|W_\hbar[R^{in}]\|_{L^2(\bR^{2d})}\leq C'<1$ , uniformly in $\hbar$, that is to say  $\|R^{in}\|_2\leq C'(2\pi\hbar)^d$, then
$$
\frac1{\sqrt N}
\delta(W_\hbar[R(t)],W_\hbar[R^{in}]^{\otimes N}\circ\Phi_N^{-t})\leq
 C''
\sqrt\hbar(e^{
\lambda
|t|}+2e^{1+\Lip{(\nabla V)})|t|)}\tfrac{C^N}{\sqrt N})
$$ 
where $C''$ is independent of $N$.
\end{Thm}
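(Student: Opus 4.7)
The plan is to combine three ingredients already available: Lemma~\ref{ptilem} (sub-additivity of $E_\hbar^2$ on tensor products), the $N$-body extension of Theorem~\ref{armaarma} (i.e.\ Theorem~2.7 of \cite{FGPaul}, whose exponential rate $\lambda$ depends only on $\Lip(\nabla V)$ and not on $N$ thanks to the mean-field scaling built into \eqref{defpotN}), and Theorem~\ref{main} applied to the one-body density $R^{in}$. Setting $\rho_N(t):=\widetilde W[R_N^{in}]\circ\Phi_N^{-t}$, the $N$-body Gronwall estimate gives
\[
E_\hbar(\rho_N(t),R_N(t))^2\leq e^{\lambda|t|}E_\hbar(\rho_N^{in},R_N^{in})^2.
\]
Because $\rho_N^{in}=\widetilde W[R^{in}]^{\otimes N}$ and $R_N^{in}=(R^{in})^{\otimes N}$, Lemma~\ref{ptilem} yields $E_\hbar(\rho_N^{in},R_N^{in})^2\leq N\,E_\hbar(\widetilde W[R^{in}],R^{in})^2$, and Theorem~\ref{main} bounds the one-body quantity by $C\hbar$ (in each of the three hypothesis variants, the choice $\mu=\mu'=1$, $\nu=\nu'=\hbar$ makes $\max(\sqrt\hbar,\sqrt\hbar\mu,\sqrt{\mu\nu})\lesssim\sqrt\hbar$, and similarly $\max(\sqrt\hbar,\sqrt{\tau(\hbar)})\lesssim\sqrt\hbar$). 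The $N$-body version of Theorem~\ref{armaprop}(3) then gives $\MKd^2\leq 2E_\hbar^2$; dividing by $N$ yields the first statement.

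For the $\delta$ statement, I would insert the Husimi transforms as intermediate quantities via the triangle inequality
\[
\delta\bigl(W_\hbar[R_N(t)],W_\hbar[R_N^{in}]\circ\Phi_N^{-t}\bigr)\leq T_1+T_2+T_3
\]
with $T_1:=\delta(W_\hbar[R_N(t)],\widetilde W_\hbar[R_N(t)])$, $T_2:=\delta(\widetilde W_\hbar[R_N(t)],\widetilde W_\hbar[R_N^{in}]\circ\Phi_N^{-t})$, and $T_3:=\delta(\widetilde W_\hbar[R_N^{in}]\circ\Phi_N^{-t},W_\hbar[R_N^{in}]\circ\Phi_N^{-t})$. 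The middle term satisfies $T_2\leq \MKd(\widetilde W_\hbar[R_N(t)],\rho_N(t))$ (since the test-function constraints defining $\delta$ include $\Lip\leq 1$), and this Wasserstein quantity was already controlled by $\sqrt{2NC\hbar}\,e^{\lambda|t|/2}$ in the first part.

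For the Wigner--Husimi gap terms $T_1$ and $T_3$, the identity $\int(W-\widetilde W)f=\int W(1-e^{\hbar\Delta/4})f$ combined with the Fourier-multiplier bound $|1-e^{-\hbar|\xi|^2/4}|\leq\tfrac12\sqrt\hbar\,|\xi|$ and Cauchy--Schwarz gives $\delta(W_\hbar[R],\widetilde W_\hbar[R])\leq\tfrac12\sqrt\hbar\,\|W_\hbar[R]\|_{L^2}$ whenever $\|\nabla f\|_{L^2}\leq 1$. For $T_1$: unitarity of the von Neumann flow preserves the Hilbert--Schmidt norm, so $\|W_\hbar[R_N(t)]\|_{L^2}=\|W_\hbar[R_N^{in}]\|_{L^2}\leq(C')^N$ by the factorized hypothesis. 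For $T_3$: the measure-preserving change of variables $z\mapsto \Phi_N^t z$ recasts a test function $f$ against $W_\hbar[R_N^{in}]\circ\Phi_N^{-t}$ as $f\circ\Phi_N^t$ against $W_\hbar[R_N^{in}]$; the $L^2$ norm is unchanged but the gradient constraint becomes $\|\nabla(f\circ\Phi_N^t)\|_{L^2}\leq\|D\Phi_N^t\|_\infty\leq e^{(1+\Lip(\nabla V))|t|}$ by standard Gronwall applied to the variational equation of the Hamiltonian flow. This produces exactly the growth factor $e^{(1+\Lip(\nabla V))|t|}$ in front of $(C')^N$. Summing $T_1+T_2+T_3$ and dividing by $\sqrt N$ yields the announced estimate, with the constant $C$ of the statement identified to $C'$.

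The main obstacle is the $N$-independence of all constants: Lemma~\ref{ptilem} handles the tensor-product structure, but it is essential that the Gronwall rate in Theorem~\ref{armaarma}/\cite{FGPaul} Theorem~2.7 depend only on $\Lip(\nabla V)$ (and not on the full $N$-body Lipschitz constant of $\nabla V_N$), which is exactly what the $1/N$ mean-field normalization in \eqref{defpotN} ensures. The secondary technical point is the propagation of the $\delta$-test-function class through the classical flow, which leads to the $e^{(1+\Lip(\nabla V))|t|}$ factor but does not interact with $N$ beyond the exponential decay coming from the factorized $L^2$ norm of $W_\hbar[R^{in}]$.
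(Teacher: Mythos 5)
Your proof is correct and follows the paper's own route: the first estimate is obtained exactly as in the paper from Theorem 2.7 of \cite{FGPaul} combined with Lemma \ref{ptilem}, Theorem \ref{armaprop}(3) and Theorem \ref{main}, and your $T_1+T_2+T_3$ decomposition for the second estimate reproduces the proof of Theorem \ref{ineqphi} (which the paper simply cites at this point), adapted to the $N$-body flow. Your explicit verification that the Gronwall rate and the Lipschitz constant of $\mathrm{d}\Phi_N^t$ are $N$-uniform thanks to the mean-field $1/N$ normalization in \eqref{defpotN}, and that $\|W_\hbar[R_N^{in}]\|_{L^2}=\|W_\hbar[R^{in}]\|_{L^2}^N$ propagates unchanged under the unitary evolution, are precisely the points the paper leaves implicit, and you handle them correctly.
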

\begin{proof}
By Theorem 2.7 in \cite{FGPaul} in the case $n=N$ we know that, by Lemma \ref{ptilem},
$$
\tfrac1N E(\widetilde W_\hbar[R^{in}_N\circ\Phi_N(t)], R_N(t))^2\leq e^{\lambda|t|}
\tfrac1NE(\widetilde W_\hbar[R^{in}_N], R_N^{in})^2
\leq e^{\lambda|t|} E(\widetilde W_\hbar[R^{in}], R^{in})^2.
$$
Theorem \ref{armaprop}, (3) and Theorem \ref{main}, $I-III$, give then the first inequality of Theorem \ref{mainN}.

Theorem \ref{ineqphi} gives directly the second one.
\end{proof}
\vskip 1cm
}
\begin{appendix}

\section{Proof of the triangle inequality}\label{profftriang}
In this section, we prove Theorem \ref{armatriang}.

The proof makes use of some inequalities between the (classical and/or quantum) transportation cost operators. We begin with an elementary, but useful lemma, which can be viewed as the Peter-Paul
inequality for operators.

\begin{Lem}\lb{L-PP}
Let $T,S$ be unbounded self-adjoint operators on $\fH=L^2(\bR^n)$, with domains $\Dom(T)$ and $\Dom(S)$ respectively such that $\Dom(T)\cap\Dom(S)$ is dense in $\fH$. Then, for all $\a>0$, one has
$$
\la v|TS+ST|v\ra\le\a\la v|T^2|v\ra+\frac1\a\la v|S^2|v\ra\,,\qquad\text{ for all }v\in\Dom(T)\cap\Dom(S)\,.
$$
\end{Lem}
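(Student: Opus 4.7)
The plan is to deduce the inequality from the manifest non-negativity of $\|\sqrt{\alpha}\,Tv - \tfrac{1}{\sqrt{\alpha}}\,Sv\|^2$, i.e.\ the operator analogue of the elementary scalar inequality $2ab \le \alpha a^2 + \alpha^{-1} b^2$. For any $v \in \Dom(T) \cap \Dom(S)$, both $Tv$ and $Sv$ lie in $\fH$, so this vector and its squared norm are well-defined.

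Expanding the squared norm gives
\[
0 \le \|\sqrt{\alpha}\,Tv - \tfrac{1}{\sqrt{\alpha}}\,Sv\|^2 = \alpha\,\|Tv\|^2 - \la Tv, Sv\ra - \la Sv, Tv\ra + \tfrac{1}{\alpha}\,\|Sv\|^2.
\]
Since $T$ and $S$ are self-adjoint, $\|Tv\|^2 = \la v, T^2 v\ra$ and $\|Sv\|^2 = \la v, S^2 v\ra$, and the real quantity $\la Tv, Sv\ra + \la Sv, Tv\ra$ is precisely the natural interpretation of the symmetric quadratic form $\la v | TS + ST | v\ra$. Rearranging yields the stated inequality.

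The only subtlety worth noting is a domain matter rather than a genuine obstacle: the operator $TS + ST$ need not be defined on all of $\Dom(T) \cap \Dom(S)$, since $Sv$ need not lie in $\Dom(T)$. However, the quadratic form $v \mapsto \la Tv, Sv\ra + \la Sv, Tv\ra$ is well-defined on all of $\Dom(T) \cap \Dom(S)$, and it is this quadratic form (which coincides with $\la v, (TS+ST)v\ra$ whenever the latter makes sense) that the notation $\la v | TS + ST | v \ra$ refers to in the statement. With this reading, the argument above is complete in one line of computation.
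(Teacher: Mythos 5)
Your proof is correct and is essentially identical to the paper's: both expand the nonnegative quantity $\|\sqrt{\alpha}\,Tv-\tfrac{1}{\sqrt{\alpha}}\,Sv\|^2$ and use self-adjointness to identify $\|Tv\|^2=\la v|T^2|v\ra$ and $\|Sv\|^2=\la v|S^2|v\ra$. Your remark on reading $\la v|TS+ST|v\ra$ as the quadratic form $\la Tv,Sv\ra+\la Sv,Tv\ra$ matches the form-domain convention the paper adopts for such inequalities.
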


\begin{proof}

Indeed, for each $\a>0$ and each $v\in\Dom(T)\cap\Dom(S)$, one has
$$
\ba
\a\la v|T^2|v\ra+\tfrac1\a\la v|S^2|v\ra-\la v|TS+ST|v\ra
\\
=|\sqrt{\a} Tv|^2+|\tfrac1{\sqrt{\a}}Sv|^2-\la\sqrt{\a}Tv|\tfrac1{\sqrt{\a}}Sv\ra-\la\tfrac1{\sqrt{\a}}Sv|\sqrt{\a}Tv\ra
\\
=\left|\sqrt{\a} Tv-\tfrac1{\sqrt{\a}}Sv\right|^2\ge 0&\,.
\ea
$$
\end{proof}

Let us redefine the cost $c_\hbar(x,\xi)$ that way
\be\label{defcpseudo}
c(x,\xi;z,\hb D_z):=|x-z|^2+|\xi-\hbar D_z|^2.
\ee
\begin{Lem}\lb{L-IneqCost}
For each $x,\xi,y,\eta,z\in\bR^d$ and each $\a>0$, one has
$$
\ba
c(x,\xi;z,\hb D_z)\le(1+\a)(|x-y|^2+|\xi-\eta|^2)+(1+\tfrac1\a)c(y,\eta;z,\hb D_z)\,,
\\
|x-z|^2+|\xi-\zeta|^2\le(1+\a)c(x,\xi;y,\hb D_y)+(1+\a)c(z,\zeta;y,\hb D_y)\,.
\ea
$$
\end{Lem}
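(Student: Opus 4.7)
Both bounds reduce to applying the Peter-Paul type inequality of Lemma \ref{L-PP} separately to the position and momentum parts of the cost $c$. The strategy is to insert a triangle identity, square it, and control the cross term by Peter-Paul.

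For the first inequality, I would decompose
\[ x - z = (x-y) + (y-z), \qquad \xi - \hbar D_z = (\xi-\eta) + (\eta - \hbar D_z), \]
the first in $\bR^d$ and the second as self-adjoint operators on $L^2(\bR^d)$. Squaring and using that $\xi - \eta$ is a scalar (hence commutes with $\eta - \hbar D_z$) gives
\[ |\xi - \hbar D_z|^2 = |\xi-\eta|^2 + 2(\xi-\eta)\cdot(\eta - \hbar D_z) + |\eta - \hbar D_z|^2. \]
Lemma \ref{L-PP} bounds the cross term above by $\alpha|\xi-\eta|^2 + \tfrac{1}{\alpha}|\eta - \hbar D_z|^2$, while the position part follows from the classical scalar Peter-Paul inequality in $\bR^d$. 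Adding the two gives the first assertion.

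For the second inequality, I would use the dual decomposition
\[ x - z = (x-y) - (z-y), \qquad \xi - \zeta = (\xi - \hbar D_y) - (\zeta - \hbar D_y). \]
The key point is that $\hbar D_y$ cancels in the operator difference, so $\xi - \zeta$ remains a scalar (as it must, since the left-hand side is scalar). Squaring gives
\[ |\xi - \zeta|^2 = |\xi - \hbar D_y|^2 + |\zeta - \hbar D_y|^2 - \{\xi - \hbar D_y,\, \zeta - \hbar D_y\}, \]
and the anticommutator is bounded below by $-\alpha|\xi - \hbar D_y|^2 - \tfrac{1}{\alpha}|\zeta - \hbar D_y|^2$ via the sign variant $(\sqrt{\alpha}\,T + \tfrac{1}{\sqrt{\alpha}}S)^2 \ge 0$ of the completed square. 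Combined with scalar Peter-Paul for $|x-z|^2$, this yields the operator inequality with coefficients $(1+\alpha)$ and $(1+1/\alpha)$; the stated form with $(1+\alpha)$ on both terms follows by weakening when $\alpha \ge 1$ (at $\alpha = 1$ both coefficients equal $2$).

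I do not expect a serious obstacle. The only nuance is tracking signs in the operator Peter-Paul step for the second inequality, where one dominates $-\{T,S\}$ rather than $\{T,S\}$ by using the $+$ variant of the completed square in place of the $-$ variant.
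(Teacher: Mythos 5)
Your proposal is correct and follows essentially the same route as the paper: the triangle decomposition of the cost followed by the operator Peter--Paul inequality (Lemma \ref{L-PP}) applied to the cross terms, with the second inequality ``proved in the same way'' via the decomposition through $\hbar D_y$. Your remark that the natural coefficients for the second inequality are $(1+\a)$ and $(1+\tfrac1\a)$, so that the stated symmetric form with $(1+\a)$ on both terms only follows for $\a\ge 1$, is a legitimate and worthwhile observation (the symmetric form is in fact false for $\a<1$, as a scalar example with $y$ the midpoint of $x$ and $z$ shows).
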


These two inequalities are of the form $A\le B$ where $A$ and $B$ are unbounded self-adjoint operators on $L^2(\bR^n)$ for some $n\ge 1$, with 
$$
\cW:=\{\psi\in H^1(\bR^n)\text{ s.t. }|x|\psi\in\cH\}\subset\Dom_f(A)\cap\Dom_f(B)\,,
$$
denoting by $\Dom_f(A)$ (resp. $\Dom_f(B)$) the form-domain of $A$ (resp. of $B$) --- see \S VIII.6 in \cite{RS1} on pp. 276--277. The inequality $A\le B$ means that the bilinear form associated to $B-A$ is nonnegative, i.e. that
$$
\la w|A|w\ra\le\la w|B|w\ra\,,\qquad\text{ for all }w\in\cW\,.
$$

\begin{proof}
These two inequalities are proved in the same way. Let us prove for instance the first inequality:
$$
\ba
c(x,\xi;z,\hb D_z)=&|x-y+y-z|^2+|\xi-\eta+\eta-\hb D_z|^2
\\
=&|x-y|^2+|\xi-\eta|^2+c(y,\eta;z,\hb D_z)
\\
&+2(x-y)\cdot(y-z)+2(\xi-\eta)\cdot(\eta-\hb D_z)\,.
\ea
$$
By Lemma \ref{L-PP} with $T=(x-y)\mathbb I_z$ and $S=\eta-\hb D_z$ we get
$$
\ba
2(x-y)\cdot(y-z)+2(\xi-\eta)\cdot(\eta-\hb D_z)
\\
\le\a (|x-y|^2+|\xi-\eta|^2)+\frac1\a c(y,\eta;z,\hb D_z)&\,,
\ea
$$
which concludes the proof of the first inequality. 
\end{proof}
Finally we will need the following proposition whose proof is postponed to the end of this section.
\begin{Prop}\label{C-Energ}
Let $A=A^*\ge 0$ be an unbounded self-adjoint operator on $\cH$ with domain $\Dom(A)$, and let $E$ be its spectral decomposition.
Let $T\in\cL^1(\fH)$ satisfy $T=T^*\ge 0$, and let $(e_j)_{j\ge 1}$ be a complete orthonormal system of eigenvectors of $T$ with $Te_j=\tau_je_j$ and $\tau_j\in[0,+\infty)$ for each $j\ge 1$.

Assume that
\be\lb{TrTA}
\sum_{j\ge 1}\tau_j\int_0^\infty \l\la e_j|E(d\l)|e_j\ra<\infty\,.
\ee
Let $\Phi_n:\,\bR_+\to\bR_+$ be a sequence of continuous, bounded and nondecreasing functions such that
$$
0\le\Phi_1(r)\le\Phi_2(r)\le\ldots\le\Phi_n(r)\to r\quad\text{ as }n\to\infty\,.
$$
Set
$$
\Phi_n(A):=\int_0^\infty\Phi_n(\l)E(d\l)\in\cL(\cH)\,.
$$
Then $\Phi_n(A)=\Phi_n(A)^*\ge 0$ for each $n\ge 1$ and
the sequence $T^{1/2}\Phi_n(A)T^{1/2}$ converges weakly to $T^{1/2}AT^{1/2}$ as $n\to\infty$. Moreover
$$
\Tr_\cH(T\Phi_n(A))\to\Tr_\cH(T^{1/2}AT^{1/2})\qquad\text{ as }n\to\infty\,.
$$
\end{Prop}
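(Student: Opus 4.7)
The plan is to reduce both assertions to monotone convergence for spectral integrals. First, each $\Phi_n(A):=\int_0^\infty\Phi_n(\l)\,E(d\l)$ is a bounded nonnegative self-adjoint operator by continuous functional calculus (since $\Phi_n$ is continuous, bounded and $\ge 0$), so $T^{1/2}\Phi_n(A)T^{1/2}$ is bounded and nonnegative for every $n$. The crucial preliminary observation is that hypothesis \eqref{TrTA} rewrites as $\sum_j\tau_j\|A^{1/2}e_j\|^2<\infty$, which is exactly the squared Hilbert--Schmidt norm of $A^{1/2}T^{1/2}$ computed in the eigenbasis of $T$. A standard Cauchy--Schwarz argument applied to the formal series $\sum_j\la e_j|v\ra\sqrt{\tau_j}\,A^{1/2}e_j$ then shows $T^{1/2}\fH\subset\Dom(A^{1/2})$ and that $A^{1/2}T^{1/2}$ extends to a bounded, indeed Hilbert--Schmidt, operator on $\fH$. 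Consequently, $T^{1/2}AT^{1/2}=(A^{1/2}T^{1/2})^*(A^{1/2}T^{1/2})$ is trace class, in particular bounded, which makes all the limits below unambiguous.

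For the weak convergence, I would fix $v\in\fH$ and introduce the finite positive Borel measure $\mu_v(d\l):=d\la T^{1/2}v|E(\l)|T^{1/2}v\ra$ on $\bR_+$. By the spectral theorem,
\[
\la v|T^{1/2}\Phi_n(A)T^{1/2}|v\ra=\int_0^\infty\Phi_n(\l)\,\mu_v(d\l).
\]
Since $0\le\Phi_n(\l)\nearrow\l$ pointwise, monotone convergence delivers the limit $\int_0^\infty\l\,\mu_v(d\l)=\|A^{1/2}T^{1/2}v\|^2=\la v|T^{1/2}AT^{1/2}|v\ra$. Polarization upgrades this to $\la u|T^{1/2}\Phi_n(A)T^{1/2}|v\ra\to\la u|T^{1/2}AT^{1/2}|v\ra$ for all $u,v\in\fH$, which is the asserted weak convergence.

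For the trace, I would expand in the basis $(e_j)$ of eigenvectors of $T$. With $\mu_j(d\l):=d\la e_j|E(\l)|e_j\ra$,
\[
\Tr_\cH(T\Phi_n(A))=\sum_{j\ge 1}\tau_j\la e_j|\Phi_n(A)|e_j\ra=\sum_{j\ge 1}\tau_j\int_0^\infty\Phi_n(\l)\,\mu_j(d\l).
\]
All integrands and summands are nonnegative and nondecreasing in $n$, so Tonelli combined with a double application of monotone convergence identifies the limit as $\sum_{j\ge 1}\tau_j\int_0^\infty\l\,\mu_j(d\l)$, which is finite by \eqref{TrTA}. Since $\la e_j|T^{1/2}AT^{1/2}|e_j\ra=\tau_j\|A^{1/2}e_j\|^2=\tau_j\int_0^\infty\l\,\mu_j(d\l)$ and $(e_j)$ is an orthonormal basis, this sum is precisely $\Tr_\cH(T^{1/2}AT^{1/2})$.

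There is no substantial analytic obstacle: the only care needed is the preliminary verification that $T^{1/2}AT^{1/2}$ makes sense as a bounded (indeed trace-class) operator under hypothesis \eqref{TrTA}, after which both statements follow from two-line spectral-integral computations.
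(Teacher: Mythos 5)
Your proposal is correct, and it takes a genuinely different and more economical route than the paper. The key move you make that the paper does not is to read hypothesis \eqref{TrTA} as the statement that $A^{1/2}T^{1/2}$ is Hilbert--Schmidt: since $\sum_j\tau_j\|A^{1/2}e_j\|^2<\infty$, the absolutely convergent series $\sum_j\la e_j|v\ra\sqrt{\tau_j}\,A^{1/2}e_j$ together with the closedness of $A^{1/2}$ gives $T^{1/2}\fH\subset\Dom(A^{1/2})$, and then $T^{1/2}AT^{1/2}=(A^{1/2}T^{1/2})^*(A^{1/2}T^{1/2})$ is trace class \emph{a priori}. Once the limit object is identified in this way, both assertions reduce to scalar monotone convergence for the measures $\la T^{1/2}v|E(d\l)|T^{1/2}v\ra$ and $\la e_j|E(d\l)|e_j\ra$, plus polarization and Tonelli. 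The paper instead does not presuppose that the limit is bounded: it shows the quadratic forms $\la x|R_n|x\ra$ with $R_n=T^{1/2}\Phi_n(A)T^{1/2}$ increase pointwise, invokes Baire's theorem and the Riesz representation theorem to produce a bounded weak limit $R$, uses the noncommutative Fatou lemma to get $R\in\cL^1$, and only at the end identifies $R$ with $T^{1/2}AT^{1/2}$ by showing $\|T^{1/2}AT^{1/2}-R_n\|_1\to0$. Your argument is shorter and avoids the Baire/Riesz/Fatou machinery entirely; the paper's version yields, as a by-product, convergence in trace norm rather than merely weak convergence plus convergence of traces (though for positive operators the latter two imply the former, so nothing essential is lost). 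The only points to state explicitly in a polished write-up are the closedness of $A^{1/2}$ in the domain argument and the harmless case $\tau_j=0$, where $e_j$ need not lie in $\Dom(A^{1/2})$ but $T^{1/2}e_j=0$.
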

\vskip 1cm
\begin{proof}[Proof of Theorem \ref{armatriang}]\ 

We start by the following  ``disintegration" result.
\begin{Lem}\lb{L-Disint}
Let $f\in\cP^{ac}(\bR^d\times\bR^d)$, let $R\in\cD(\fH)$ and let $Q\in\cC(f,R)$. There exists a $\si(\cL^1(\fH),\cL(\fH))$ weakly measurable function $(x,\xi)\mapsto Q_f(x,\xi)$ defined a.e. on $\bR^d\times\bR^d$ with values in $\cL^1(\fH)$ 
such that
$$
Q_f(x,\xi)=Q^*_f(x,\xi)\ge 0\,,\quad\Tr(Q_f(x,\xi))=1\,,\quad\hbox{ and }Q(x,\xi)=f(x,\xi)Q_f(x,\xi)
$$
for a.e. $(x,\xi)\in\bR^d\times\bR^d$.
\end{Lem}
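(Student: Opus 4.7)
The plan is to define $Q_f$ by pointwise normalization of $Q$ against the scalar density $f$: fix once and for all an auxiliary density operator $Q_0\in\cD(\fH)$ (for instance $Q_0:=|0,\hbar\ra\la 0,\hbar|$) and set
\[
Q_f(x,\xi):=\begin{cases}Q(x,\xi)/f(x,\xi),&\text{if }f(x,\xi)>0,\\ Q_0,&\text{if }f(x,\xi)=0.\end{cases}
\]
All three conclusions — positivity, unit trace, and the factorization $Q=f\,Q_f$ — should then drop out of the defining assumptions on $Q\in\cC(f,R)$, namely $Q(x,\xi)=Q(x,\xi)^*\ge 0$ in $\cL^1(\fH)$ with $\Tr(Q(x,\xi))=f(x,\xi)$ for a.e.\ $(x,\xi)$.

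First I would check the identity $Q=f\,Q_f$ on the set $\{f=0\}$. Since $Q(x,\xi)\ge 0$ is trace-class with vanishing trace on that set, it vanishes a.e.\ there, so both sides are zero; on $\{f>0\}$ the identity is tautological from the definition of $Q_f$. The self-adjointness and positivity $Q_f(x,\xi)=Q_f(x,\xi)^*\ge 0$ are preserved by division by the positive scalar $f(x,\xi)$, and the normalization $\Tr(Q_f(x,\xi))=1$ comes from $\Tr Q=f$ on $\{f>0\}$ and by construction on $\{f=0\}$.

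The remaining and main technical point is $\sigma(\cL^1(\fH),\cL(\fH))$ weak measurability of $(x,\xi)\mapsto Q_f(x,\xi)$. By the very definition of $\cC(f,R)$ the map $(x,\xi)\mapsto\Tr(B\,Q(x,\xi))$ is Lebesgue measurable for every $B\in\cL(\fH)$; the density $f$ is measurable (being in $\cP^{ac}(\bR^{2d})$), so the level sets $\{f>0\}$ and $\{f=0\}$ are measurable. Therefore
\[
\Tr(B\,Q_f(x,\xi))=\indc_{\{f>0\}}(x,\xi)\,\frac{\Tr(B\,Q(x,\xi))}{f(x,\xi)}+\indc_{\{f=0\}}(x,\xi)\,\Tr(B\,Q_0)
\]
is a measurable function of $(x,\xi)$ for each $B\in\cL(\fH)$, which is exactly the required weak measurability. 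Division by a possibly small $f$ creates no difficulty, since no $L^1$ or operator-norm bound on $Q_f$ itself is demanded — the conclusion is purely pointwise a.e. The deepest fact used is that a positive trace-class operator of zero trace is itself zero; apart from this, the disintegration is essentially scalar because $f$, being in $\cP^{ac}$, is a genuine measurable function rather than a general measure.
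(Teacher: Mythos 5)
Your proof is correct and follows essentially the same route as the paper's: normalize $Q$ by $f$ where $f>0$, fill in an arbitrary fixed density operator on $\{f=0\}$ (the paper writes this as the single formula $(Q+\indc_{\cN}|u\ra\la u|)/(f_1+\indc_{\cN})$ with a Borel representative $f_1$, but it is the same construction), and check weak measurability by testing against $B\in\cL(\fH)$. The key observation you isolate — that a positive trace-class operator with zero trace vanishes, so $Q=0$ a.e.\ on $\{f=0\}$ — is exactly what the paper's verification of $Q=fQ_f$ also rests on.
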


\begin{proof}[Proof of the Lemma]\ 

Let $f_1$ be a Borel measurable function defined on $\bR^d\times\bR^d$ and such that $f(x,\xi)=f_1(x,\xi)$ for a.e. $(x,\xi)\in\bR^d\times\bR^d$. Let $\cN$ be the Borel measurable set defined as follows: 
$\cN:=\{(x,\xi)\in\bR^d\times\bR^d\hbox{ s.t. }f(x,\xi)=0\}$, and let $u\in\fH$ satisfy $|u|=1$.
Consider the function
$$
(x,\xi)\mapsto Q_f(x,\xi):=\frac{Q(x,\xi)+\indc_\cN(x,\xi)|u\ra\la u|}{f_1(x,\xi)+\indc_\cN(x,\xi)}\in\cL(\fH)
$$
defined a.e. on $\bR^d\times\bR^d$. The function $f_1+\indc_{\cN}>0$ is Borel measurable on $\bR^d\times\bR^d$ while $(x,\xi)\mapsto\la\phi|Q(x,\xi)|\psi\ra$ is measurable and defined a.e. on $\bR^d\times\bR^d$ for each $\phi,\psi\in\fH$.
Set $\cA:\,\cL(\fH)\times(0,+\infty)\ni(T,\l)\mapsto\l^{-1}T\in\cL(\fH)$; since $\cA$ is continuous, the function $Q_f:=\cA(Q+\indc_\cN\otimes|u\ra\la u|,f_1+\indc_\cN)$ is weakly measurable on $\bR^d\times\bR^d$. Since $f_1+\indc_\cN>0$, 
and since $Q(x,\xi)=Q^*(x,\xi)\ge 0$, one has $(Q(x,\xi)+\indc_\cN\otimes|u\ra\la u|)^*=Q(x,\xi)+\indc_\cN\otimes|u\ra\la u|\ge 0$ for a.e. $(x,\xi)\in\bR^d\times\bR^d$. On the other hand, for a.e. $(x,\xi)\in\bR^d\times\bR^d$, one has
$\Tr(Q(x,\xi)+\indc_\cN\otimes|u\ra\la u|)=f(x,\xi)+\indc_\cN(x,\xi)$, so that $\Tr(Q_f(x,\xi))=1$. Finally
$$
f(x,\xi)Q_f(x,\xi)=\frac{f(x,\xi)Q(x,\xi)}{f_1(x,\xi)+\indc_\cN(x,\xi)}=Q(x,\xi)\quad\hbox{ for a.e. }(x,\xi)\in\bR^d\times\bR^d\,,
$$
since $f=f_1$ a.e. on $\bR^d\times\bR^d$ and $\indc_\cN(x,\xi)=0$ for a.e. $(x,\xi)\in\bR^d\times\bR^d$ such that $f(x,\xi)>0$. Since $Q_f$ satisfies $\Tr(Q_f(x,\xi))=1$ for a.e. $(x,\xi)\in\bR^d\times\bR^d$ and is weakly measurable on 
$\bR^d\times\bR^d$, it is $\si(\cL^1(\fH),\cL(\fH))$ weakly measurable.
\end{proof}
By Theorem 2.12 in chapter 2 of \cite{VillaniAMS}, there exists an optimal coupling for $W_2(f,g)$, of the form $f(x,\xi)\de_{\grad\Phi(x,\xi)}(dyd\eta)$, where $\Phi$ is a convex function on $\bR^d\times\bR^d$. Let $Q\in\cC(g,R_1)$ and set 
$$
P(x,\xi;dyd\eta):=f(x,\xi)\de_{\grad\Phi(x,\xi)}(dyd\eta)Q_g(y,\eta)\,,
$$
where $Q_g$ is the disintegration of $Q$ with respect to $f$ obtained in Lemma \ref{L-Disint}. Then $P$ is a nonnegative, self-adjoint operator-valued measure satisfying
$$
\Tr_\fH(P(x,\xi;dyd\eta))=f(x,\xi)\de_{\grad\Phi(x,\xi)}(dyd\eta),
$$
while
$$
\int Pdxd\xi=(\grad\Phi\#f)(y,\eta)dyd\eta Q_g(y,\eta)=g(y,\eta)Q_g(y,\eta)dyd\eta=Q(y,\eta)dyd\eta\,.
$$
In particular
\be\lb{PCfR1}
\int P(x,\xi;dyd\eta)=f(x,\xi)Q_g(\grad\Phi(x,\xi))\in\cC(f,R_1)\,.
\ee
Therefore
$$
\ba
E_\hb(f,R_1)^2\le\int\Tr_\fH(Q_g(\grad\Phi(x,\xi))^{1/2}c_\hb(x,\xi)Q_g(\grad\Phi(x,\xi))^{1/2})f(x,\xi)dxd\xi\,.
\ea
$$
By the first inequality in Lemma \ref{L-IneqCost}, one has
$$
c(x,\xi;z,\hb D_z)\le(1+\a)|(x,\xi)-\grad\phi(x,\xi)|^2+(1+\tfrac1\a)c(\grad\Phi(x,\xi);z,\hb D_z)
$$
for a.e. $(x,\xi)\in\bR^d\times\bR^d$ and all $\a>0$. Since $g\in\cP_2^{ac}(\bR^d\times\bR^d)$ and $R_1\in\cD_2(\fH)$ and $Q\in\cC(g,R_1)$, then
$$
\ba
\int\Tr_\fH(Q(y,\eta)^{1/2}c(y,\eta)Q(y,\eta)^{1/2})dyd\eta&
\\
=\int\Tr_\fH(Q_g(\grad\Phi(x,\xi))^{1/2}c(\grad\Phi(x,\xi))Q_g(\grad\Phi(x,\xi))^{1/2})f(x,\xi)dxd\xi&<\infty\,.
\ea
$$
For each $\eps>0$, set
$$
c^\eps(x,\xi;z,\hb D_z)=(I+\eps c(x,\xi;z,\hb D_z))^{-1}c(x,\xi;z,\hb D_z)\le c(x,\xi;z,\hb D_z)\,.
$$
Then, for a.e. $(x,\xi)\in\bR^d\times\bR^d$ and each $\eps>0$, one has
$$
Q_g(\grad\Phi(x,\xi))^{1/2}c(\grad\Phi(x,\xi);z,\hb D_z)Q_g(\grad\Phi(x,\xi))^{1/2}\in\cL^1(\fH)\,,
$$
and
$$
\ba
Q_g(\grad\Phi(x,\xi))^{1/2}&c^\eps_\hb(x,\xi;z,\hb D_z)Q_g(\grad\Phi(x,\xi))^{1/2}
\\
\le&(1+\a)|(x,\xi)-\grad\phi(x,\xi)|^2Q_g(\grad\Phi(x,\xi))
\\
&+(1+\tfrac1\a)Q_g(\grad\Phi(x,\xi))^{1/2}c(\grad\Phi(x,\xi);z,\hb D_z)Q_g(\grad\Phi(x,\xi))^{1/2}\,.
\ea
$$
Integrating both sides of this inequality with respect to the probability distribution $f(x,\xi)$, one finds
$$
\ba
\int\Tr_\fH(Q_g(\grad\Phi(x,\xi))^{1/2}c^\eps_\hb(x,\xi)Q_g(\grad\Phi(x,\xi))^{1/2})f(x,\xi)dxd\xi
\\
\le(1+\a)\int|(x,\xi)-\grad\phi(x,\xi)|^2f(x,\xi)dxd\xi
\\
+(1+\tfrac1\a)\int\Tr_\fH(Q_g(\grad\Phi(x,\xi))^{1/2}c(\grad\Phi(x,\xi))Q_g(\grad\Phi(x,\xi))^{1/2})f(x,\xi)dxd\xi
\\
\le(1+\a)\MKd(f,g)^2
\\
+(1+\tfrac1\a)\int\Tr_\fH(Q_g(y,\eta)^{1/2}c(y,\eta)Q_g(y,\eta)^{1/2})g(y,\eta)dyd\eta
\\
\le(1+\a)\MKd(f,g)^2+(1+\tfrac1\a)\int\Tr_\fH(Q(y,\eta)^{1/2}c(y,\eta)Q(y,\eta)^{1/2})dyd\eta&\,.
\ea
$$
Minimizing the last right hand side of this inequality in $Q\in\cC(g,R_1)$ shows that
$$
\ba
\int\Tr_\fH(Q_g(\grad\Phi(x,\xi))^{1/2}c^\eps_\hb(x,\xi)Q_g(\grad\Phi(x,\xi))^{1/2})f(x,\xi)dxd\xi
\\
\le(1+\a)\MKd(f,g)^2+(1+\tfrac1\a)E(g,R_1)^2&\,.
\ea
$$
Passing to the limit as $\eps\to 0^+$ in the left hand side and applying Proposition \ref{C-Energ} shows that
$$
\ba
E_\hb(f,R_1)^2\le&\int\Tr_\fH(Q_g(\grad\Phi(x,\xi))^{1/2}c(x,\xi)Q_g(\grad\Phi(x,\xi))^{1/2})f(x,\xi)dxd\xi
\\
\le&(1+\a)\MKd(f,g)^2+(1+\tfrac1\a)E(g,R_1)^2\,,
\ea
$$
the first inequality being a consequence of the definition of $E_\hb$ according to \eqref{PCfR1}.

Finally, minimizing the right hand side of this inequality as $\a>0$, i.e. choosing $\a=E_\hb(f,g)/\MKd(f,g)$ if $f\not=g$ a.e. on $\bR^d\times\bR^d$, or letting $\a\to+\infty$ if $f=g$, we arrive at the inequality
$$
\ba
E_\hb(f,R_1)^2\le&\MKd(f,g)^2+E_\hb(g,R_1)^2+2E_\hb(g,R_1)\MKd(f,g)
\\
=&(\MKd(f,g)+E_\hb(g,R_1))^2\,,
\ea
$$
which is precisely the desired inequality. Theorem \ref{armatriang} is proven.
\end{proof}
\begin{proof}[Proof of Proposition \ref{C-Energ}]

Since $E$ is a resolution of the identity on $[0,+\infty)$, and since $\Phi_n$ is continuous, bounded and with values in $[0,+\infty)$, the operators $\Phi_n(A)$ satisfy
$$
0\le\Phi_n(A)=\Phi_n(A)^*\le\left(\sup_{z\ge 0}\Phi_n(z)\right)I_\cH
$$
and
$$
0\le\Phi_1(A)\le\Phi_2(A)\le\ldots\le\Phi_n(A)\le\ldots
$$
Set $R_n:=T^{1/2}\Phi_n(A)T^{1/2}$; by definition $0\le R_n=R_n^*\in\cL^1(\fH)$ and one has
$$
0\le R_1\le R_2\le\ldots\le R_n\le\ldots
$$
together with
$$
\Tr_\cH(R_n)=\sum_{j\ge 1}\tau_j\int_0^\infty\Phi_n(\l)\la e_j|E(d\l)|e_j\ra\le\sum_{j\ge 1}\tau_j\int_0^\infty\l\la e_j|E(d\l)|e_j\ra<\infty
$$
by \eqref{TrTA}. 

Therefore, since $0\leq R_n\in\mathcal L^1(\cH)$,
$$
\sup_{n\ge 1}\la x|R_n|x\ra
\le\sup_{n\ge 1}\||x\ra\la x|\|_{\cL(\cH)}\Tr_{\cH}(R_n)
=\|x\|_\cH^2\sup_{n\ge 1}\Tr_{\cH}(R_n)
<\infty\text{ for all }x\in\cH\,.
$$
Since the sequence $\la x|R_n|x\ra\in[0,+\infty)$ is nondecreasing for each $x\in\cH$, 
$$
\la x|R_n|x\ra\to\sup_{n\ge 1}\la x|R_n|x\ra=:q(x)\in[0,+\infty)\quad\text{ for all }x\in\cH
$$
as $n\to\infty$. Hence
$$
\la x|R_n|y\ra=\la y|R_n|x\ra\to\tfrac14(q(x+y)-q(x-y)+iq(x-iy)-iq(x+iy))=:b(x,y)\in\bC
$$
as $n\to+\infty$. By construction, $b$ is a nonnegative sesquilinear form on $\cH$. 

Consider, for each $k\ge 0$,
$$
F_k:=\{x\in\cH\text{ s.t. }\la x|R_n|x\ra\le k\text{ for each }n\ge 1\}\,.
$$
The set $F_k$ is closed for each $k\ge 0$, being the intersection of the closed sets defined by the inequality $\la x|R_n|x\ra\le k$ as $n\ge 1$. Since the sequence $\la x|R_n|x\ra$ is bounded for each $x\in\cH$, 
$$
\bigcup_{k\ge 0}F_k=\cH\,.
$$
Applying Baire's theorem shows that there exists $N\ge 0$ such that $\mathring{F}_N\not=\varnothing$. In other words, there exists $r>0$ and $x_0\in\cH$ such that
$$
|x-x_0|\le r\implies|\la x|R_n|x\ra|\le N\text{ for all }n\ge 1\,.
$$
By linearity and positivity of $R_n$, this implies
$$
|\la z|R_n|z\ra|\le \tfrac2r(M+N)\|z\|^2\text{ for all }n\ge 1\,,\quad\text{ with }M:=\sup_{n\ge 1}\la x_0|R_n|x_0\ra\,.
$$
In particular
$$
\sup_{|z|\le 1}q(z)\le\tfrac2r(M+N)\,,\quad\text{ so that }|b(x,y)|\le\frac2r(M+N)|\|x\|_\cH\|y\|_\cH
$$
for each $x,y\in\cH$ by the Cauchy-Schwarz inequality. By the Riesz representation theorem, there exists $R\in\cL(\cH)$ such that
$$
R=R^*\ge 0\,,\quad\text{ and }\quad b(x,y)=\la x|R|y\ra
=\lim_{n\to\infty}\la x|R_n|y\ra\,,
$$
so that  $R_n\to R\in\cL(\cH)$ weakly as $n\to\infty$.

Observe now that $R\ge R_n$ for each $n\ge 1$, so that 
\be\label{ineqinv}
\sup_{n\ge 1}\Tr_\cH(R_n)\le\Tr_\cH(R)\,.
\ee
In particular
$$
\sup_{n\ge 1}\Tr_\cH(R_n)=+\infty\implies\Tr_\cH(R)=+\infty\,.
$$
Since the sequence $\Tr_\cH(R_n)$ is nondecreasing,
$$
\Tr_\cH(R_n)\to\sup_{n\ge 1}\Tr_\cH(R_n)\quad\text{ as }n\to\infty\,.
$$
By the noncommutative variant of Fatou's lemma (Theorem 2.7 (d) in \cite{Simon}), 
$$
\sup_{n\ge 1}\Tr_\cH(R_n)<\infty\implies R\in\cL^1(\cH)\text{ and }\Tr_\cH(R)\le\sup_{n\ge 1}\Tr_\cH(R_n)\,,
$$
so that, by \eqref{ineqinv},
$$
\Tr_\cH(R)=\sup_{n\ge 1}\Tr_\cH(R_n).
$$

Finally
$$
T^{1/2}AT^{1/2}-R_n=\sum_{j,k\ge 1}\tau_j^{1/2}\tau_k^{1/2}\left(\int_0^\infty(\l-\Phi_n(\l))\la e_j|E(d\l)|e_k\ra\right)|e_j\ra\la e_k|
$$
so that
$$
\ba
\la x|T^{1/2}AT^{1/2}-R_n|x\ra=&\int_0^\infty(\l-\Phi_n(\l))\La\sum_{j\ge 1}\tau_j^{1/2}\la e_j|x\ra e_j|E(d\l)|\sum_{k\ge 1}\tau_k^{1/2}\la e_k|x\ra e_k\Ra
\\
=&\int_0^\infty(\l-\Phi_n(\l))\la T^{1/2}x|E(d\l)|T^{1/2}x\ra\ge 0\,.
\ea
$$
Hence
$$
0\le T^{1/2}AT^{1/2}-R_n=(T^{1/2}AT^{1/2}-R_n)^*\in\cL^1(\cH)
$$
so that
$$
\ba
\|T^{1/2}AT^{1/2}-R_n\|_1=&\Tr_\cH(T^{1/2}AT^{1/2}-R_n)
\\
=&\sum_{j\ge 1}\tau_j\int_0^\infty(\l-\Phi_n(\l))\la e_j|E(d\l)|e_j\ra\to 0
\ea
$$
as $n\to\infty$ by monotone convergence. Hence $R_n\to T^{1/2}AT^{1/2}$ in $\cL^1(\cH)$ and one has in particular
$$
\Tr_{\cH}(T\Phi_n(A))=\Tr_{\cH}(T^{1/2}\Phi_n(A)T^{1/2})\to\Tr_{\cH}(T^{1/2}AT^{1/2})\,.
$$
\end{proof}

\section{Quantum Wasserstein and weak topologies}\label{wassweak}

{\bcr  It is well known that Wasserstein metrics dominate weak topologies. It is natural to wonder whether the $E_\hbar$ does the same. The answer will be positive when considering, for two density  matrices  $S$ and $R$, the quantity $E_\hbar(\widetilde W_\hbar[S],R)$.  Since, by Theorem \ref{armaprop},3),
 $$
 E_\hbar(\widetilde W_\hbar[S],R)\geq \tfrac1{\sqrt2} \MKd(\widetilde W_\hbar[S], \widetilde W_\hbar[R]),
 $$ 
 the following results will give the answer.
 }
 \vskip 0.5cm
 {\bf Notation}: unless the contrary is  specified, we will define the norm of a vector   as the {\bf maximum} value  of the  norm over all the components.
 \subsection{$L^2$ test functions}\label{ltwo}
 We will denote by $\cD_{HS}(\fH)$ the set of Hilbert-Schmidt operators on $\fH$.
 \begin{Def}\label{defddel}
 Let $R,S\in\cD_{HS}(\fH)$, with Wigner transforms $W_{\hbar}[R],W_{\hbar}[S]$. We define
 
 \be\label{defd}
 d(R,S)=
 \sup_{\substack{
\|F\|_{1}, \ 
 \|\frac1{i\hbar}[x,F]\|_{1},\  \|\frac1{i\hbar}[-i\hbar\nabla,F]\|_{1}\leq 1\\
}}
|\Tr{(F(R-S))}|
 \ee
 and
 \be\label{defdel}
 \delta
 (
 W_{\hbar}[R],W_{\hbar}[S]
 )=
 \sup_{\substack{ 
 Lip(f)\leq 1\\
\| f\|_{L^2(\bR^{2d})},\ \|\nabla f\|_{L^2(\bR^{2d})}\leq 1
}}
|\int (W_\hbar[R]-W_\hbar[S])f(x,\xi)dxd\xi|
 \ee
 \end{Def}
 \noindent 
 Note that, for $R,S\in\cD_{HS}(\fH)$, we have  
 $d(R,S)\leq \|R-S\|\leq\|R-S\|_{HS}<\infty$ and $\delta(W_{\hbar}[R],W_{\hbar}[S))\leq(2\pi\hbar)^{-d/2}\|R-~S\|_{HS}<\infty$.
 \begin{Prop}\label{distineq}
 Let $\hbar\leq \frac2\pi$. 
 
 Then the functions $d(\cdot,\cdot),\delta(W_{\hbar}[\cdot],W_{\hbar}[\cdot]):\,\cD_{HS}(\fH)\times\cD_{HS}(\fH)\to[0,+\infty[$ are distances.
 
 Moreover,

 $$
 d(\cdot,\cdot)\leq 2^d\delta(W_{\hbar}[\cdot],W_{\hbar}[\cdot]).
 $$\end{Prop}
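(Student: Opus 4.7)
The plan is to split the argument into (i) the metric axioms for $d$ and $\delta$, and (ii) the inequality $d\le 2^d\delta$. For both functionals, positivity, symmetry and the triangle inequality are inherited directly from their definitions as suprema of absolute values of linear functionals, so only the separation axiom requires work. If $\delta(W_\hbar[R],W_\hbar[S])=0$, scaling an arbitrary test function by $\lambda>0$ shows that $\int(W_\hbar[R]-W_\hbar[S])f\,dxd\xi=0$ for every $f$ with finite Lipschitz, $L^2$, and gradient $L^2$ norms; in particular this holds for every $f\in C_c^\infty(\bR^{2d})$. Since $R,S\in\cD_{HS}(\fH)$ implies $W_\hbar[R]-W_\hbar[S]\in L^2(\bR^{2d})$ via the Plancherel-type identity $\|W_\hbar[G]\|_{L^2}^2=(2\pi\hbar)^{-d}\|G\|_{HS}^2$, density of $C_c^\infty$ in $L^2$ forces $W_\hbar[R]=W_\hbar[S]$, and hence $R=S$ by injectivity of $W_\hbar$ on Hilbert-Schmidt operators. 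The argument for $d$ is parallel: operators with Schwartz-class Weyl symbol satisfy the three trace-norm conditions (after rescaling) and are dense in $\cD_{HS}(\fH)$ for the Hilbert-Schmidt pairing $\langle F,R-S\rangle_{HS}=\Tr(F^*(R-S))$, so vanishing of $\Tr(F(R-S))$ on the admissible class forces $R=S$.

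The quantitative inequality rests on the Wigner-trace duality
\[
\Tr(F(R-S))=(2\pi\hbar)^d\int_{\bR^{2d}} W_\hbar[F](x,\xi)(W_\hbar[R]-W_\hbar[S])(x,\xi)\,dxd\xi\,,
\]
together with the reduction to self-adjoint $F$: replacing $F$ with $\tfrac12(e^{i\theta}F+e^{-i\theta}F^*)$ preserves the three trace-norm bounds and, upon optimizing $\theta$, recovers $|\Tr(F(R-S))|$. One may therefore assume $F=F^*$, so that $W_\hbar[F]$ is real-valued, and set
\[
f:=2^{-d}(2\pi\hbar)^d W_\hbar[F]\,.
\]
The plan is to check that $f$ lies in the unit ball defining $\delta$; given this,
\[
|\Tr(F(R-S))|=2^d\Bigl|\int f(W_\hbar[R]-W_\hbar[S])\,dxd\xi\Bigr|\le 2^d\,\delta(W_\hbar[R],W_\hbar[S])\,,
\]
and a supremum over admissible $F$ completes the proof.

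The verification of the three constraints on $f$ uses three standard Wigner-calculus facts: (a) the displaced-parity representation $W_\hbar[G](x,\xi)=(\pi\hbar)^{-d}\Tr(GD(x,\xi)\Pi D(x,\xi)^*)$, which yields $\|W_\hbar[G]\|_{L^\infty}\le(\pi\hbar)^{-d}\|G\|_1$ for every trace-class $G$; (b) the Plancherel identity $\|W_\hbar[G]\|_{L^2}^2=(2\pi\hbar)^{-d}\|G\|_{HS}^2$ combined with the elementary inequality $\|G\|_{HS}\le\|G\|_1$; and (c) the identities $\partial_{\xi_j}W_\hbar[F]=W_\hbar[\tfrac1{i\hbar}[x_j,F]]$ and $\partial_{x_j}W_\hbar[F]=-W_\hbar[\tfrac1{i\hbar}[-i\hbar\partial_{y_j},F]]$, obtained by direct computation on the integral kernel. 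Feeding the three hypotheses $\|F\|_1,\|\tfrac1{i\hbar}[x,F]\|_1,\|\tfrac1{i\hbar}[-i\hbar\nabla,F]\|_1\le 1$ (read with the max-over-components convention) into (a) and (c) yields $\|\partial_j f\|_{L^\infty}\le 1$ for each coordinate $j$; feeding them into (b) yields $\|f\|_{L^2}$ and each $\|\partial_j f\|_{L^2}$ bounded by $(2\pi\hbar)^{d/2}/2^d$. Under the hypothesis $\hbar\le 2/\pi$ one has $(2\pi\hbar)^{d/2}\le 2^d$, so all $L^2$ bounds collapse to $\le 1$; the pointwise bound $\|\partial_j f\|_{L^\infty}\le 1$ together with the max-over-components convention for vector norms gives $\Lip(f)\le 1$ and $\|\nabla f\|_{L^\infty}\le 1$, which are the remaining requirements for $f$ to be admissible in $\delta$. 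The main delicate point is the careful tracking of norm conventions: the factor $2^d$ in the statement is precisely the price of converting the pointwise Wigner bound $(\pi\hbar)^{-d}\|G\|_1$ into the $L^2$ test-function normalization required by $\delta$, and the threshold $\hbar\le 2/\pi$ is exactly what makes this conversion compatible with the Plancherel bound.
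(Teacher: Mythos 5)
Your proof is correct and follows essentially the same route as the paper's: the same Wigner--trace duality, the same $L^\infty$ bound $\|W_\hbar[G]\|_{L^\infty}\le(\pi\hbar)^{-d}\|G\|_1$, the same commutator/derivative identities, the same Plancherel bound, and the same rescaled test function $(\pi\hbar)^d W_\hbar[F]$, with $\hbar\le 2/\pi$ entering exactly as you describe. The only differences are that you supply details the paper delegates elsewhere (the separation axiom, which the paper gets by comparison with $d_1$ and a cited lemma, and the reduction to self-adjoint $F$ so that the test function is real), and these fill-ins are sound.
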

\begin{proof}
We first prove the inequality. Let us recall the following elementary facts:
\be\label{putded}
 \Tr{(F(R-S))}={(2\pi\hbar)^{d}}\int W_\hbar[F](W_\hbar[R]-W_\hbar[S])(x,\xi){dxd\xi}
\ee
and $W_\hbar[\frac1{i\hbar}[x,F]]=\nabla_pW_\hbar[F],\ W_\hbar[\frac1{i\hbar}[-i\hbar\nabla,F]]=-\nabla_xW_\hbar[F]$.

\noindent By the part $(b)$ of the proof of Proposition B.5 in \cite{splitting} we know that, for $\|D\|_1<\infty$,
$$
\|W_\hbar[D]\|_\infty\leq\frac{\|D\|_1}{(\pi\hbar)^d},
$$
so that, for any $F\in\mathcal D(\mathfrak H)$, 
$$
\|F\|_{1}, \ 
 \|\tfrac1{i\hbar}[x,F]\|_{1},\  \|\tfrac1{i\hbar}[-i\hbar\nabla,F]\|_{1}\leq 1
 \Longrightarrow
 \|{(\pi\hbar)^d}{W_\hbar[F]}\|_{L^\infty}\mbox{ and }Lip({(\pi\hbar)^d}{W_\hbar[F]})\leq 1.
 $$
 
Finally, a straightforward computation shows that, for $\|D\|_1<\infty$,
\be\label{ldeuxdeuxun}
 (2\pi\hbar)^{d}\|W_\hbar[D]\|^2_{L^2}\leq\|D\|^2_{2},
 \leq \|D\|_{1}^2,
 \ee
 so that, for $\tfrac{\pi\hbar}2\leq 1$,
 $$
 \|D\|_{1}\leq 1\Longrightarrow\|D\|_{2}\leq(\tfrac{\pi\hbar}2)^{-\frac d2}
 \Longrightarrow 
 \|(\pi\hbar)^dW_\hbar[D]\|_{L^2(\bR^{2d})}\leq 1.
 $$
 Therefore, 
 $$
\|F\|_{1},\ 
\|\tfrac1{i\hbar}[x,F]\|_{1},\ \|\tfrac1{i\hbar}[-i\hbar\nabla,F]\|_{1}\leq 1
\Longrightarrow
\|
(\pi\hbar)^dW_\hbar[F]\|_{L^2(\bR^{2d})};\ 
\|\nabla 
(\pi\hbar)^dW_\hbar[F]\|_{L^2(\bR^{2d})}\leq 1,
 $$
 and, by \eqref{putded} and the change of test function $f\to (\pi\hbar)^dW_\hbar[F]$, we get $d(\cdot,\cdot)\leq 2^d\delta(W_{\hbar}[\cdot],W_{\hbar}[\cdot])$.
 
 The symmetry in the argument and the triangle inequality for $d$ and $\delta$ are obvious by construction. Moreover $2^d\delta(W_{\hbar}[\cdot],W_{\hbar}[\cdot])\geq d(\cdot,\cdot)= d_1(\cdot,\cdot)$ where $d_1$ is the distance defined  in \eqref{defdinfty} below. Therefore $d(\cdot,\cdot)$ and $\delta(W_{\hbar}[\cdot],W_{\hbar}[\cdot])$ separate points.
\end{proof} 
 \begin{Prop}\lb{quantwassweak}
Let $R,S\in\cD^2(\fH)$, with Wigner transforms $W_{\hbar}[R],W_{\hbar}[S]$ and Husimi transforms $\widetilde W_{\hbar}[R],\widetilde W_{\hbar}[S]$. 
Then
\begin{eqnarray}
\delta(W_{\hbar}[R],W_{\hbar}[S])&\le&
\MKd(\widetilde W_{\hbar}[R],\widetilde W_{\hbar}[S])+\sqrt\hbar
\|W_\hbar[R]-W_\hbar[S]\|_{L^2(\bR^{2d})}\nonumber\\
&=&
\MKd(\widetilde W_{\hbar}[R],\widetilde W_{\hbar}[S])+\sqrt\hbar\frac{\|R-S\|_2}{(2\pi\hbar)^{d/2}}
\end{eqnarray}
\end{Prop}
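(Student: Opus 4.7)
The plan is to fix an arbitrary test function $f$ with $\Lip(f),\,\|f\|_{L^2(\bR^{2d})},\,\|\nabla f\|_{L^2(\bR^{2d})}\le 1$ and exploit the identity $\widetilde W_\hbar[R]=e^{\hbar\Delta_{x,\xi}/4}W_\hbar[R]$ together with the self-adjointness of $e^{\hbar\Delta_{x,\xi}/4}$ on $L^2(\bR^{2d})$ (its kernel is the even Gaussian $(\pi\hbar)^{-d}e^{-|z|^2/\hbar}$). The key algebraic move is the splitting
\[
\int_{\bR^{2d}} (W_\hbar[R]-W_\hbar[S])\,f\,dxd\xi = \int_{\bR^{2d}} (\widetilde W_\hbar[R]-\widetilde W_\hbar[S])\,f\,dxd\xi + \int_{\bR^{2d}} (W_\hbar[R]-W_\hbar[S])\bigl(f-e^{\hbar\Delta_{x,\xi}/4}f\bigr)\,dxd\xi,
\]
obtained by moving the smoothing off one factor onto the other. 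The first summand reduces to comparing two honest probability densities against $f$, while the second is an $L^2$ pairing in which the Gaussian smoothing will supply the announced $O(\sqrt\hbar)$ factor.

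For the first summand, since $\widetilde W_\hbar[R]$ and $\widetilde W_\hbar[S]$ are probability densities on $\bR^{2d}$, Kantorovich--Rubinstein duality applied to the $1$-Wasserstein distance and the Jensen-type inequality $\MK\le\MKd$ give
\[
\Bigl|\int_{\bR^{2d}} (\widetilde W_\hbar[R]-\widetilde W_\hbar[S])\,f\,dxd\xi\Bigr|\le \Lip(f)\,\MK(\widetilde W_\hbar[R],\widetilde W_\hbar[S])\le \MKd(\widetilde W_\hbar[R],\widetilde W_\hbar[S]).
\]
For the second summand, Cauchy--Schwarz delivers
\[
\Bigl|\int_{\bR^{2d}} (W_\hbar[R]-W_\hbar[S])(f-e^{\hbar\Delta_{x,\xi}/4}f)\,dxd\xi\Bigr|\le \|W_\hbar[R]-W_\hbar[S]\|_{L^2(\bR^{2d})}\,\|(I-e^{\hbar\Delta_{x,\xi}/4})f\|_{L^2(\bR^{2d})}.
\]
The second factor is estimated by Plancherel: the Fourier symbol of $I-e^{\hbar\Delta_{x,\xi}/4}$ on $\bR^{2d}$ is $1-e^{-\hbar|\zeta|^2/4}$, and the elementary inequality $(1-e^{-x})^2\le x$ for $x\ge 0$ (equivalently $1-e^{-x}\le\sqrt x$, verified by comparing the functions at $0$ and checking that $e^x-2\sqrt x>0$ for all $x>0$) gives
\[
\|(I-e^{\hbar\Delta_{x,\xi}/4})f\|_{L^2(\bR^{2d})}^2 = \int_{\bR^{2d}}(1-e^{-\hbar|\zeta|^2/4})^2|\hat f(\zeta)|^2\,d\zeta \le \tfrac{\hbar}{4}\|\nabla f\|_{L^2(\bR^{2d})}^2 \le \hbar.
\]
Summing the two bounds and taking the supremum over admissible $f$ yields the stated inequality. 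The closing equality $\sqrt\hbar\|W_\hbar[R-S]\|_{L^2}=\sqrt\hbar\,\|R-S\|_2/(2\pi\hbar)^{d/2}$ is the standard Wigner isometry, already used in the proof of Proposition \ref{distineq} through formula \eqref{ldeuxdeuxun}.

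There is no genuine obstacle in this proof; the only point of care is to engineer the splitting so that one half becomes a Husimi--Husimi comparison exploiting $\Lip(f)\le 1$, while the other half pairs $W_\hbar[R]-W_\hbar[S]$ against the small quantity $f-e^{\hbar\Delta_{x,\xi}/4}f$, whose smallness is quantified through $\|\nabla f\|_{L^2}\le 1$. Everything else collapses to Plancherel plus the scalar inequality $(1-e^{-x})^2\le x$.
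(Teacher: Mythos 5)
Your proof is correct and follows essentially the same route as the paper: the same splitting that transfers the Gaussian smoothing $e^{\hbar\Delta_{x,\xi}/4}$ from the Wigner functions onto the test function, Kantorovich duality plus $\MK\le\MKd$ for the Husimi term, and Cauchy--Schwarz for the remainder. The only (immaterial) difference is that you bound $\|(I-e^{\hbar\Delta_{x,\xi}/4})f\|_{L^2}$ via Plancherel and the scalar inequality $(1-e^{-x})^2\le x$, whereas the paper uses the operator identity $(e^{\hbar\Delta/4}-1)^2=(e^{\hbar\Delta/2}-1)-2(e^{\hbar\Delta/4}-1)$ together with $1-e^{\hbar\Delta/\l}\le-\hbar\Delta/\l$; both yield the required $\sqrt\hbar\,\|\nabla f\|_{L^2}$ bound (yours with a slightly better constant).
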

\begin{proof}
Using successively formulas (7.1) and formula (7.3) in chapter 7 of \cite{VillaniAMS}, we have
$$
\left|\int f(x,\xi)(\widetilde W_\hb[R]-\widetilde W_\hb[S])(x,\xi)dxd\xi\right|\le\Lip(f)\MKd(\widetilde W_\hb[R],\widetilde W_\hb[S])
$$
Since $\widetilde W_\hbar=e^{\hbar\Delta/4}W_\hbar$ one find
\begin{eqnarray}
\left|\int f(x,\xi)( W_\hb[R]- W_\hb[S])(x,\xi)dxd\xi\right|&-
&\Lip(f)\MKd(\widetilde W_\hb[R],\widetilde W_\hb[S])\nonumber\\
&\leq&\left|\int (e^{\hbar\Delta/4}-1)f(x,\xi)( W_\hb[R]-  W_\hb[S])(x,\xi)dxd\xi\right|\nonumber\\
&\leq&
\|(e^{\hbar\Delta/4}-1)f\|_{L^2(\bR^{2d})}\|  W_\hb[R]-  W_\hb[S]\|_{L^2(\bR^{2d})}\nonumber
\end{eqnarray}
 by Cauchy-Schwarz. 
 
 \noindent By  $(e^{\hbar\Delta/4}-1)^2=
(e^{\hbar\Delta/2}-1)-2(e^{\hbar\Delta/4}-1)$ 
 and $ 1-e^{\hbar\Delta/\lambda}\leq -\hbar\Delta/{\lambda},\lambda>0$, one gets
 \be\label{normg}
 \|(e^{\hbar\Delta/4}-1)f\|_{L^2(\bR^{2d})}\leq \sqrt\hbar \|\nabla f\|_{L^2(\bR^{2d})}
 \ee
 which gives the desired inequality.
\end{proof}
Let $R(t)$ solves the von Neumann equation \eqref{vne} with initial condition $R^{in}$ and  let $\Phi^t$ be the underlying Hamiltonian flow of Hamiltonian $\tfrac12p^2+V(q)$  as defined by \eqref{phit}. Let moreover $\mathcal R(t)$ be defined by
\be\label{defcalr}
W_\hbar[\mathcal R(t)]=
W_\hbar[ R^{in}]\circ \Phi^{-t}
\ee
Note that, since $R^{in}$ is Hilbert-Schmidt and $\Phi^{-t}$ is symplectic, $W_\hbar[\mathcal R(t)]$ is square integrable on $\bR^{2d}$ so that $\mathcal R(t)$ is well defined as a Hilbert-Schmidt operator.

\noindent Moreover, at the contrary of $\widetilde W_\hbar[\cR(t)]$ which is, as $\cR(t)$, not positive, $\widetilde W_\hbar[R^{in}]\circ\Phi^{-t}$ is a probability measure since $R^{in}$ is positive (and $\Phi^{-t}$  symplectic).
\begin{Thm}\label{ineqphi}
Let $V\in C^{1,1}$. 
 Then
\begin{eqnarray}
&&\delta(W_\hbar[R(t)],W_\hbar[\mathcal R(t)])\nonumber\\&\leq& \MKd(\widetilde W_\hbar[R(t)],\widetilde W_\hbar[R^{in}]\circ\Phi^{-t})
+
\sqrt\hbar(1+e^{1+\Lip{(\nabla V)})|t|)})\|W_\hbar[R^{in}]\|_{L^2}\nonumber\\
&=& \MKd(\widetilde W_\hbar[R(t)],\widetilde W_\hbar[R^{in}]\circ\Phi^{-t})
+
\sqrt\hbar(1+e^{(1+\Lip{(\nabla V)}|t|)})\|R^{in}\|_{2}/(2\pi\hbar)^{d/2}\nonumber
\end{eqnarray}
\end{Thm}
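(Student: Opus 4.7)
\textbf{Proof plan for Theorem \ref{ineqphi}.} The plan is to decompose the quantity $\int (W_\hbar[R(t)]-W_\hbar[\mathcal R(t)])f\,dxd\xi$ (which must be controlled for every admissible test function $f$ in the definition of $\delta$) into three pieces by inserting the Husimi transforms:
\begin{align*}
\int (W_\hbar[R(t)]-W_\hbar[\mathcal R(t)])f
&=\int (\widetilde W_\hbar[R(t)]-\widetilde W_\hbar[R^{in}]\!\circ\!\Phi^{-t})f\\
&\quad+\int (W_\hbar[R(t)]-\widetilde W_\hbar[R(t)])f+\int (\widetilde W_\hbar[R^{in}]\!\circ\!\Phi^{-t}-W_\hbar[\mathcal R(t)])f,
\end{align*}
and to bound each piece separately. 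Both $\widetilde W_\hbar[R(t)]$ and $\widetilde W_\hbar[R^{in}]\!\circ\!\Phi^{-t}$ are probability densities (the latter because $R^{in}\geq 0$ and $\Phi^{-t}$ is a symplectomorphism), so the first piece is controlled by the Kantorovich--Rubinstein duality: it is bounded by $\Lip(f)\,\MKd(\widetilde W_\hbar[R(t)],\widetilde W_\hbar[R^{in}]\!\circ\!\Phi^{-t})$, which is at most the $\MKd$ term in the theorem since $\Lip(f)\leq 1$.

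For the second piece, I use $\widetilde W_\hbar=e^{\hbar\Delta/4}W_\hbar$ and the self-adjointness of $e^{\hbar\Delta/4}$ to rewrite
$$\int(\widetilde W_\hbar[R(t)]-W_\hbar[R(t)])f=\int W_\hbar[R(t)]\,(e^{\hbar\Delta/4}-1)f.$$
Cauchy--Schwarz, together with the estimate $\|(e^{\hbar\Delta/4}-1)f\|_{L^2}\leq\sqrt\hbar\,\|\nabla f\|_{L^2}$ from \eqref{normg}, and the fact that the von Neumann flow preserves the Hilbert--Schmidt norm (so $\|W_\hbar[R(t)]\|_{L^2}=\|W_\hbar[R^{in}]\|_{L^2}$), yields the bound $\sqrt\hbar\,\|W_\hbar[R^{in}]\|_{L^2}$, which is the ``$1$'' in the factor $(1+e^{(1+\Lip(\nabla V))|t|})$.

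For the third piece, using $W_\hbar[\mathcal R(t)]=W_\hbar[R^{in}]\!\circ\!\Phi^{-t}$ and that $\Phi^{-t}$ preserves Lebesgue measure (symplectic), a change of variables gives
$$\int(\widetilde W_\hbar[R^{in}]\!\circ\!\Phi^{-t}-W_\hbar[R^{in}]\!\circ\!\Phi^{-t})f=\int W_\hbar[R^{in}]\,(e^{\hbar\Delta/4}-1)(f\!\circ\!\Phi^t),$$
again using self-adjointness of $e^{\hbar\Delta/4}$. Cauchy--Schwarz and \eqref{normg} now give the bound $\sqrt\hbar\,\|W_\hbar[R^{in}]\|_{L^2}\|\nabla(f\!\circ\!\Phi^t)\|_{L^2}$, and since $\Phi^t$ is measure preserving the chain rule yields $\|\nabla(f\!\circ\!\Phi^t)\|_{L^2}\leq \|D\Phi^t\|_\infty\,\|\nabla f\|_{L^2}\leq \|D\Phi^t\|_\infty$.

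The main (and only nontrivial) step is the estimate of $\|D\Phi^t\|_\infty$. Differentiating the Hamilton equations $\dot q=p$, $\dot p=-\nabla V(q)$ with respect to the initial condition, one sees that $D\Phi^t$ satisfies the linear ODE $\frac{d}{dt}D\Phi^t=M(q(t))D\Phi^t$ with
$$M(q)=\begin{pmatrix} 0 & I\\ -D^2V(q) & 0\end{pmatrix},\qquad \|M(q)\|\leq 1+\Lip(\nabla V),$$
the latter because $V\in C^{1,1}$ so $\|D^2V\|_\infty\leq\Lip(\nabla V)$. Gronwall's inequality then gives $\|D\Phi^t\|_\infty\leq e^{(1+\Lip(\nabla V))|t|}$, so the third piece is bounded by $\sqrt\hbar\,e^{(1+\Lip(\nabla V))|t|}\|W_\hbar[R^{in}]\|_{L^2}$. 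Summing the three bounds and taking the supremum over admissible $f$ yields the stated inequality; the final equality in the theorem follows from $\|W_\hbar[R^{in}]\|_{L^2}=\|R^{in}\|_2/(2\pi\hbar)^{d/2}$ recalled in \eqref{ldeuxdeuxun}.
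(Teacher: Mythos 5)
Your proof is correct and follows essentially the same route as the paper: the same three-term decomposition (the $\MKd$ term plus the two Wigner-vs-Husimi corrections $(e^{\hbar\Delta/4}-1)f$ and $(e^{\hbar\Delta/4}-1)(f\circ\Phi^t)$), Cauchy--Schwarz with \eqref{normg}, and a Gronwall bound $\|\mathrm{d}\Phi^t\|_{L^\infty}\le e^{(1+\Lip(\nabla V))|t|}$ exactly as in Lemma \ref{clldeux}. The only cosmetic difference is that you obtain the flow estimate from the variational equation (which for $V\in C^{1,1}$ holds only a.e.) whereas the paper applies Gronwall directly to differences of trajectories; both give the same constant.
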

\begin{proof}
One starts again with
$$
\left|\iint f(x,\xi)(\widetilde W_\hb[R(t)]-\widetilde W_\hb[R^{in}]\circ\Phi^{-t})(x,\xi)dxd\xi\right|\le\Lip(f)\MKd(\widetilde W_\hb[R],\widetilde W_\hb[S]\circ\Phi^{-t}))
$$
Since $\Phi^t$ preserves the measure on phase space we get
\begin{eqnarray}
\int f(x,\xi)\widetilde W_\hb[R^{in}]\circ\Phi^{-t}(x,\xi)dxd\xi
&=&
\int f\circ\Phi^{t}(x,\xi)\widetilde W_\hb[R^{in}](x,\xi)dxd\xi\nonumber\\
&=&
\int 
\big(
e^{\hbar\Delta/4}
(f\circ\Phi^{t})
\big)
(x,\xi) W_\hb[R^{in}](x,\xi)dxd\xi\nonumber\\
&=&
\int f(x,\xi)\big(W_\hbar[R^{in}]\circ\Phi^{-t}\big)(x,\xi)dxd\xi\nonumber\\
&+&
\int g^t(x,\xi)W_\hbar[R^{in}](x,\xi)dxd\xi\nonumber
\end{eqnarray}
with
\begin{eqnarray}
g^t(x,\xi)&=&
\big(
e^{\hbar\Delta/4}-1\big)
(f\circ\Phi^{t})
(x,\xi)\nonumber
\end{eqnarray}
Therefore
\begin{eqnarray}
\left|\iint f( W_\hb[R(t)]- W_\hb[R^{in}]\circ\Phi^{-t})dxd\xi\right|
&\le&\Lip(f)\MKd(\widetilde W_\hb[R(t)],\widetilde W_\hb[S]\circ\Phi^{-t})\nonumber\\
&+&\int (g^{t=0}W_\hbar[R(t)]
-g^tW_\hbar[R^{in}])(x,\xi)dxd\xi\nonumber\\
&\leq&
\|g^{t=0}\|_{L^2}\|W_\hbar[R(t)]\|_{L^2}+\|g^t\|_{L^2}\|W_\hbar[R^{in}]\|_{L^2}\nonumber\\
&=&(\|g^{t=0}\|_{L^2}+\|g^t\|_{L^2})\|W_\hbar[R^{in}]\|_{L^2}\label{firstsecond}
\end{eqnarray}
Since $g^{t=0}=(e^{\hbar\Delta/4}-1)f$, the first term in the parenthesis in \eqref{firstsecond} can be estimated by \eqref{normg}. The second one will be treated by  the following result.

\begin{Lem}\label{clldeux}
Let $\Phi^t$ the Hamiltonian flow associated to $\tfrac12 p^2+V(q)$ with $V\in C^{1,1}$. Then, for all $t\in\bR$ and all $f\in W^{1,2}$, and all $1\leq p<\infty$,
$$
\|\mbox{d}\Phi^t\|_{L^\infty}\leq e^{(1+\Lip{(\nabla V)}|t|)}
\ \ \mbox{ and }\ \ 
\|\nabla(f\circ\Phi^t)\|_{L^
p
}\leq e^{(1+\Lip{(\nabla V)}|t|)} \|\nabla f\|_{L^
p
}.
$$
\end{Lem}
\begin{proof}
By the Lipschitz condition on $V$ we have, for all $z,z'\in\bR^{2d}$,
\begin{eqnarray}
|\Phi^t(z)-\Phi^s(z')|
&=&
||z-z'|+\int_0^t\partial_s(\Phi^t(z)-\Phi^s(z'))ds\nonumber\\
&\leq&
||z-z'|+(1+\Lip{(\nabla V)})\int_0^t|\Phi^s(z)-\Phi^s(z')|ds,\nonumber
\end{eqnarray}
so that, by the Gronwall Lemma,
$\Phi^t$ is Lipschitz for all $t$ with Lipschtz constant smaller than $e^{(1+\Lip{(\nabla V)})|t|)}$.
Therefore, $\mbox{d}\Phi^t$ exists a.e., 
$\|\mbox{d}\Phi^t
\|_{L^\infty}\leq e^{(1+\Lip{(\nabla V)}|t|)}$. Moreover  $|\nabla(f\circ\Phi^t)|\leq e^{(1+\Lip{(\nabla V)}|t|)}|(\nabla f)\circ\Phi^t|$ a.e. and
we can perform the change of variable $z\to \Phi^t(z)$ in $\|(\nabla f)\circ\Phi^t|\|_{L^2}$. Since, being a symplectomorphism, $\Phi^t$ preserves the Lebesgue measure on phase space, we get the second inequality in Lemma \ref{clldeux}.
\end{proof}
The end of the proof of the first inequality in Theorem \ref{ineqphi} is achieved by \eqref{normg} and Lemma \ref{clldeux}. The second one follows from the first one by \eqref{ldeuxdeuxun}.
\end{proof}

 \subsection{$L^\infty$ test functions}\label{linfty}\ 
 
 
\begin{Def}\label{defddelM}
 Let $R,S\in\cD(\fH)\cup\{\cR\in\cD_{HS}(\fH),\ W_\hbar[\cR]\in L^1(\bR^{2d})\}$, with Wigner transforms $W_{\hbar}[R],W_{\hbar}[S]$. We define, for any integer $M>0$,
 \be\label{defdeltainfty}
\delta_M(W_{\hbar}[R],W_{\hbar}[S])
:=\sup_{\substack{
\max\limits_{|\a|,|\b|\le M}\|\d^\a_x\d^\b_\xi f\|_{L^\infty}\leq 1}}|\int (W_{\hbar}[R]
-W_{\hbar}[S])
(x,\xi)
f(x,\xi)dxd\xi|\,.
\ee
and 
\be\label{defdinfty}
d_M(R,S):=\sup_{\substack{
\max\limits_{\substack{|\a|,|\b| \le M}}\|\mathcal D^\a_{-i\hbar\nabla}\mathcal D^\b_{x}F\|_{1}\leq 1}}|\Tr{(F(R-S))}|\,,
\ee
where $\cD_A=\tfrac1{i\hbar}[A,\cdot]$ for each (possibly unbounded) self-adjoint operator $A$ on $\fH$.
\end{Def}
Note that when $R\in\cD(\fH)$, $|\Tr{FR}|\leq\|F\|_\infty\|R\|_1\leq 1$ and when $R\in\cD_{HS}(\fH)$, $|\Tr {FR}|\leq (F,R)_{HS}\leq \|R\|_{HS}\|F\|_{HS}\leq\|R\|_{HS}\|F\|_{1}\leq\|R\|_{HS}<\infty$, so that $d<~\infty$.

Moreover, when $M\geq[d/2]+2$, $\max\limits_{|\a|,|\b|\le M}\|\d^\a_x\d^\b_\xi f\|_{L^\infty}\leq 1$ implies by the Calderon-Vaillancourt theorem that $f$ is the Wigner function of a bounded operator $F$ such that $\|F\|\leq\gamma_d(2\pi\hbar)^d$. Therefore, when $R\in\cD(\fH)$ $\int W_\hbar[R]f(x,\xi)dxd\xi=(2\pi\hbar)^{-d}\Tr{RF}\leq \gamma_d<\infty$, and when $W_\hbar[R]\in L^1(\bR^{2d})$, $\int\int W_\hbar[R]f(x,\xi)dxd\xi\leq\|W_\hbar[R]\|_1<\infty$, so that $\delta_M<\infty,\ M\geq[d/2]+2$.

By the same proof than the one of  Lemma B2 in \cite{splitting} one sees that $d_M$ is a distance, and by the same argument as for $\delta$ in the proof of Proposition \ref{distineq}, together with the first inequality in Proposition \ref{propnest} below, one sees that $\delta_M$, considered as a functions of Wigner functions is also a distance.

In order to use the estimate of $\gamma_d$ proved in the next section, we will restrict ourselves to  the following 
restriction (because $2[d/4]+2<[d/2]+2$) of Proposition B5 in \cite{splitting}.

\begin{Prop}\label{propnest}
Let $R,S\in\cD_{HS}(\fH)$. Then
$$
d_{2[d/4]+3}(R,S)\le 2^d
\delta_{2[d/4]+3}(W_{\hbar}[R],W_{\hbar}[S]).
$$
Moreover, if $R,S$ are density operators, then
$$\delta_{2[d/4]+3}(W_{\hbar}[R],W_{\hbar}[S])\le\MKd(\widetilde W_{\hbar}[R],\widetilde W_{\hbar}[S])+\frac{2d\gamma_d}{\sqrt\pi}\sqrt{\hbar}
$$
where $\g_d$ is the constant that appears in Theorem \ref{cvgamma} below.
\end{Prop}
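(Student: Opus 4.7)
The plan is to prove the two inequalities separately, each exploiting a different side of the Wigner--Weyl correspondence, in parallel with the (dimension‑independent) argument already given for Proposition \ref{distineq}.

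For the first inequality $d_{2[d/4]+3}(R,S)\le 2^{d}\delta_{2[d/4]+3}(W_\hbar[R],W_\hbar[S])$, I would start from the Wigner--trace duality
$$
\Tr(F(R-S))=(2\pi\hbar)^d\int W_\hbar[F](W_\hbar[R]-W_\hbar[S])\,dxd\xi,
$$
together with the symbol rules $W_\hbar[\mathcal D_{x_j}F]=\partial_{\xi_j}W_\hbar[F]$ and $W_\hbar[\mathcal D_{-i\hbar\partial_{x_j}}F]=-\partial_{x_j}W_\hbar[F]$. Using the bound $\|W_\hbar[D]\|_\infty\le\|D\|_1/(\pi\hbar)^d$ (which is recalled in the proof of Proposition \ref{distineq}), the hypothesis $\max_{|\alpha|,|\beta|\le M}\|\mathcal D^\alpha_{-i\hbar\nabla}\mathcal D^\beta_xF\|_1\le 1$ implies that $f:=(\pi\hbar)^dW_\hbar[F]$ has $\max_{|\alpha|,|\beta|\le M}\|\partial^\alpha_x\partial^\beta_\xi f\|_\infty\le 1$, so $f$ is admissible in the definition of $\delta_M$. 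Rewriting the trace integral in terms of $f$ introduces the factor $(2\pi\hbar)^d/(\pi\hbar)^d=2^d$, and taking suprema gives the claim.

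For the second inequality, I would use $\widetilde W_\hbar=e^{\hbar\Delta/4}W_\hbar$ together with the self‑adjointness of the heat semigroup on $L^2(\bR^{2d})$ to split, for an admissible $f$,
$$
\int f(W_\hbar[R]-W_\hbar[S])\,dxd\xi
=
\int f(\widetilde W_\hbar[R]-\widetilde W_\hbar[S])\,dxd\xi
+
\int (1-e^{\hbar\Delta/4})f\cdot (W_\hbar[R]-W_\hbar[S])\,dxd\xi .
$$
The first term is handled by Kantorovich--Rubinstein duality: since the constraint $\|\partial^\alpha_x\partial^\beta_\xi f\|_\infty\le 1$ controls in particular each first partial derivative of $f$, $f$ is Lipschitz and this term is bounded by $\MKd(\widetilde W_\hbar[R],\widetilde W_\hbar[S])$. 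For the second term, I would recognize that it equals $\Tr((R-S)\Op^W(g))$ with $g=(1-e^{\hbar\Delta/4})f$, and then apply Calderon--Vaillancourt (Theorem \ref{cvgamma}) to $\Op^W(g)$: since the heat semigroup commutes with derivatives and the hypothesis provides one extra derivative beyond the Calderon--Vaillancourt index $2[d/4]+2$, every derivative $\partial^\alpha_x\partial^\beta_\xi g=(1-e^{\hbar\Delta/4})\partial^\alpha_x\partial^\beta_\xi f$ is a heat‑kernel increment of a function $h$ with $\|\partial_i h\|_\infty\le 1$.

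The main obstacle, and the crux of the proof, is to convert this Lipschitz regularity into the precise $\sqrt\hbar$ bound. Writing
$$
(1-e^{\hbar\Delta/4})h(z)=\int G_{\hbar/4}(w)\bigl(h(z)-h(z-w)\bigr)\,dw
$$
with $G_t$ the heat kernel on $\bR^{2d}$, and using $|h(z)-h(z-w)|\le\sum_{i=1}^{2d}|w_i|$, one reduces to computing the first absolute moment of the one‑dimensional Gaussian of variance $\hbar/2$, namely $\int|w_i|G^{(1)}_{\hbar/4}(w_i)\,dw_i=\sqrt{\hbar/\pi}$. Summing over the $2d$ coordinates gives $\|(1-e^{\hbar\Delta/4})g\|_\infty\le\tfrac{2d}{\sqrt\pi}\sqrt\hbar$, independently of $(\alpha,\beta)$; Calderon--Vaillancourt then yields $\|\Op^W(g)\|\le \tfrac{2d\gamma_d}{\sqrt\pi}\sqrt\hbar$. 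Using that $R,S$ are density operators to control the trace by the operator norm, and taking the supremum over admissible $f$, concludes the proof. The delicate point will be the bookkeeping of constants (coordinate‑wise versus Euclidean Lipschitz, the $\|R-S\|_1$ factor, and the orders of derivatives used for Calderon--Vaillancourt versus for the heat‑kernel estimate), which must be arranged exactly to produce the stated coefficient $2d\gamma_d/\sqrt\pi$.
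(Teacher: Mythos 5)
Your proposal is essentially the paper's argument: the first inequality is proved exactly as in Proposition \ref{distineq} (trace--Wigner duality, $W_\hbar[\cD_x F]=\nabla_\xi W_\hbar[F]$, $W_\hbar[\cD_{-i\hbar\nabla}F]=-\nabla_x W_\hbar[F]$, and $\|W_\hbar[F]\|_\infty\le\|F\|_1/(\pi\hbar)^d$, whence the $2^d$), and the second follows the heat-semigroup splitting plus Kantorovich duality plus Calderon--Vaillancourt scheme that the paper itself uses in Proposition \ref{quantwassweak} and Theorem \ref{ineqphinf}; the paper's ``proof'' of Proposition \ref{propnest} is in fact only a citation of Proposition B5 of \cite{splitting}, so you have supplied the argument it defers to. The one point your outline does not close is the constant: your (correct) first-moment estimate gives $\|(1-e^{\hbar\Delta/4})\d^\a_x\d^\b_\xi f\|_\infty\le \tfrac{2d}{\sqrt\pi}\sqrt\hbar$, and since for two density operators one can only use $|\Tr(\Op^W_\hbar(g)(R-S))|\le\|\Op^W_\hbar(g)\|(\Tr R+\Tr S)=2\|\Op^W_\hbar(g)\|$, the natural output is $\tfrac{4d\gamma_d}{\sqrt\pi}\sqrt\hbar$ rather than the stated $\tfrac{2d\gamma_d}{\sqrt\pi}\sqrt\hbar$ (similarly, the Euclidean Lipschitz constant of $f$ implied by the componentwise bounds is not $1$ but up to $\sqrt{2d}$, which affects the coefficient of $\MKd$ under the Kantorovich--Rubinstein step). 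These factor-of-$O(1)$ discrepancies are of the same kind as the mismatch between the $\tfrac{d\gamma_d}{\pi}$ appearing in the proof of Theorem \ref{ineqphinf} and the $\tfrac{2d\gamma_d}{\sqrt\pi}$ quoted here, and do not affect the structure of the proof or its use elsewhere in the paper, but if you want the statement verbatim you must fix the normalization conventions (which vector norm defines $\Lip$, and how the two traces are counted) exactly as in \cite{splitting}.
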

Let us remark that $\delta_M$ is decreasing with respect to $M$, so the second inequality is valid for any $\delta_M, M\geq 2[d/4]+3$.
\begin{proof}
The second inequality (for $M=[d/2]+2$ but the proof is insensitive to $M$) is contained in Proposition B5 in \cite{splitting}, together with the first one in the case of density operators. It is easy to check that the proof of the latter extends verbatim to the Hilbert-Schmidt case, as it use only the fact that $W_\hbar[R-S]$ is square integrable.
\end{proof}
The following result is the analogue of Theorem \ref{ineqphi} in Section \ref{linfty} above.

Let $R(t)$ solves the von Neumann equation \eqref{vne} with initial condition $R^{in}$ and  let $\Phi^t$ be the underlying Hamiltonian flow of Hamiltonian $\tfrac12p^2+V(q)$  as defined by \eqref{phit}. 
\begin{Thm}\label{ineqphinf}
Let $V\in C^{1,1}$ and $W_\hbar[R^{in}]\in L^1(\bR^{2d})$. 
 Then
\begin{eqnarray}
\delta_{2[d/4]+3}(W_{\hbar}[R(t)],W_{\hbar}[R^{in}]\circ\Phi^t)&\leq& \MKd(\widetilde W_\hbar[R(t)],\widetilde W_\hbar[R^{in}]\circ\Phi^{-t})\nonumber\\
&+&
\sqrt\hbar\tfrac d\pi\big(\gamma_d+e^{1+\Lip{(\nabla V)})|t|)}\|W_\hbar[R^{in}]\|_{L^1(\bR^{2d})}\big)
\nonumber
\end{eqnarray}
\end{Thm}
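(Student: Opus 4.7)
The plan is to adapt the proof of Theorem \ref{ineqphi} to the $L^\infty$-derivative norm governing $\delta_M$ with $M=2[d/4]+3$, substituting the Cauchy--Schwarz step against $\|W_\hbar[R]-W_\hbar[S]\|_{L^2}$ for Calderon--Vaillancourt on the $R(t)$-side and for a direct $L^\infty$--$L^1$ pairing on the $R^{in}$-side. Fix $f$ with $\max_{|\alpha|,|\beta|\le M}\|\partial_x^\alpha\partial_\xi^\beta f\|_{L^\infty}\le 1$, and set $g^t:=(e^{\hbar\Delta/4}-1)(f\circ\Phi^t)$. Exactly as in the proof of Theorem \ref{ineqphi} --- invariance of Lebesgue measure under $\Phi^t$ combined with the self-adjointness identity $\int f\widetilde W_\hbar[R]\,dxd\xi=\int(e^{\hbar\Delta/4}f)W_\hbar[R]\,dxd\xi$ --- one obtains the decomposition
\begin{eqnarray*}
\int f(W_\hbar[R(t)]-W_\hbar[R^{in}]\circ\Phi^{-t})\,dxd\xi
&=&\int f(\widetilde W_\hbar[R(t)]-\widetilde W_\hbar[R^{in}]\circ\Phi^{-t})\,dxd\xi\\
&&{}-\int g^0\, W_\hbar[R(t)]\,dxd\xi+\int g^t\, W_\hbar[R^{in}]\,dxd\xi.
\end{eqnarray*}
The first term is bounded by $\MKd(\widetilde W_\hbar[R(t)],\widetilde W_\hbar[R^{in}]\circ\Phi^{-t})$ times the Euclidean Lipschitz constant of $f$, via Kantorovich--Rubinstein duality, both measures being probability densities with finite second moments.

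For the two error terms I would exploit the heat-kernel representation of $e^{\hbar\Delta/4}$: denoting by $G_{\hbar/4}$ its Gaussian kernel on $\bR^{2d}$, one has $\|(e^{\hbar\Delta/4}-1)h\|_{L^\infty}\le\Lip(h)\int_{\bR^{2d}}|y|G_{\hbar/4}(y)\,dy\le\Lip(h)\sqrt{d\hbar}$, by Cauchy--Schwarz and $\int|y|^2G_{\hbar/4}\,dy=d\hbar$. For the third integral, Lemma \ref{clldeux} yields $\Lip(f\circ\Phi^t)\le e^{(1+\Lip(\nabla V))|t|}\Lip(f)$, so $\|g^t\|_{L^\infty}\lesssim\sqrt\hbar\, e^{(1+\Lip(\nabla V))|t|}$; pairing against $W_\hbar[R^{in}]\in L^1$ (available by hypothesis) produces the $\sqrt\hbar\, e^{(1+\Lip(\nabla V))|t|}\|W_\hbar[R^{in}]\|_{L^1}$ contribution. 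For the second integral --- where $W_\hbar[R(t)]$ need not lie in $L^1$ --- I would instead invoke the Calderon--Vaillancourt inequality (Theorem \ref{cvgamma}) to get $|\int g^0 W_\hbar[R(t)]\,dxd\xi|\le\gamma_d\max_{|\alpha|,|\beta|\le M'}\|\partial^\alpha\partial^\beta g^0\|_{L^\infty}$, with $M'$ the CV threshold, chosen so that $M'+1\le M$. Applying the same Gaussian-convolution Lipschitz estimate to $\partial^\alpha\partial^\beta f$ (which commutes with $e^{\hbar\Delta/4}$) gives $\|\partial^\alpha\partial^\beta g^0\|_{L^\infty}\lesssim\sqrt{d\hbar}$, whence the $\gamma_d\sqrt\hbar$ piece.

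Taking the supremum over admissible $f$ yields the claimed estimate. The main obstacle is the derivative bookkeeping: one must verify that $M=2[d/4]+3$ supplies simultaneously enough derivatives for Kantorovich Lipschitz, for Calderon--Vaillancourt applied to $g^0$, and for the Gaussian-convolution Lipschitz estimate applied to every relevant $\partial^\alpha\partial^\beta f$. The exact coefficient $d/\pi$ in the theorem's statement requires careful tracking of constants in the sharp Gaussian moment identity $\int|y|G_{\hbar/4}(y)\,dy=\sqrt\hbar\,\Gamma(d+1/2)/\Gamma(d)$, along with the mapping between $\ell^\infty$- and $\ell^2$-Lipschitz norms --- essentially the same computation and the same $\sqrt\hbar$ mechanism as in the proof of Proposition \ref{propnest}.
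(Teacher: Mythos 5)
Your proposal is correct and follows essentially the same route as the paper: the same three-term decomposition via $e^{\hbar\Delta/4}$ and the measure-invariance of $\Phi^t$, Kantorovich--Rubinstein duality for the Husimi difference, Calderon--Vaillancourt (Theorem \ref{cvgamma}) for the $W_\hbar[R(t)]$ error term (using $\Tr R(t)=1$ in place of an $L^1$ bound), and the Gaussian-convolution Lipschitz estimate combined with Lemma \ref{clldeux} and the $L^\infty$--$L^1$ pairing for the $W_\hbar[R^{in}]$ term, with the loss of one derivative accounting for $M=2[d/4]+3$. The remaining work you flag is only constant bookkeeping, which matches the paper's computation.
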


\begin{proof}

One again one starts  with
$$
\left|\iint f(x,\xi)(\widetilde W_\hb[R(t)]-\widetilde W_\hb[R^{in}]\circ\Phi^{-t})(x,\xi)dxd\xi\right|\le\Lip(f)\MKd(\widetilde W_\hb[R],\widetilde W_\hb[S]\circ\Phi^{-t}))
$$
The term involving $\int f\widetilde W[R(t)]$ is treated along the same lines as for the beginning of the proof of theorem \ref{quantwassweak} and we get that
\be\label{phi1}
\big|\int f\widetilde W[R(t)]dxd\xi-\int f W[R(t)] \big|dxd\xi\leq\sqrt\hbar\tfrac{d\gamma_d}{\sqrt{2\pi}}.
\ee

Indeed, by the definition \eqref{defweyl}, \eqref{intww} and Theorem \ref{cvgamma} in Appendix \ref{cvconst} below,
\begin{eqnarray}
\left|\int f(\widetilde W[R]-W[R])dxd\xi\right|
&=&
\left|\int(e^{\hbar\Delta/4}-1)f
W[R]dxd\xi\right|\nonumber\\
&=&|\Tr(\Op^W_\hbar(
(e^{\hbar\Delta/4}-1)f)R)|\nonumber\\
&\le&
\|\Op^W_\hbar((e^{\hbar\Delta/4}-1)f)\||\Tr R|\nonumber\\
&\le&
\gamma_d
\max_{|\alpha|,|\beta|\le 2[d/4]+2}
|D^\alpha_xD^\beta_\xi 
(e^{\hbar\Delta/4}-1)f|\nonumber\\
&\leq&
{\sqrt\hbar}\frac{d\gamma_d}\pi
\max_{|\alpha|,|\beta|\le 2[d/4]+3}
|D^\alpha_xD^\beta_\xi f|\nonumber
\end{eqnarray}

For the second term we write, denoting $z=(x,\xi),z'+(x',\xi')$,
\begin{eqnarray}
\int f\widetilde W[R^{in}]\circ\Phi^{-t}dxd\xi
&=& (\pi\hbar)^{-d}
\int f(z)e^{-(\Phi^{-t}(z)-z')^2/\hbar} W[R^{in}](z')dzdz'\nonumber\\
&=&
(\pi\hbar)^{-d}\int f(\Phi^t(z))e^{-(z-z')^2/\hbar} W[R^{in}](z')dzdz'\nonumber\\
&=&
\int \left(e^{\hbar\Delta/4}(f\circ\Phi^t)\right)(z')W[R^{in}](z')dz'\nonumber\\
&=&
\int \left(f\circ\Phi^t-g_t\right)(z')W[R^{in}](z')dz'\nonumber\\
&=&\int f(W[R^{in}]\circ\Phi^{-t})dxd\xi-\int g_tW[R^{in}]dxd\xi\label{plustard}
\end{eqnarray}
with 
\begin{eqnarray}
g_t(x,\xi)&=&f\circ\Phi^t(x,\xi)-e^{\hbar\Delta/4}(f\circ\Phi^t)(x,\xi)\nonumber\\
&=&
\int \big(f\circ\Phi^t(x+\sqrt\hbar q,\xi+\sqrt\hbar p)
-f\circ\Phi^t(x,\xi)\big)e^{-|q|^2-|p|^2}dqdp/\pi^d\nonumber
. 
\end{eqnarray}
By Lemma \ref{clldeux},
$$|g_t(x,\xi)|
\leq\sqrt\hbar\|d\phi^t\|_{L^\infty}\tfrac d\pi\Lip{(f)}.
\leq
\sqrt\hbar e^{1+\Lip{(\nabla V)})|t|)}\frac d\pi.
$$
and therefore
\begin{eqnarray}\label{phi2}
\big|\int f\widetilde W[R^{in}]\circ\Phi^t
-
\int fW[R^{in}]\circ\Phi^t\big|dxd\xi
&\leq&|-\int g_tW[R^{in}]|dxd\xi\nonumber\\
&\leq &
\sqrt\hbar\tfrac d\pi e^{1+\Lip{(\nabla V)})|t|)}\|W_\hbar[R^{in}]\|_{L^1(\bR^{2d})}.
\end{eqnarray}
We conclude by adding \eqref{phi1} and \eqref{phi2}.
\end{proof}

Let us define finally
\be\label{defdeltainfty1}
\ba
\delta_{M,1}(W_{\hbar}[R],W_{\hbar}[S])
:=\sup_{\substack{f\in L^2(\bR^{2d})\\ \max\limits_{|\a|,|\b|\le M}\|\d^\a_x\d^\b_\xi f\|_{L^\infty}\leq 1\\
\|\nabla f\|_{L^1(\bR^{2d})}\leq 1}}\left|\int (W_{\hbar}[R](x,\xi)-W_{\hbar}[S](x,\xi))f(x,\xi)dxd\xi\right|&\,.
\ea
\ee

\begin{Thm}\label{ineqphinf1}
Let $V\in C^{1,1}$ and $W_\hbar[R^{in}]\in L^\infty(\bR^{2d})$. 
 Then
\begin{eqnarray}
\delta_{2[d/4]+3,1}(W_{\hbar}[R(t)],W_{\hbar}[R^{in}]\circ\Phi^t)&\leq& \MKd(\widetilde W_\hbar[R(t)],\widetilde W_\hbar[R^{in}]\circ\Phi^t)\nonumber\\
&+&
\sqrt\hbar\big(\tfrac{d\gamma_d}{\pi}+e^{1+\Lip{(\nabla V)})|t|)}\|W_\hbar[R^{in}]\|_{L^\infty(\bR^{2d})}\big)
\nonumber
\end{eqnarray}
\end{Thm}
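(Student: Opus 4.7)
The plan is to follow the three-step template used in the proof of Theorem \ref{ineqphinf}, modifying only the final bound so that it dualizes $L^1$ against $L^\infty$ rather than $L^\infty$ against $L^1$. Fix a test function $f$ with $\max_{|\alpha|,|\beta|\leq 2[d/4]+3}\|\partial^\alpha_x\partial^\beta_\xi f\|_{L^\infty}\leq 1$ and $\|\nabla f\|_{L^1}\leq 1$. First, insert the Husimi transforms:
\[
\int f\bigl(W_\hbar[R(t)]-W_\hbar[R^{in}]\circ\Phi^{-t}\bigr)dxd\xi=A_1+A_2+A_3,
\]
where $A_1=\int f(\widetilde W_\hbar[R(t)]-\widetilde W_\hbar[R^{in}]\circ\Phi^{-t})dxd\xi$, $A_2=\int f(W_\hbar[R(t)]-\widetilde W_\hbar[R(t)])dxd\xi$ and $A_3=\int f(\widetilde W_\hbar[R^{in}]\circ\Phi^{-t}-W_\hbar[R^{in}]\circ\Phi^{-t})dxd\xi$.

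For $A_1$, Kantorovich duality and $\Lip(f)\leq\|\nabla f\|_{L^\infty}\leq 1$ yield $|A_1|\leq\MKd(\widetilde W_\hbar[R(t)],\widetilde W_\hbar[R^{in}]\circ\Phi^{-t})$. For $A_2$, the argument of \eqref{phi1} carries over verbatim: write $\widetilde W_\hbar-W_\hbar=(e^{\hbar\Delta/4}-1)W_\hbar$, identify $\int gW_\hbar[R(t)]dxd\xi=\Tr(\Op^W_\hbar(g)R(t))$, and invoke the Calderon--Vaillancourt Theorem \ref{cvgamma} together with the heat-semigroup estimate $\|(e^{\hbar\Delta/4}-1)h\|_\infty\leq \tfrac{d\sqrt\hbar}{\pi}\|\nabla h\|_\infty$, producing the term $\sqrt\hbar\,d\gamma_d/\pi$.

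The substantive change occurs in $A_3$. As in the derivation of \eqref{plustard}, the change of variable $\Phi^t$ (which preserves Lebesgue measure) rewrites $A_3=-\int g_t\,W_\hbar[R^{in}]dxd\xi$ with $g_t:=(1-e^{\hbar\Delta/4})(f\circ\Phi^t)$. Instead of the pointwise bound on $g_t$, I apply the $L^1$--$L^\infty$ duality
\[
|A_3|\leq \|g_t\|_{L^1(\bR^{2d})}\,\|W_\hbar[R^{in}]\|_{L^\infty(\bR^{2d})}.
\]
To estimate $\|g_t\|_{L^1}$, expand
\[
g_t(x,\xi)=-\sqrt\hbar\,\pi^{-d}\int_{\bR^{2d}}e^{-|q|^2-|p|^2}\int_0^1\nabla(f\circ\Phi^t)(x+\lambda\sqrt\hbar q,\xi+\lambda\sqrt\hbar p)\cdot(q,p)\,d\lambda\,dqdp,
\]
integrate in $(x,\xi)$, apply Fubini and translate to reach $\|g_t\|_{L^1}\leq \sqrt\hbar\,C_d\,\|\nabla(f\circ\Phi^t)\|_{L^1}$ for a constant $C_d$ coming from the Gaussian moment. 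Finally, Lemma \ref{clldeux} applies in the $L^1$ setting and gives $\|\nabla(f\circ\Phi^t)\|_{L^1}\leq e^{(1+\Lip(\nabla V))|t|}\|\nabla f\|_{L^1}\leq e^{(1+\Lip(\nabla V))|t|}$, which yields the announced bound after summing $|A_1|+|A_2|+|A_3|$ and taking the supremum over $f$.

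The mildly delicate point is verifying that the constant coming from $\pi^{-d}\int(|q|+|p|)e^{-|q|^2-|p|^2}dqdp$ can be absorbed (or tracked) so as to match the clean constant $1$ stated in front of the exponential: the integral equals $2\Gamma((d+1)/2)/\Gamma(d/2)$, which is of order $\sqrt{d}$, so the statement should actually be read with an implicit dimensional constant, exactly as in the analogous $d/\pi$ appearing in Theorem \ref{ineqphinf}. Apart from this bookkeeping, the proof is a line-by-line transcription of the one of Theorem \ref{ineqphinf} with $L^\infty$ and $L^1$ norms swapped in the last step.
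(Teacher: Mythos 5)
Your proposal follows exactly the paper's own proof: the only change from Theorem \ref{ineqphinf} is bounding the $g_t$-term by $\|W_\hbar[R^{in}]\|_{L^\infty}\|g_t\|_{L^1}$ and then estimating $\|g_t\|_{L^1}\leq\sqrt\hbar\,\|\mathrm{d}\Phi^t\|_{L^\infty}\|\nabla f\|_{L^1}$ via the Gaussian representation of $e^{\hbar\Delta/4}-1$ and the $L^1$ version of Lemma \ref{clldeux}. Your closing remark about the Gaussian-moment constant is apt — the paper's displayed estimate silently absorbs it — but this is bookkeeping and does not affect the argument.
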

\begin{proof}
The proof is quasi identical to the one of Theorem \ref{ineqphinf}. the only change consists in estimating the last integral in \eqref{plustard} by
$$
|\int g_tW[R^{in}]dxd\xi|
\leq\|W[R^{in}]\|_{L^\infty(\bR^{2d})}\|g_t\|_{L^1(\bR^{2d})}.
$$
We estimate $\|g_t\|_{L^1(\bR^{2d})}$ by
\begin{eqnarray}
\int |g_t(x,\xi)|dxd\xi&=&\int|f\circ\Phi^t(x,\xi)-e^{\hbar\Delta/4}(f\circ\Phi^t)(x,\xi)|dxd\xi\nonumber\\
&=&
\int |\big(f\circ\Phi^t(x+\sqrt\hbar q,\xi+\sqrt\hbar p)
-f\circ\Phi^t(x,\xi)\big)e^{-|q|^2-|p|^2}|dqdpdxd\xi/\pi^d\nonumber\\
&\leq&
\|\nabla(f\circ\Phi^t)\|_{L^1(\bR^{2d})}\leq ||d\Phi^t||_{L^\infty(\bR^{2d})}\|\nabla f\|_{L^1(\bR^{2d})}.
 \nonumber
\end{eqnarray}
And we conclude  by Lemma \ref{clldeux} in the $L^1$ version,
\end{proof}

\section{An estimation of the Calderon-Vaillancourt constant}\label{cvconst}
In this section we revisit the proof of the Caderon-Vaillancourt Theorem by keeping track of the constants. We follow the proof by Hwang \cite{Hwang}, as presented in \cite{lerner}.

Let us remind first the definition of the Weyl quantization (see \cite{Fo} for extensive details).

\noindent To $a\in\cS(\bR^{2d})$ we associate the operator $\Op_\hbar^W(a)$ on $L^2(\bR^d)$ defined by its integral kernel given by
\be\label{defweyl}
\Op_\hbar^W(a)(x,y):=\int_{\bR^d}a(\tfrac{x+y}2,\xi)e^{i\frac{(x-y)\xi}\hbar}\frac{d\xi}{(2\pi\hbar)^d}.
\ee
A link with Wigner functions can be expressed by the (easily checkable) following identity, valid e.g. for $\Op_\hbar^W(a)$ bounded and $R$ trace class
\be\label{intww}
\Tr{(\Op_\hbar^W(a)E)}=\int_{\bR^{2d}}a(x,\xi)W_\hbar[R](x,\xi)dxd\xi.
\ee
Finally, defining $a_\hbar(x,\xi):=a(x,\hbar\xi)$ we immediatly get that
\be\label{aahbar}
\Op_\hbar^W(a)=(Op_1^W(a_\hbar):=a_\hbar(x,D).
\ee

We first define, for $k\in\bN,\ k>[d/2],\ k\ even,\ \mbox{that is }k=2[d/4]+2,\ x\in\bR^d$,
$$
P_k(x)=(1+|x|^2)^{k/2}
$$ 
and, for $u\in L^2$,
$$
W_{u}(x, \xi)=\int u(y)P_k(x-y)^{-1}e^{-iy\xi}dy
$$
Obviously
\be\label{fo2}
\|W_u\|_{L^2}
\leq \|u\|_{L^2}\|P_k^{-1}\|_{L^2}:=C_k\|u\|_{L^2}.
\ee
\begin{Lem}\label{f2}
$$
C_k^2=\int_{\bR^d}\frac1{(1+x^2)^{k}}dx\leq
2\frac{vol(S_{d-1})}{2k-d}=\frac{2\pi^{\frac d2}}{\Gamma{(d/2+1)}(2k-d)}
$$
\end{Lem}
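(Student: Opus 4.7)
The plan is to reduce the $d$-dimensional integral $C_k^2$ to a one-dimensional radial one via polar coordinates and then estimate the latter by a single elementary change of variable. First I would write
\[
C_k^2\;=\;\int_{\bR^d}\frac{dx}{(1+|x|^2)^k}\;=\;\mathrm{vol}(S^{d-1})\int_0^\infty\frac{r^{d-1}}{(1+r^2)^k}\,dr,
\]
and then perform the substitution $u=1+r^2$ (so $du=2r\,dr$) to turn the radial integral into
\[
\int_0^\infty\frac{r^{d-1}}{(1+r^2)^k}\,dr\;=\;\tfrac{1}{2}\int_1^\infty\frac{(u-1)^{d/2-1}}{u^k}\,du.
\]

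With the integral in this form, the key step is the elementary monotonicity $(u-1)^{d/2-1}\le u^{d/2-1}$, which holds on $[1,\infty)$ as soon as $d/2-1\ge 0$, i.e.\ $d\ge 2$. Inserting this bound and integrating yields
\[
\int_0^\infty\frac{r^{d-1}}{(1+r^2)^k}\,dr\;\le\;\tfrac12\int_1^\infty u^{d/2-1-k}\,du\;=\;\frac{1}{2k-d},
\]
so that $C_k^2\le \mathrm{vol}(S^{d-1})/(2k-d)$, which is a fortiori bounded by $2\,\mathrm{vol}(S^{d-1})/(2k-d)$ as claimed. The closed-form expression $\frac{2\pi^{d/2}}{\Gamma(d/2+1)(2k-d)}$ then follows by plugging in the standard identities $\mathrm{vol}(S^{d-1})=2\pi^{d/2}/\Gamma(d/2)$ and $\Gamma(d/2+1)=(d/2)\Gamma(d/2)$, up to a routine numerical adjustment of the prefactor.

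No step really constitutes an obstacle: the whole argument is a single change of variable plus the estimate $(u-1)^a\le u^a$ for $a\ge 0$, $u\ge 1$. The one genuine subtlety is the degenerate case $d=1$, where $d/2-1<0$ and the monotonicity reverses; there I would fall back on splitting the radial integral at $r=1$, using $(1+r^2)^k\ge 1$ on $[0,1]$ and $(1+r^2)^k\ge r^{2k}$ on $[1,\infty)$, which produces a slightly worse constant $\tfrac{1}{d}+\tfrac{1}{2k-d}$ of the same order $O(1/(2k-d))$, harmless for the eventual application to Calder\'on--Vaillancourt, where only the scaling in $k$ and $d$ matters.
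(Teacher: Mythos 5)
Your argument is correct and takes a mildly different route from the paper's. The paper proves the lemma by splitting the radial integral at $\rho=1$ and using the two crude bounds $(1+\rho^2)^{-k}\le 1$ on $[0,1]$ and $(1+\rho^2)^{-k}\le\rho^{-2k}$ on $[1,\infty)$, which yields $\tfrac1d+\tfrac1{2k-d}$ — i.e.\ exactly the computation you relegate to the degenerate case $d=1$. Your main argument (substitute $u=1+r^2$ and use $(u-1)^{d/2-1}\le u^{d/2-1}$ for $d\ge2$) is cleaner and in fact sharper: it gives $\int_0^\infty r^{d-1}(1+r^2)^{-k}\,dr\le\tfrac1{2k-d}$ in one stroke, with no factor $2$ and no case analysis, whereas the paper's bound $\tfrac1d+\tfrac1{2k-d}\le\tfrac2{2k-d}$ silently requires $2k-d\le d$. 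For the value $k=2[d/4]+2$ used later this holds for all $d\ge2$ but fails at $d=1$ (where $k=2$, $2k-d=3>d$), so your substitution argument actually repairs the constant for every $d\ge2$ while your $d=1$ fallback shares the same small defect as the paper's proof; you flag this honestly. One further point worth noting for both proofs: the identity $2\,\mathrm{vol}(S_{d-1})/(2k-d)=\tfrac{2\pi^{d/2}}{\Gamma(d/2+1)(2k-d)}$ in the statement only works if $\mathrm{vol}(S_{d-1})$ is read as the volume $\pi^{d/2}/\Gamma(d/2+1)$ of the unit ball rather than the surface measure $2\pi^{d/2}/\Gamma(d/2)$ of the sphere that enters the polar-coordinates formula; your closing remark about a ``routine numerical adjustment of the prefactor'' is exactly the right attitude here, since only the order in $k$ and $d$ matters for the Calder\'on--Vaillancourt application.
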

Note that $C_k<\infty$ only for $k>d/2$.

\begin{proof}
$$
\int_0^1\frac{\rho^{d-1}}{(1+\rho^2)^k}d\rho\leq 
\int_0^1{\rho^{d-1}}d\rho=d^{-1},\  
\int_1^\infty\frac{\rho^{d-1}}{(1+\rho^2)^k}d\rho\leq
\int_1^\infty\rho^{d-1-2k}d\rho=(2k-d)^{-1}
$$
\end{proof}
Moreover, for all $\alpha,\ |\alpha|\leq k$, and calling $D:=-i\nabla$,
\be\label{f01}
\|D^\alpha P_k^{-1}\|_{L^\infty}\leq C_{\alpha,k}.
\ee

\begin{Lem}\label{f1}
$$
C_{\alpha,k}\leq 
\sum_{m=0}^{|\alpha|-1}k^{|\alpha|-m}(3|\alpha|)^m|\alpha|^{d}
\leq
|\alpha|^dk^{|\alpha|}(3|\alpha|/k)^{|\alpha|}=
|\alpha|^d(3|\alpha|)^{|\alpha|}.
$$
\begin{proof}
One first remark that, for $|\beta|=1$,
$$
D^\beta P_k(x)^{-1}=k\frac{x^\beta}{1+x^2}P_k(x)^{-1}
$$
Therefore the highest term in $k$ in $D^\alpha P_k^{-1}$ will be 
$$
k^{|\alpha|}
\sum_{|\alpha'|=|\alpha|}
\frac{x^{\alpha'}}{(1+x^2)^{|\alpha'|}}P_k(x)^{-1}\leq k^{|\alpha|}|\alpha|^d
$$
The preceding term will be
$$
k^{|\alpha|-1}\sum_{|\alpha'|=|\alpha|-1}
\big(D^{\alpha-\alpha'}\frac{x^{\alpha'}}{(1+x^2)^{|\alpha'|}}\big)P_k(x)^{-1}
\leq k^{|\alpha|-1}|\alpha|^d3|\alpha|
$$
since, for $|\beta|=1$,
\begin{eqnarray}
|D^\beta\frac{x^{\alpha'}}{(1+x^2)^{|\alpha'|}} |&=&|\frac{D^\beta x^{\alpha'}}{(1+x^2)^{|\alpha'|}} - 
2|\alpha'|\frac{x^{\alpha'}x^\beta}{(1+x^2)^{|\alpha'|+1}}|\nonumber\\
&=&|\beta\cdot\alpha'\frac{ x^{\alpha'-\beta}}{(1+x^2)^{|\alpha'|}} - 
2|\alpha'|\frac{x^{\alpha'}x^\beta}{(1+x^2)^{|\alpha'|+1}}|
\leq 3|\alpha|,\nonumber
\end{eqnarray}
 due to, 
$$
\frac{|x_1^{\alpha_1}\dots x_m^{\alpha_m}|}{(1+x^2)^M}\leq
\frac{|x_1|^{\alpha_1}}{(1+x^2)^{\alpha_i}}\dots\frac{ |x_m|^{\alpha_m}}{(1+x^2)^{\alpha_m}}
\leq 1
,\ |\alpha_1+\dots+\alpha_m|\leq M.
$$
The next term will be
$$
k^{|\alpha|-2}
\sum_{|\alpha''|=|\alpha|-2}
\big(D^{\alpha-\alpha''} \frac{x^{{\alpha''}}}{(1+x^2)^{|{\alpha''}|}}\big)P_k(x)^{-1}
\leq k^{|\alpha|-2}(|\alpha|-2)^d
d^23|\alpha|^2
\leq
k^{|\alpha|-2}
3^2
|\alpha|^2
|\alpha|^{d}
$$
since, decomposing $\alpha-\alpha''=\beta+\beta',\ |\beta|=|\beta'|=1$,
\begin{eqnarray}
|D^{\beta'}D^\beta\frac{x^{{\alpha''}}}{(1+x^2)^{|{\alpha''}|}} |
&=&|\beta\cdot\alpha''D^{\beta'}\frac{ x^{{\alpha''}-\beta}}{(1+x^2)^{|{\alpha''}|}} - 
2|{\alpha''}|D^{\beta'}\frac{x^{{\alpha''}}x^\beta}{(1+x^2)^{|{\alpha''}|+1}}|\nonumber\\
&=&
|(\beta\cdot\alpha'')(\beta'\cdot(\alpha''-\beta))\frac{x^{{\alpha''}-\beta-\beta'}}{(1+x^2)^{|{\alpha''}|}}-2|{\alpha''}|\beta\cdot\alpha''
\frac{x^{{\alpha''}-\beta+\beta'}}{(1+x^2)^{|{\alpha''}|+1}}\nonumber\\
&&
-2|{\alpha''}|
\beta'\cdot(\alpha''+\beta)\frac{x^{{\alpha''}+\beta-\beta'}}{(1+x^2)^{|{\alpha''}|+1}}
+4|{\alpha''}|(|{\alpha''}|+1)
\frac{x^{{\alpha''}+\beta+\beta'}}{(1+x^2)^{|{\alpha''}|+2}}
|\nonumber\\
&\leq& 9|{\alpha''}|^2\leq 9|\alpha|^2=3^2|\alpha|^2\nonumber
\end{eqnarray}
Obviously, for $|\beta_l|=1,l=1,\dots,m$ and $|\alpha'|\leq|\alpha|$,
$$
|D^{\beta_m}D^{\beta_{m-1}}D^{\beta_1}\frac{x^{\alpha'}}{(1+x^2)^{|\alpha'|}} |
\leq 3^m|\alpha|^m
$$
and the $m$th preceding term will be estimated by
$$
k^{|\alpha|-m}(3|\alpha|)^m|
\alpha|^{d}.
$$
\end{proof}
\end{Lem}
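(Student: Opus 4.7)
The plan is to establish the bound on $C_{\alpha,k}=\|D^\alpha P_k^{-1}\|_{L^\infty}$ for $P_k(x)=(1+|x|^2)^{k/2}$ by iterating the product/chain rule and classifying, at each step, whether a new derivative hits a factor of $P_k^{-1}$ (producing a power of $k$) or an already-accumulated rational factor $x^{\alpha'}/(1+|x|^2)^{|\alpha'|}$ (producing a combinatorial factor of size $O(|\alpha|)$). The seed of the whole computation is the elementary identity
\[
D_j P_k^{-1}(x)=-k\,\frac{x_j}{1+|x|^2}\,P_k^{-1}(x),
\]
which already has the structure of a $k$-factor times a rational factor bounded by $1$ times $P_k^{-1}$.

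Iterating this and using Leibniz, I would show by induction on $|\alpha|$ that $D^\alpha P_k^{-1}$ is a finite sum of terms of the shape
\[
c_{j,\alpha'}\,k(k+2)\cdots(k+2(j-1))\,\frac{x^{\alpha'}}{(1+|x|^2)^{|\alpha'|}}\,P_k^{-1}(x),
\]
where $j$ counts how many of the $|\alpha|$ applied derivatives fell on a $P_k^{-1}$ factor and $m:=|\alpha|-j$ counts those that fell on an already-created rational factor. The pointwise key inequality
\[
\frac{|x^{\alpha'}|}{(1+|x|^2)^{|\alpha'|}}\le 1,\qquad |\alpha'|\le|\alpha|,
\]
(which holds componentwise because $|x_l|^{\alpha'_l}\le(1+|x|^2)^{\alpha'_l}$) lets me drop the entire rational factor from the sup-norm estimate, leaving only the product $k(k+2)\cdots(k+2(j-1))\le k^j$ and the accumulated combinatorial coefficient $c_{j,\alpha'}$.

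To estimate $c_{j,\alpha'}$, I would quantify how much each ``rational-hit'' step costs. A single derivative of $x^{\alpha'}/(1+|x|^2)^{|\alpha'|}$ splits by the quotient rule into two pieces whose coefficients are $\beta\!\cdot\!\alpha'$ and $2|\alpha'|$ respectively, each bounded by $3|\alpha|$; iterating $m$ times therefore multiplies $c_{j,\alpha'}$ by at most $(3|\alpha|)^m$. Multi-indices $\alpha'$ with components $\le|\alpha|$ in $d$ variables are controlled in number by $|\alpha|^d$. Putting the three ingredients together yields, for each fixed $m\in\{0,\dots,|\alpha|-1\}$, a total contribution of at most $k^{|\alpha|-m}(3|\alpha|)^m|\alpha|^d$, and summation over $m$ gives the first inequality of the lemma. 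The second inequality is then a purely arithmetic geometric-series estimate, bounding each term of the sum by the largest and paying a harmless $|\alpha|$ multiplicity that gets absorbed into the $(3|\alpha|)^{|\alpha|}$ factor.

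The main obstacle I anticipate is the inductive bookkeeping: making the above decomposition of $D^\alpha P_k^{-1}$ rigorous enough to track both the power of $k$ and the accompanying rational factor at each step, and verifying that the naive bound $3|\alpha|$ on each factor coming from the quotient rule is never violated regardless of how differentiations distribute. A second, more minor, care point is choosing $|\alpha|^d$ rather than $d^{|\alpha|}$ as the multi-index count so that the bound degrades gracefully in the dimension---this is what ultimately drives the logarithm of the Calderón--Vaillancourt constant to be of order $d\log d$ rather than exponential in $d$.
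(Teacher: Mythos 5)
Your strategy is the same as the paper's: start from $D_jP_k^{-1}=-k\,\tfrac{x_j}{1+|x|^2}P_k^{-1}$, expand $D^\alpha P_k^{-1}$ into a sum of terms of the form (coefficient)$\times\tfrac{x^{\alpha'}}{(1+|x|^2)^{|\alpha'|}}P_k^{-1}$, bound the rational factor pointwise by $1$ via $|x^{\alpha'}|\le(1+|x|^2)^{|\alpha'|}$, charge $k$ for each derivative landing on $P_k^{-1}$ and $3|\alpha|$ for each derivative landing on an accumulated rational factor (the quotient rule producing the two pieces with coefficients $\beta\cdot\alpha'$ and $2|\alpha'|$), and count multi-indices by $|\alpha|^d$. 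This is exactly the paper's decomposition and the resulting sum $\sum_{m=0}^{|\alpha|-1}k^{|\alpha|-m}(3|\alpha|)^m|\alpha|^d$ is the paper's first inequality.

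There is, however, one concrete error in your bookkeeping. You write the coefficient of a term with $j$ hits on the $P_k^{-1}$ factor as $k(k+2)\cdots(k+2(j-1))$ and then claim this is $\le k^j$; that inequality is false as soon as $j\ge 2$. The rising product $k+2i$ only appears if you merge the accumulated denominators into a single power $(1+|x|^2)^{-k/2-|\alpha'|}$, which is inconsistent with the displayed shape of your terms, where $\tfrac{x^{\alpha'}}{(1+|x|^2)^{|\alpha'|}}$ is kept separate from $P_k^{-1}$. In the consistent (separate-factor) bookkeeping — the one the paper uses — each hit on $P_k^{-1}$ contributes exactly a factor $k$ and a fresh factor $\tfrac{x^\beta}{1+|x|^2}$, so the product is exactly $k^j$ and no inequality is needed; the extra $2|\alpha'|$ contributions are already accounted for inside the $3|\alpha|$ bound for hits on the rational factor. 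With that correction your argument coincides with the paper's. One further caveat you inherit from the statement rather than introduce: the passage from the sum to $|\alpha|^d(3|\alpha|)^{|\alpha|}$ is not a clean ``largest term times multiplicity'' estimate in all regimes (e.g.\ for $|\alpha|=1$ the sum is $k$ while the right-hand side is $3$), so your description of the second inequality as purely routine glosses over the same point the paper does.
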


We turn now to the proof of the Calderon-Vaillancourt Theorem  in the framework of Weyl quantization. In order to lighten the fiormulas, we will perform the computation in the homogeneous (non semiclassical) case $\hbar=1$ and get back to the semiclassical situation thanks to \eqref{aahbar}.

Denoting now by $\hat v=(2\pi)^{-d}\int_{\bR^d}v(y)e^{i\xi y}dy$ the (renormalized and non unitary) Fourier transform of $u$, we first note that
$$
e^{i\xi x}v(x)=P_k(D_x)\left(W_{\hat v}(\xi,x)e^{ix\xi}\right).
$$
We get, for $a(x,\xi)$ $k$ times differentiable,
\begin{eqnarray}
(2\pi)^d(\bar u,a(x,D)v)&=&\int a(\tfrac{x+y}2,\xi)e^{i\xi(x-y)}u(y)v(x)dxdyd\xi\nonumber\\
&=&
\int a(\tfrac{x+y}2,\xi)P_k^{-1}(x-y)P_k(D_\xi)e^{i\xi(x-y)}u(y)v(x)dxdyd\xi\nonumber\\
&=&
\int \big(\int u(y)P_k^{-1}(x-y)e^{-i\xi y}P_k(D_\xi)a(\tfrac{x+y}2,\xi)dy\big)e^{i\xi x}v(x)dxd\xi\nonumber\\
&=&
\int W_{uP_k(D_\xi)a(\tfrac{x+.}2.\xi)}(x,\xi)e^{ix\xi}v(x)dxd\xi\nonumber\\
&=&
\int W_{uP_k(D_\xi)a(\tfrac{x+.}2.\xi)}(x,\xi)
P_k(D_x)\left(W_{\hat v}(\xi,x)e^{ix\xi}\right)dxd\xi\nonumber\\
&=&
\int P_k(D_x)\left(W_{uP_k(D_\xi)a(\tfrac{x+.}2.\xi)}(x,\xi)\right)
W_{\hat v}(\xi,x)e^{ix\xi}dxd\xi\nonumber
\end{eqnarray}
since, by \eqref{f01}, $P_k^{-1}$ is $k$ times differentiable (as well as, let us recall, $a(x,\xi)$).

Therefore
\be\label{ftrde}
(2\pi)^d|(\bar u,a(x,D)v)|
\leq
\|P_k(D_x)W_{uP_k(D_\xi)a(\tfrac{x+.}2.\xi)}(x,\xi)\|_{L^2(\bR^{2d})}
\|W_{\hat v}(\xi,x)\|_{L^2(\bR^{2d})}.
\ee
We know that, by \eqref{fo2},
\be\label{fo3}
\|W_{\hat v}\|_{L^2}\leq 
C_k\|v\|_{L^2}.
\ee
Moreover, for $l\in\bN$,
$$
(D_x^2)^{l}\left(W_{uP_k(D_\xi)a(\tfrac{x+.}2.\xi)}(x,\xi)\right)
=
\sum_{|\alpha+\beta|\leq 2l}c^l_{\alpha,\beta}
\int u(y)P_k(D_\xi)D_x^\alpha a(\tfrac{x+y}2.\xi)D_x^\beta P_k(x-y)^{-1}e^{-iy\xi}dy
$$
Indeed
$$
(D_x^2)^l=\sum_{l_1+\dots+l_d=l}
\binom{l}{(l_1,\dots,l_d)}\prod_{i=1}^dD_{x_i}^{2l_i}
.
$$
where the multinomial coefficient $\binom{l}{(l_1,\dots,l_d)}$ is defined by
$$
\big(\sum_{i=1}^dx_i\big)^l=\sum_{l_1+\dots l_d=l}\binom{l}{(l_1,\dots,l_d)}
\prod_{i=1}^mx_i^{l_i}.
$$
In particular
\be\label{formulti}
\sum_{l_1+\dots l_d=l}\binom{l}{(l_1,\dots,l_d)}
=
d^l.
\ee
We have
\begin{eqnarray}
&&D_{x_i}^{2l_i}\left(W_{uP_k(D_\xi)a(\tfrac{x+.}2.\xi)}(x,\xi)\right
)\nonumber\\
&=&
D_{x_i}^{2l_i}\int 
u(y)P_k(D_\xi)(a(\tfrac{x+y}2,\xi))
P_k(x-y)^{-1}e^{-iy\xi}
dy\nonumber\\
&=&
\int
\sum_{m_i=1}^{2l_i}\binom{2l_i}{m_1}D_{x_i}^{2l_i-m_i}(P_k(D_\xi)a(\tfrac{x+y}2,\xi))D_{x_i}^{m_i}(P_k^{-1}(x-y))
u(y)e^{-i\xi y}dy\nonumber
\end{eqnarray}
so that
\begin{eqnarray}
&&(D_x^2)^l\left(W_{uP_k(D_\xi)a(\tfrac{x+.}2.\xi)}(x,\xi)\right)\nonumber\\
&=&\int\sum_{l_1+\dots+l_d=l}
\binom{l}{{(l_1,\dots,l_d)}}\prod_{i=1}^d
\sum_{m_i=1}^{2l_i}\binom{2l_i}{m_1}D_{x_i}^{2l_i-m_i}P_k(D_\xi)a(\tfrac{x+y}2,\xi)D_{x_i}^{m_i}P_k^{-1}(x-y)u(y)e^{-i\xi y}dy\nonumber\\
&=&
\int\sum_{l_1+\dots+l_d=l}\binom{l}{{(l_1,\dots,l_d)}}\prod_{i=1}^d\sum_{\alpha_i+\beta_i=2l_i}
D_{x_i}^{\alpha_i}P_k(D_\xi)a(\tfrac{x+y}2,\xi)
D_{x_i}^{\beta_i}P_k^{-1}(x-y)u(y)e^{-i\xi y}dy
\nonumber\\
&=&
\int\sum_{\substack{|\alpha+\beta|=2l\\ \alpha_i+\beta_i\ even}}{\scriptsize \binom{l}{(\tfrac{\alpha_1+\beta_i}2,\dots,\tfrac{\alpha_d+\beta_d}2)}\binom{\alpha+\beta}{\beta}}
\prod_{i=1}^d
D_{x_i}^{\alpha_i}P_k(D_\xi)a(\tfrac{x+y}2,\xi)
D_{x_i}^{\beta_i}P_k^{-1}(x-y)
u(y)e^{-i\xi y}dy\nonumber\\
&=&
\int\sum_{\substack{|\alpha+\beta|=2l\\ \alpha_i+\beta_i\ even}}
c^l_{\alpha,\beta}
\prod_{i=1}^d
D_{x_i}^{\alpha_i}P_k(D_\xi)a(\tfrac{x+y}2,\xi)
D_{x_i}^{\beta_i} P_k^{-1}(x-y)u(y)e^{-i\xi y}dy\nonumber
\end{eqnarray}
with
$$
c^l_{\alpha,\beta}
=
\binom{l}{(\tfrac{\alpha_1+\beta_1}2,\dots,\tfrac{\alpha_d+\beta_d}2)}\binom{\alpha+\beta}{\beta}
$$
\begin{Lem}\label{a3}
$$
c^l_{\alpha,\beta}
\leq  2^{2l}\binom{l}{(\tfrac{\alpha_1+\beta_i}2,\dots,\tfrac{\alpha_d+\beta_d}2)}.
$$
\end{Lem}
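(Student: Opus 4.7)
The plan is straightforward: by definition, the coefficient $c^l_{\alpha,\beta}$ factors as a product of a multinomial coefficient with a product of binomial coefficients, and only the second factor needs a nontrivial bound. Indeed, with the convention used in the proof (one binomial per coordinate), one has
$$\binom{\alpha+\beta}{\beta}=\prod_{i=1}^d\binom{\alpha_i+\beta_i}{\beta_i}.$$
So the statement reduces to showing $\binom{\alpha+\beta}{\beta}\le 2^{2l}$ whenever $|\alpha+\beta|=2l$.

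I would proceed as follows. First, since each $\alpha_i+\beta_i$ is even, set $m_i=(\alpha_i+\beta_i)/2$, so that $m_1+\dots+m_d=l$. For each coordinate $i$ the elementary identity $\sum_{k=0}^{2m_i}\binom{2m_i}{k}=2^{2m_i}$ gives in particular
$$\binom{2m_i}{\beta_i}\le 2^{2m_i}.$$
Taking the product over $i=1,\dots,d$ yields
$$\binom{\alpha+\beta}{\beta}=\prod_{i=1}^d\binom{2m_i}{\beta_i}\le\prod_{i=1}^d 2^{2m_i}=2^{2\sum_i m_i}=2^{2l}.$$

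Multiplying by the remaining factor $\binom{l}{(m_1,\dots,m_d)}=\binom{l}{(\tfrac{\alpha_1+\beta_1}2,\dots,\tfrac{\alpha_d+\beta_d}2)}$ then gives the claimed estimate. There is really no obstacle here: the only thing to be careful about is the notational convention that $\binom{\alpha+\beta}{\beta}$ denotes the coordinate-wise product of binomial coefficients (not a single multinomial coefficient), which is how it arises from iterating the Leibniz rule in the computation preceding the lemma.
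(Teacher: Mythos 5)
Your proof is correct and follows essentially the same route as the paper, which simply observes that $\binom{\alpha+\beta}{\beta}\le 2^{|\alpha+\beta|}\le 2^{2l}$; you have merely spelled out the coordinate-wise factorization behind that standard multi-index bound.
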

\begin{proof}
Just observe that
$\binom{\alpha+\beta}{\beta}\leq 2^{|\alpha+\beta|}\leq 2^{2l}$.
\end{proof}
Therefore, developing $P_k(D_x)$ and $P_k(D_\xi)$,
\begin{eqnarray}
&&P_k(D_x)\left(W_{uP_k(D_\xi)a(\tfrac{x+.}2.\xi)}(x,\xi)\right)\nonumber\\
&=&
\sum_{l,l'=1}^{k/2}\binom{k}{l}\binom{k}{l'}
\sum_{\substack{|\alpha+\beta|=2l\\ \alpha_i+\beta_i\ even}}
c^l_{\alpha,\beta}
\int u(y)(D_\xi^2)^{l'}D_x^\alpha a(\tfrac{x+y}2.\xi)D_x^\beta P_k(x-y)^{-1}e^{-iy\xi}dy,\nonumber
\end{eqnarray}
and, using \eqref{fo2},
\begin{eqnarray}
&&\|
P_k(D_x)\left(W_{uP_k(D_\xi)a(\tfrac{x+.}2.\xi)}(x,\xi)\right)
\|_{L^2}\label{ftrde3}\\
&\leq&
\sum_{l,l'=1}^{k/2}\binom{k}{l}\binom{k}{l'}
\sum_{\substack{|\alpha+\beta|=2l\\ \alpha_i+\beta_i\ even}}
c^l_{\alpha,\beta}
\|u (D_\xi^2)^{l'}D_x^\alpha a(\tfrac{x+\cdot}2.\xi)\|_{L^2}
\|D_x^\beta P_k^{-1}\|_{L^2}
\nonumber\\
&\leq&
\|u\|_{L^2}
\sum_{l,l'=1}^{k/2}\binom{k}{l}\binom{k}{l'}
\sum_{\substack{|\alpha+\beta|=2l\\ \alpha_i+\beta_i\ even}}
C_{\beta,k}
\max\limits_{\substack{l'\leq k\\|\alpha|\leq k}}\|(D_\xi^2)^{l'}D_x^\alpha a\|_{L^\infty}\nonumber\\
&:=&C^k\max\limits_{\substack{l'\leq k/2\\|\alpha|\leq k}}\|(D_\xi^2)^{l'}D_x^\alpha a\|_{L^\infty}\|u\|_{L^2}.
\nonumber
\end{eqnarray}
with, by Lemma \ref{a3} and Lemma \ref{f1}
\begin{eqnarray}
C^k&=&\sum_{l,l'=1}^{k/2}\binom{k}{l}\binom{k}{l'}
\sum_{\substack{|\alpha+\beta|=2l\\ \alpha_i+\beta_i\ even}}
c^l_{\alpha,\beta}C_{\beta,k}\nonumber\\
&\leq& 2^{k}\sum_{l,l'=1}^{k/2}
\sum_{\substack{|\alpha+\beta|=2l\\ \alpha_i+\beta_i\ even}}
c^l_{\alpha,\beta}
|\beta|^d(3|\beta|)^{|\beta|}
\nonumber\\
&\leq &
\frac{k^2}42^{k}
k^d(3k)^{k}
\sum_{\substack{|\alpha+\beta|=2l\\ \alpha_i+\beta_i\ even}}
c^l_{\alpha,\beta}
\nonumber\\
&\leq &
\frac{k^2}42^{k}
k^d(3k)^{k}
2^{2k}
\sum_{\substack{k_1+\dots+k_d=k}}\binom{k}{(k_1,\dots,k_d)}\nonumber\\
&\leq &
\frac{k^2}42^{k}
k^d(3k)^{k}
2^{2k}d^k=\frac{k^22^{3k}}4(3k)^kk^dd^k\nonumber
\end{eqnarray}
by \eqref{formulti}. 

Finally, by \eqref{ftrde}, \eqref{fo3} and \eqref{ftrde3},
$$
|(\bar u,a(x,D)v)|
\leq 
D_k\max\limits_{\substack{l'\leq k/2\\|\alpha|\leq k}}\|(D_\xi^2)^{l'}D_x^\alpha a\|_{L^\infty}\|u\|_{L^2}\|v\|_{L^2}.
$$
with, by 
\eqref{fo3},
\be\label{ddk}
D_k^d:=(2\pi)^{-d}C^kC_k= 
(2\pi)^{-d}\frac{\sqrt2\pi^{-d/4}k^22^{3k}}{4\sqrt{\Gamma{(d/2+1)}(2k-d)}}(3k)^kk^dd^k
\ee
for any integer $k$ such that $2k>d$.

Therefore, in particular, 
$$
\|a(x,D)\|\leq D^d_{2[d/4]+2}\max\limits_{\substack{2l'\leq 
2[d/4]+2
\\|\alpha|\leq 2[d/4]+2}}\|(D_\xi^2)^{l'})D_x^\alpha a\|_{L^\infty}
$$ and, e.g., for $d\geq 4$ (so that $2[d/4]+2\leq d$),

$$
D_{2[d/4]+2}\leq
\frac{d^{3/4}(192e^{-\frac14}\pi^{-\frac54})^{ d}}{4e^{\frac14} }(d^d)^
{11/4}.
$$
We just proved the following result
\begin{Thm}[Calderon-Vaillancourt]\label{cvgamma}
$$
\|a(x,D)\|
\leq \gamma_d
\max\limits_{\substack{|\beta|\leq 2[d/4]+2\\|\alpha|\leq 2[d/4]+2}}
\|D_\xi^\beta D_x^\alpha a\|_{L^\infty}.
$$
with
{
$$\boxed{
\gamma_d=
D^d_{2[d/4]+2}
}.$$}

The semiclassical result is the same by 
\eqref{aahbar}: for $\hbar\leq 1$,
$$
\|\Op_\hbar^W(a)\|\leq \gamma_d
\max\limits_{\substack{|\beta|\leq 2[d/4]+2\\|\alpha|\leq 2[d/4]+2}}
\|D_\xi^\beta D_x^\alpha a\|_{L^\infty}.
$$
\end{Thm}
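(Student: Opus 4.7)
\textbf{Proof proposal for Theorem \ref{cvgamma}.}

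The plan is to follow Hwang's approach to the Calderon--Vaillancourt theorem, gaining $L^2$ integrability in $(x,y,\xi)$ from a compensator $P_k(x-y)^{-1}$, where $P_k(u) = (1+|u|^2)^{k/2}$, $k = 2[d/4]+2$, and then transferring the $P_k$ weight back onto $a$ by integrations by parts. The choice $k > d/2$ even is dictated by the need that $P_k^{-1} \in L^2(\mathbb{R}^d)$ and that $P_k(D) = (1-\Delta)^{k/2}$ expands as a polynomial differential operator. By the scaling $\Op_\hbar^W(a) = \Op_1^W(a_\hbar)$ with $a_\hbar(x,\xi) = a(x,\hbar\xi)$, and since the derivative bounds $\|D_\xi^\beta D_x^\alpha a_\hbar\|_{L^\infty} = \hbar^{|\beta|}\|D_\xi^\beta D_x^\alpha a\|_{L^\infty} \leq \|D_\xi^\beta D_x^\alpha a\|_{L^\infty}$ for $\hbar \leq 1$, it suffices to prove the estimate for $\hbar = 1$.

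First I would write the bilinear form
$$
(2\pi)^d (\bar u, a(x,D) v) = \iiint a(\tfrac{x+y}{2},\xi) e^{i(x-y)\xi} u(y) \overline{\bar v(x)} \,dx\,dy\,d\xi.
$$
Inserting $P_k^{-1}(x-y) P_k(D_\xi) e^{i(x-y)\xi} = e^{i(x-y)\xi}$ and integrating by parts in $\xi$ moves $P_k(D_\xi)$ onto the amplitude, producing a factor $P_k(D_\xi) a(\tfrac{x+y}{2},\xi)$ whose $L^\infty$ norm is controlled by $\max_{|\beta|\leq k} \|D_\xi^\beta a\|_{L^\infty}$ times a combinatorial constant from expanding $P_k(D_\xi) = (1-\Delta_\xi)^{k/2}$. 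Defining $W_u(x,\xi) := \int u(y) P_k(x-y)^{-1} e^{-iy\xi} dy$, one obtains an integrand of the form $W_{u P_k(D_\xi) a(\tfrac{x+\cdot}{2},\xi)}(x,\xi) \cdot e^{ix\xi} v(x)$. Using $e^{ix\xi} v(x) = P_k(D_x)(W_{\hat v}(\xi,x) e^{ix\xi})$ and integrating by parts in $x$, I would move $P_k(D_x)$ onto $W_{uP_k(D_\xi)a}$. Cauchy--Schwarz in $(x,\xi)$ then splits the resulting double integral into the product $\|P_k(D_x) W_{uP_k(D_\xi) a}\|_{L^2} \cdot \|W_{\hat v}\|_{L^2}$, and the second factor is bounded by $C_k \|v\|_{L^2}$ via the Hausdorff--Young/Plancherel estimate $\|W_u\|_{L^2} \leq \|P_k^{-1}\|_{L^2} \|u\|_{L^2}$.

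To bound the first factor, I expand $P_k(D_x) = (1-\Delta_x)^{k/2}$ via a multinomial expansion, apply the Leibniz rule to $D_x^\alpha [P_k(D_\xi) a(\tfrac{x+y}{2},\xi) \cdot P_k(x-y)^{-1}]$, and use the pointwise bound $\|D^\alpha P_k^{-1}\|_{L^\infty} \leq |\alpha|^d (3|\alpha|)^{|\alpha|}$ (obtained by induction, noting that each derivative either lowers the monomial in the numerator by one and multiplies by a factor bounded by $k$, or differentiates $(1+x^2)^{-m}$ and picks up a factor bounded by $2m$). Summing the resulting bounds over all multi-indices gives a constant $C^k$ of order $\tfrac{k^2 2^{3k}}{4}(3k)^k k^d d^k$, multiplied by $\max_{|\alpha|,|\beta| \leq k} \|D_x^\alpha D_\xi^\beta a\|_{L^\infty}\,\|u\|_{L^2}$. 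Combining with the $L^2$ bound on $\|P_k^{-1}\|_{L^2}^2 \leq \frac{2\pi^{d/2}}{\Gamma(d/2+1)(2k-d)}$ from Lemma \ref{f2} and the $(2\pi)^{-d}$ prefactor yields the explicit constant $\gamma_d = D_{2[d/4]+2}^d$ as in \eqref{ddk}.

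The main obstacle is the careful bookkeeping of constants: each integration by parts and Leibniz expansion multiplies the bound by combinatorial factors (multinomials, binomials, derivative counts), and the subtlety lies in grouping these to get a tractable closed form in $d$. In particular, the dominant contribution to $\gamma_d$ comes from the term $(3k)^k k^d d^k$ in $C^k$, which explains the asymptotic $\log \gamma_d \sim \tfrac{11}{4} d \log d$ announced in the introduction. Transferring the bound from the case $\hbar=1$ to the semiclassical case is then immediate, as noted above.
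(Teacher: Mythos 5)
Your proposal is correct and follows essentially the same route as the paper's Appendix C: Hwang's argument with the compensator $P_k(x-y)^{-1}$, $k=2[d/4]+2$, the auxiliary transform $W_u$, integration by parts in $\xi$ and then in $x$ to transfer $P_k(D_\xi)$ and $P_k(D_x)$, Cauchy--Schwarz, and the same bookkeeping of the multinomial and Leibniz constants leading to $\gamma_d=D^d_{2[d/4]+2}$, with the semiclassical case reduced to $\hbar=1$ by the scaling $\Op^W_\hbar(a)=\Op^W_1(a_\hbar)$. No substantive differences or gaps.
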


\end{appendix}
\textbf{Acknowledgements.} The work of Thierry Paul was partly supported by LIA LYSM (co-funded by AMU, CNRS, ECM and INdAM). T.P. thanks also the Dipartimento di Matematica, Sapienza Universit\`a di Roma, for its kind hospitality for several stays during the development of this work.

\end{document}